\definecolor{pink}{rgb}{1,.2,.6}
\definecolor{orange}{rgb}{0.7,0.3,0}
\definecolor{blue}{rgb}{.2,.6,.75}
\definecolor{green}{rgb}{.4,.7,.4}
\newcommand{\kommentar}[1]
\newcommand{\Z}{\mathbb Z}
\newcommand{\N}{\mathbb N}
\newcommand{\Q}{\mathbb Q}
\newcommand{\F}{\mathbb F}
\newcommand{\C}{\mathbb C}
\renewcommand{\L}{\mathcal L}
\newcommand{\dee}{\mathrm{d}}
\newcommand{\J}{\mathcal J}
\newcommand{\G}{G_{m, k}}
\DeclareMathOperator{\Aut}{Aut}
\DeclareMathOperator{\GL}{GL}
\DeclareMathOperator{\SL}{SL}
\DeclareMathOperator{\tr}{tr}
\DeclareMathOperator{\Mat}{M}
\newcommand{\isom}{\cong}
\newcommand{\leg}[2]{\left(\frac{#1}{#2}\right)}
\newcommand{\eq}[2]{ \begin{equation} \label{#1}\begin{split} #2 \end{split} \end{equation} }
\newcommand{\als}[1]{\begin{align*} #1 \end{align*} }
\renewcommand{\pmod}[1]{\,(\mathrm{mod}\,#1)}
\renewcommand{\mod}[1]{\,(\mathrm{mod}\,#1)}
\newcommand{\nn}{\nonumber \\}
\newcommand{\bv}\boldsymbol{}
\newtheorem{thm}{Theorem}[section]
\newtheorem{cor}[thm]{Corollary}
\newtheorem{prop}[thm]{Proposition}
\newtheorem{lma}[thm]{Lemma}
\theoremstyle{remark}
\newtheorem{rmk}[thm]{Remark}
\numberwithin{equation}{section}
\title{The frequency of elliptic curve groups over prime finite fields}
\author{Vorrapan Chandee}
\address[Vorrapan Chandee]{Department of Mathematics \\ Burapha University \\ 169 Long-hard Bangsaen rd, Saen suk, Mueang, Chonburi, 20131 Thailand}
\email{{\tt vorrapan@buu.ac.th}}
\author{Chantal David}
\address[Chantal David]{
Department of Mathematics and Statistics\\
Concordia University\\
1455 de Maisonneuve West\\
Montr\'eal, Qu\'ebec\\
H3G 1M8\\
Canada
}
\email{cdavid@mathstat.concordia.ca}
\author{Dimitris Koukoulopoulos}
\address[Dimitris Koukoulopoulos]{D\'epartement de math\'ematiques et de statistique\\
Universit\'e de Montr\'eal\\
CP 6128 succ. Centre-Ville\\
Montr\'eal, QC H3C 3J7\\
Canada}
\email{{\tt koukoulo@dms.umontreal.ca}}
\author{Ethan Smith}
\address[Ethan Smith]{
Department of Mathematics\\
Liberty University\\
1971 University Blvd\\
MSC Box 710052\\
Lynchburg, VA  24502
}
\email{ecsmith13@liberty.edu}
\subjclass[2010]{11G07, 11N45 (primary) 11N13, 11N36 (secondary)   }
\date{March 4, 2015}
\begin{document}

\maketitle

\begin{abstract}
Letting $p$ vary over all primes and $E$ vary over all elliptic curves over the finite field $\F_p$, we study the frequency to which a given group $G$ arises as a group of points $E(\F_p)$.
It is well-known that the only permissible groups are of the form $G_{m,k}:=\Z/m\Z\times \Z/mk\Z$.
Given such a candidate group, we let $M(\G)$ be the frequency to which the group $\G$ arises in this way.
Previously, the second and fourth named authors determined an asymptotic formula for $M(\G)$ assuming a conjecture about primes in short arithmetic progressions. In this paper, we prove several unconditional bounds for $M(\G)$, pointwise and on average. In particular, we show that $M(\G)$ is bounded above by a constant multiple of the expected quantity when $m\le k^A$ and that the conjectured asymptotic for $M(\G)$ holds for almost all groups $\G$ when $m\le k^{1/4-\epsilon}$.
We also apply our methods to study the frequency to which a given integer $N$ arises as the group order $\#E(\F_p)$.
\end{abstract}

%%%%%%%%%%%%%%%%%%%%%%%%%%%%%%%%%%%%%%%%%%%%%%%%%%%%%%%%%%%%%%%%%%%%%%%%%%%%%%%%%%%%%%%%%%%%%%%%%%%%%%%%%%%%%%%%%%%%%%%%%%%%%%%%%%%%%%%%%%%%%%%%%%%%%%%%%%%%%%%%%%%%%%%%%%%%%%%%%%%%%%%%%%%%%%%%%%%%%%%%%%%%%%%%%%%%%%%%%%%%%%%%%%%%%%%%%%%%%%%%%%%%%%%%%%%%%%%%%%%%%%%%%%%%%%%%%%%%%%%%%%%%%%%%%%%%%%%%%%%%%%%%%%%%%%%%%%%%%%%%%%%%%%%%%%%%%%%%%%%%%%%%%%%%%%%%%%%%%%%%%%%%%%

\section{Introduction}\label{intro}

Given an elliptic curve $E$ over the prime finite field $\F_p$, we let $E(\F_p)$ denote its set of $\F_p$ points.
It is well-known that $E(\F_p)$ admits the structure of an abelian group, and in fact,
\[
E(\F_p)\isom  G_{m,k}:= \Z/m\Z\times\Z/mk\Z
\]
for some positive integers $m$ and $k$. It is natural to wonder which groups of the form $G_{m,k}$ arise in this way and how often they occur as $p$ varies over all primes and $E$ varies over all elliptic curves over $\F_p$.
The former problem, of characterizing which groups are realized in this way was studied in~\cite{BPS:2012, CDKS1}, while the frequency of occurrence was studied by the second and fourth named authors in~\cite{DS-MEG}.
In the present work, we explore the frequency of occurrence further.

Given a group $G$ of the form $G_{m,k}=\Z/m\Z\times\Z/mk\Z$, we set $N=|G|=m^2k$ and let $M_p(G)$ denote the weighted number of isomorphism classes of elliptic curves over $\F_p$ with group isomorphic to $G$, that is to say
\[
M_p(G)=\sum_{\substack{E/\F_p\\ E(\F_p)\isom G}}\frac{1}{|\Aut_p(E)|},
\]
where the sum is taken over all isomorphism classes of elliptic curves over $\F_p$ and
$|\Aut_p(E)|$ is the number of $\F_p$-automorphisms of $E$.
It is worth noting here that $|\Aut_p(E)|=2$ for all but a bounded number of isomorphism classes $E$ over $\F_p$, and hence
\[
M_p(G) = \frac{1}{2} \#\{ E/\F_p  :  E(\F_p)\isom G \} +O(1),
\]

In \cite{DS-MEG}, the authors studied the weighted number
of isomorphism classes of elliptic curves over any prime finite field with group of points isomorphic to $G$, i.e., they studied
\[
M(G):=\sum_pM_p(G) .
\]
The primes counted by $M(G)$ must lie in a very short interval near $N=|G|$.
This is because the Hasse bound implies that $p+1-2\sqrt p<N<p+1+2\sqrt p$, which is equivalent to saying that
\[
N^{-} := N+1-2\sqrt{N} < p < N+1+2 \sqrt{N}=:N^+.
\]
Even the Riemann hypothesis does not guarantee the existence of a prime in such a short interval.
Hence the main theorem of \cite{DS-MEG} can only be proven under an appropriate conjecture concerning the distribution of primes in short intervals.
In the statement below, we refer to the conjecture assumed in~\cite{DS-MEG} as the
Barban-Davenport-Halberstam (BDH) estimate for short intervals.

Before stating the main theorem of~\cite{DS-MEG}, we fix some more notation.
Given a group $G=G_{m,k}$, we let $\Aut(G)$ denote its automorphism group (as a group).
This should not be confused with $\Aut_p(E)$ as defined above,
which refers to the set of $\F_p$-automorphisms of the elliptic curve $E$.
We also define the function
\eq{define K(G)}{
K(G)=
\prod_{\ell\nmid N}\left(1-\frac{\leg{N-1}{\ell}^2\ell+1}{(\ell-1)^2(\ell+1)}\right)
\prod_{\ell\mid m}\left(1-\frac{1}{\ell^2}\right)
\prod_{\substack{\ell\mid k\\ \ell\nmid m}}\left(1-\frac{1}{\ell(\ell-1)}\right),
}
where the products are taken over all primes $\ell$ satisfying the stated conditions and $\leg{\cdot}{\ell}$ denotes the usual Kronecker symbol.
In~\cite{DS-MEG}, the function $K(G)$ was only computed for odd order groups, and its definition contained a mistake.
It was corrected to the form that we give here in~\cite{DS-MEG-corr}.
Note that the function $K(G)$ is bounded between two constants independently of the the parameters $m$ and $k$. In paraphrased form, the main theorem of~\cite{DS-MEG} is as follows.

%%%%%%%%%%%%%%%%%%%%%%%%%%%%%%%%%%%%%%%%%%%%%%%%%%%%%%%%%%%%%%%%%%%%%%%%%%%%%%%%%%%%%%%%%%%%%%%%%%%%%%%%%%%%%%%%%%%%%%%%%%%%%%%%%%%%%%%%%%%%%%%%%%%%%%%%%%%%%%%%%%%%%%%%%%%%%%%%%%%%%%%

\begin{thm}[David-Smith] \label{meg-rephrased}
Assume that the BDH estimate for short intervals holds.
Fix $A,B>0$.
Then for every nontrivial, odd order group $G=G_{m,k}$, we have that
\[
M(G)=\left(K(G) + O_{A,B}\left( \frac{1}{(\log|G|)^{B}} \right)\right)\frac{|G|^2}{|\Aut (G)|\log |G|}
	\asymp \frac{mk^2}{\phi(m)\phi(k)\log k},
\]
provided that $m\le (\log k)^A$.
\end{thm}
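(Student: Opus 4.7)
The approach is to start from the Deuring--Schoof correspondence, which expresses $M_p(G_{m,k})$ in terms of a Hurwitz--Kronecker class number. If $E(\F_p)\isom G_{m,k}$, then necessarily $p\equiv 1\pmod m$, the trace of Frobenius is $a:=p+1-m^2k$, and $a\equiv 2\pmod m$; under these conditions, the Schoof-type formula yields
\[
M_p(G_{m,k}) \;=\; H\!\left(\frac{4p-a^2}{m^2}\right),
\]
with $H(\cdot)$ the Hurwitz class number, and vanishes otherwise. Summing over primes $p$ in the Hasse interval $(N^-,N^+)$ with $p\equiv 1\pmod m$ reduces $M(G)$ to a weighted prime-counting problem in a very short arithmetic progression.

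The next step is to expand $H$ via the classical identity $H(D)=\sum_{f^2\mid D}h(-D/f^2)/w_f$ together with the Dirichlet class number formula $h(-d)\asymp \sqrt d\,L(1,\chi_{-d})$. Writing $L(1,\chi_{-d})=\sum_n \chi_{-d}(n)/n$ and swapping orders of summation reformulates the problem as estimating, for various moduli $n$, sums of the shape
\[
\sum_{\substack{p\in(N^-,N^+)\\ p\equiv 1\pmod m\\ p\equiv b\pmod n}} \sqrt{4p-a^2},
\]
where the congruence mod $n$ arises from rewriting the Kronecker symbol $\leg{a^2-4p}{n}$ using $a=p+1-N$. A standard smoothing/truncation reduces the $n$-range to $n\le \sqrt{N}(\log N)^{-C'}$ with acceptable tail loss.

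At this stage the BDH estimate for short intervals enters decisively. Since $N^+-N^-\asymp\sqrt N$, no unconditional equidistribution in such short intervals is available; the assumed conjecture supplies, for $q\le \sqrt N/(\log N)^C$ and most $b$ coprime to $q$, an asymptotic of the form
\[
\sum_{\substack{p\in(N^-,N^+)\\ p\equiv b\pmod q}} 1 \;=\; \frac{4\sqrt N}{\phi(q)\log N} \;+\; \text{(error small on $q$-average)}.
\]
Applying this with $q=mn$ produces the expected main term whenever the composite modulus stays below the BDH threshold. The hypothesis $m\le(\log k)^A$ is used precisely to absorb the extra factor of $m$ into the polylogarithmic slack, leaving a $(\log k)^{-B}$ saving in the error after choosing $C$ large in terms of $A,B$.

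Finally, one must identify the resulting main term with $K(G)|G|^2/(|\Aut(G)|\log|G|)$. This is an Euler-product calculation: summing over admissible residue classes $b$ mod each prime $\ell$ produces one of the three local factors appearing in \eqref{define K(G)}, according as $\ell\nmid N$ (where the density is governed by the splitting of $\ell$ in the relevant imaginary quadratic orders, producing the $\leg{N-1}{\ell}$ symbol), $\ell\mid m$ (where rational full $\ell$-torsion tightens the density to $1-1/\ell^2$), or $\ell\mid k$ but $\ell\nmid m$ (a weaker cyclic constraint giving $1-1/(\ell(\ell-1))$). Combining with the standard formula for $|\Aut(G_{m,k})|$ in terms of $\phi(m)$ and $\phi(mk)$, together with $|G|=m^2k$, recovers the stated asymptotic. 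The main obstacle is precisely this uniform Euler-product bookkeeping, in particular controlling the contribution of the inner sum over conductor divisors $f$ in $H$ and verifying that the local factors combine cleanly; restricting to odd-order $G$ sidesteps the delicate behavior of the Kronecker symbol $\leg{\cdot}{2}$ and of $\Aut(G)$ at the prime $2$, which otherwise would require separate analysis.
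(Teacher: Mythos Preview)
This theorem is not proved in the present paper: it is quoted from \cite{DS-MEG} as background, and no argument for it appears here. What the paper does contain is a strengthening, Theorem~\ref{approximation of M(G)}, whose proof (Sections~\ref{deuring lemma}--\ref{2}) follows the same broad skeleton you describe but with two substantive differences worth noting.

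First, your starting formula $M_p(G_{m,k})=H\!\bigl((4p-a^2)/m^2\bigr)$ is not quite correct. The Hurwitz--Kronecker class number $H(D_N(p)/m^2)$ counts isomorphism classes whose rational $m$-torsion is \emph{full}, i.e.\ $E(\F_p)[m]\cong(\Z/m\Z)^2$; this overcounts $M_p(G_{m,k})$ by including curves with $E(\F_p)\cong G_{mr,k/r^2}$ for $r^2\mid k$. An inclusion--exclusion over such $r$ (relation~\eqref{inclusion exclusion}) is needed, and the outcome is the restricted sum of Lemma~\ref{Deuring for groups}: one keeps only divisors $f$ with $(f,k)=1$. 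This coprimality condition threads through the entire Euler-product computation and is responsible for the distinction between the $\ell\mid m$ and $\ell\mid k,\ \ell\nmid m$ local factors in $K(G)$, so it is not merely cosmetic.

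Second, the paper's route to the main term departs from the ``expand $L(1,\chi)$ as $\sum_n\chi(n)/n$ and truncate via P\'olya--Vinogradov/Burgess'' strategy you outline (which is indeed what \cite{DS-MEG,DS-MEN} do). Instead, Lemma~\ref{lemmashortproduct} shows that for all but $Q^{2/\alpha}$ exceptional conductors, $L(1,\chi)$ is well approximated by an Euler product over primes $\le(\log Q)^{O(1)}$; combined with Lemma~\ref{smooth}, this replaces the $L$-value by a sum over integers supported on very small primes. The payoff is that the moduli $b$ in the prime-counting step stay polylogarithmic rather than of size $N^{1/2-\epsilon}$, which is what allows the error term in Theorem~\ref{approximation of M(G)} to be expressed as an average of $E(y,h;qm)$ over genuinely small $q$ and hence to be controlled on average by Lemma~\ref{bv-short}. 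Your sketch would recover the original \cite{DS-MEG} result but not this refinement.
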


%%%%%%%%%%%%%%%%%%%%%%%%%%%%%%%%%%%%%%%%%%%%%%%%%%%%%%%%%%%%%%%%%%%%%%%%%%%%%%%%%%%%%%%%%%%%%%%%%%%%%%%%%%%%%%%%%%%%%%%%%%%%%%%%%%%%%%%%%%%%%%%%%%%%%%%%%%%%%%%%%%%%%%%%%%%%%%%%%%%%%%%

For precise details concerning the conjecture assumed to prove Theorem~\ref{meg-rephrased},
we refer the reader to \cite{DS-MEG}. We note that the result of Theorem~\ref{meg-rephrased} is restricted to the range $m\le (\log k)^A$. However, we believe that it should hold in the range $m\le k^A$.
Proving such a result at the present time would however require an even stronger hypothesis than the one assumed in \cite{DS-MEG}. Unconditionally, it is possible to obtain upper bounds of the correct order of magnitude in this larger range.
This is the context of our first theorem.

%%%%%%%%%%%%%%%%%%%%%%%%%%%%%%%%%%%%%%%%%%%%%%%%%%%%%%%%%%%%%%%%%%%%%%%%%%%%%%%%%%%%%%%%%%%%%%%%%%%%%%%%%%%%%%%%%%%%%%%%%%%%%%%%%%%%%%%%%%%%%%%%%%%%%%%%%%%%%%%%%%%%%%%%%%%%%%%%%%%%%%%

\begin{thm} \label{CDKS} Fix $A>0$ and consider integers $m$ and $k$ with $1\le m\le k^A$. Let $G=G_{m,k}$, $N=|G|=m^2k$, and
\[
\delta = \frac{1}{N/(\phi(m)\log(2N))}
	\sum_{\substack{ N^-<p\le N^+ \\  p\equiv 1\pmod m }} \sqrt{(p-N^-)(N^+-p)},
\]
and note that $\delta\ll1$ by the Brun-Titchmarsch inequality. For any fixed $\lambda>1$,
\[
\delta^\lambda \cdot \frac{|G|^2}{ |\Aut (G)|\log(2|G|)} \ll M(G)  \ll \delta^{1/\lambda} \cdot \frac{|G|^2}{ |\Aut (G)|\log(2|G|)},
\]
the implied constants depending at most on $A$ and $\lambda$.
\end{thm}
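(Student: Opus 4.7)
The plan is to start from the Deuring--Kronecker formula used in~\cite{DS-MEG}, which expresses $M_p(G_{m,k})$, for $p\equiv 1\,(\mathrm{mod}\,m)$ in the Hasse interval $(N^-,N^+)$ and with trace $a_p=p+1-N$, as a M\"obius-weighted combination of Hurwitz class numbers of the form $H\bigl((4p-a_p^2)/(mf)^2\bigr)$ over auxiliary integers $f$. Setting $D_p:=4p-a_p^2=(p-N^-)(N^+-p)$ and invoking Dirichlet's class number formula
\[
H(D)=\frac{\sqrt{D}}{\pi}\sum_{g^2\mid D}\frac{L(1,\chi_{-D/g^2})}{g}\cdot(\text{bounded local factors}),
\]
I would reduce both bounds, after routine handling of the conductor sum in $g$, to controlling the weighted sum
\[
S\;:=\;\sum_{\substack{N^-<p<N^+\\ p\equiv 1\,(\mathrm{mod}\,m)}}\sqrt{D_p}\,L(1,\chi_p),
\]
where $\chi_p$ denotes the Kronecker character of discriminant $a_p^2-4p$.

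For the upper bound, I would apply H\"older's inequality with exponents $\lambda$ and $\lambda/(\lambda-1)$ to split
\[
S\;\le\;\Big(\sum_p \sqrt{D_p}\Big)^{1/\lambda}\Big(\sum_p \sqrt{D_p}\,L(1,\chi_p)^{\lambda/(\lambda-1)}\Big)^{(\lambda-1)/\lambda}.
\]
The first factor equals $\delta^{1/\lambda}\bigl(N/(\phi(m)\log 2N)\bigr)^{1/\lambda}$ directly from the definition of $\delta$; for the second I would use $\sqrt{D_p}\le 2\sqrt N$ together with a positive-moment estimate $\sum_p L(1,\chi_p)^{\lambda/(\lambda-1)}\ll \sqrt N/(\phi(m)\log N)$ to obtain $S\ll\delta^{1/\lambda}N/(\phi(m)\log N)$, which after dividing by $m$ and plugging in the formula for $|\Aut G_{m,k}|$ matches the required upper bound. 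For the lower bound, I would reverse the scheme: expanding $B:=\sum_p\sqrt{D_p}=\sum_p(\sqrt{D_p}L(1,\chi_p))^{1/\lambda}\cdot D_p^{(\lambda-1)/(2\lambda)}L(1,\chi_p)^{-1/\lambda}$ and applying H\"older yields
\[
B\;\le\;S^{1/\lambda}\Big(\sum_p \sqrt{D_p}\,L(1,\chi_p)^{-1/(\lambda-1)}\Big)^{(\lambda-1)/\lambda},
\]
so that a parallel negative-moment bound $\sum_p L(1,\chi_p)^{-1/(\lambda-1)}\ll\sqrt N/(\phi(m)\log N)$ would invert to $S\gg\delta^\lambda\cdot N/(\phi(m)\log N)$. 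Keeping only the $f=1$ term of the Hurwitz sum (which dominates after a standard M\"obius step pinning the group structure to exactly $G_{m,k}$ as opposed to $G_{md,k/d^2}$ for $d>1$) then produces the claimed $\delta^\lambda$ lower bound on $M(G)$.

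The hard part will be establishing the positive and negative moment bounds $\sum_p L(1,\chi_p)^{\pm q}\ll \sqrt N/(\phi(m)\log N)$ used above, uniformly in $m\le k^A$, where $p$ ranges over primes in the very short interval $(N^-,N^+)$ restricted to the progression $p\equiv 1\,(\mathrm{mod}\,m)$. The existing moment theorems for $L(1,\chi_d)$ (of Chowla--Elliott or Granville--Soundararajan type) average over all fundamental discriminants in a dyadic range, so to obtain them in our prime-indexed, short-interval setting I would expect to combine Heath-Brown's quadratic large sieve with an upper-bound sieve of Selberg or Brun--Titchmarsh type, together with a separate ad hoc step to neutralize any exceptional Siegel zero among the characters $\chi_p$.
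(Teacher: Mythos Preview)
Your H\"older framework is correct and is exactly what the paper does: apply H\"older to the Deuring/class number formula (the paper packages this as Corollary~\ref{bounds for M(G)}) and reduce to moment bounds for the relevant $L(1,\chi)$ (the paper's Proposition~\ref{startwiththat}). Your identification of those moment bounds as the crux is also right, and keeping only the $f=1$ term for the lower bound is what the paper does as well.

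There is, however, a genuine gap in the moment bound you write down. Writing $p=1+jm$ one has $d(p)=(j-mk)^2-4k\equiv j^2\pmod\ell$ for every prime $\ell\mid k$, so $\bigl(\tfrac{d(p)}{\ell}\bigr)=1$ whenever $\ell\nmid j$. Thus the characters $\chi_p$ are systematically biased at primes dividing $k$, and $\L(d(p))$ carries an extra factor essentially equal to $\prod_{\ell\mid k}(1-1/\ell)^{-1}=k/\phi(k)$. The correct moment bound (Proposition~\ref{startwiththat}) is
\[
\sum_{\substack{N^-<p<N^+\\ p\equiv 1\pmod m}}\Big(\frac{|d(p)|}{\phi(|d(p)|)}\Big)^{s}\L(d(p))^r\ \ll_{r,s}\ \Big(\frac{k}{\phi(k)}\Big)^{r}\frac{\sqrt N}{\phi(m)\log(2k)},
\]
not the unweighted $\ll\sqrt N/(\phi(m)\log N)$ you propose. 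This $(k/\phi(k))^r$ is not cosmetic: by Lemma~\ref{Aut(G)} one has $|G|^2/|\Aut G|\asymp mk^2/(\phi(m)\phi(k))$, and it is precisely the extra $k/\phi(k)$ that supplies the $1/\phi(k)$ in the final answer. With your stated bound the output would be $mk/(\phi(m)\log N)$, which does not match the target.

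On method: the paper does not use Heath-Brown's quadratic large sieve. Instead it invokes an Elliott/Granville--Soundararajan type approximation (Lemma~\ref{lemmashortproduct}) saying that $L(1,\chi)$ agrees with its Euler product over primes $\le(\log Q)^{C}$ for all but $Q^{\epsilon}$ conductors; the small exceptional set is dispatched with Siegel's theorem and a divisor count. After factoring out the $k/\phi(k)$ bias and truncating, the remaining prime sum is majorized by a sum over almost-primes via the fundamental lemma of sieve methods (Lemma~\ref{lemma-FI}), which can then be evaluated by direct local computations. Your large-sieve route may be viable, but you would still have to strip off the bias at primes $\ell\mid k$ first, or the bookkeeping will not close.
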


%%%%%%%%%%%%%%%%%%%%%%%%%%%%%%%%%%%%%%%%%%%%%%%%%%%%%%%%%%%%%%%%%%%%%%%%%%%%%%%%%%%%%%%%%%%%%%%%%%%%%%%%%%%%%%%%%%%%%%%%%%%%%%%%%%%%%%%%%%%%%%%%%%%%%%%%%%%%%%%%%%%%%%%%%%%%%%%%%%%%%%%

Employing the above result together with the Bombieri-Vinogradov theorem, we also show that the lower bound implicit in Theorem~\ref{meg-rephrased} holds for a positive proportion of groups $G$.

%%%%%%%%%%%%%%%%%%%%%%%%%%%%%%%%%%%%%%%%%%%%%%%%%%%%%%%%%%%%%%%%%%%%%%%%%%%%%%%%%%%%%%%%%%%%%%%%%%%%%%%%%%%%%%%%%%%%%%%%%%%%%%%%%%%%%%%%%%%%%%%%%%%%%%%%%%%%%%%%%%%%%%%%%%%%%%%%%%%%%%%

\begin{thm} \label{CDKS2} Consider numbers $x$ and $y$ with $1\le x\le \sqrt{y}$.
Then there are absolute positive constants $c_1$ and $c_2$ such that
\[
M(\G) \ge c_1\cdot \frac{|G_{m,k}|^2}{|\Aut (G_{m,k})|\log(2|G_{m,k}|)}
\]
for at least $c_2xy$ pairs $(m,k)$ with $m\le x$ and $k\le y$.
\end{thm}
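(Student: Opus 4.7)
The plan is to deduce Theorem~\ref{CDKS2} from Theorem~\ref{CDKS}. Setting $\lambda=2$ in Theorem~\ref{CDKS} gives
\[
M(G_{m,k}) \gg \delta(m,k)^{2}\cdot\frac{|G_{m,k}|^2}{|\Aut(G_{m,k})|\log(2|G_{m,k}|)},
\]
where $\delta=\delta(m,k)$ is the quantity defined there. Since the Brun--Titchmarsh inequality gives $\delta(m,k)\ll 1$ uniformly, it suffices to find absolute constants $c_0,c_2>0$ and at least $c_2xy$ pairs $(m,k)$ with $m\le x$, $k\le y$ for which $\delta(m,k)\ge c_0$. A routine averaging argument (combining $\sum\delta\ge\mathrm{const}\cdot xy$ with $\delta\ll 1$) reduces this to the first-moment lower bound
\[
\sum_{m\le x}\sum_{k\le y}\delta(m,k)\ \gg\ xy.
\]

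To prove this estimate, I would swap the order of summation so that the prime $p$ appears as an outer variable after $m$. Writing $N=m^2k$, the Hasse condition $N^-<p<N^+$ becomes equivalent to $k$ lying in an interval $I_m(p)$ of length $\approx 4\sqrt{p}/m^2$ centred at $p/m^2$. Restricting attention to the $\asymp xy$ pairs with $m\le x/2$ and $y/2<k\le y$---the hypothesis $x\le\sqrt y$ then ensures $p\asymp m^2k\gg m^4$, so that $I_m(p)$ contains many integers---a Riemann-sum approximation combined with the semicircle integral $\int_{-2\sqrt p}^{2\sqrt p}\sqrt{4p-u^2}\,du=2\pi p$ yields
\[
\sum_{\substack{y/2<k\le y\\ N^-<p<N^+}}\frac{\log(2m^2k)}{m^2k}\sqrt{(p-N^-)(N^+-p)}=\frac{2\pi\log(2p)}{m^2}\bigl(1+o(1)\bigr),
\]
uniformly for $p\asymp m^2y$.

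It then remains to sum over primes $p\equiv 1\pmod m$ in the range $p\asymp m^2y$ and then over $m\le x/2$. Applying the Bombieri--Vinogradov theorem with $X=x^2y$---admissible since $x\le\sqrt y\le\sqrt{X}/(\log X)^B$ for $y$ large---gives, for all but $o(x)$ values of $m\le x$, the lower bound $\sum_{p\equiv 1\pmod m,\,p\asymp m^2y}\log p\gg m^2y/\phi(m)$. Substituting, one obtains
\[
\sum_{m\le x/2}\sum_{y/2<k\le y}\delta(m,k)\ \gg\ \sum_{m\le x/2}\frac{\phi(m)}{m^2}\cdot\frac{m^2y}{\phi(m)}\ \gg\ xy,
\]
which is the desired conclusion. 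The principal obstacle is the uniformity of Bombieri--Vinogradov as $m$ approaches $\sqrt y$, which is essentially the edge of its range of validity; this is tolerable because we only need a lower bound of the correct order of magnitude (not an asymptotic), so the usual polylogarithmic losses are harmless. A secondary technical point is controlling the error in the Riemann-sum approximation uniformly in $p$, which is tractable because our restrictions guarantee that $I_m(p)$ contains $\gg \sqrt{y}/m \gg 1$ integers.
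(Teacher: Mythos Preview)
Your proposal is correct and follows essentially the same route as the paper: apply Theorem~\ref{CDKS} with $\lambda=2$, show the first moment $\sum_{m,k}\delta(m,k)\gg xy$ by swapping the order of summation and invoking Bombieri--Vinogradov, and then pigeonhole using $\delta\ll1$.

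The paper's execution is somewhat cleaner than yours in two places. First, instead of a Riemann-sum/semicircle computation over the full Hasse window, the paper throws away most of the window and keeps only primes in $\bigl((m\sqrt{k}-\tfrac12)^2,(m\sqrt{k}+\tfrac12)^2\bigr)$, where the weight $\sqrt{(p-N^-)(N^+-p)}$ is simply $\asymp\sqrt{N}$; this sidesteps the endpoint issue you flag (and note your claimed $1+o(1)$ is really only $1+O(1)$ with a constant bounded away from $1$ when $m$ is near $\sqrt{y}$, though this is harmless for a lower bound). Second, the paper restricts $m$ to a short range $(3x/4,x]$ so that the relevant $p$-range becomes $\asymp x^2y$ \emph{uniformly} in $m$, allowing a single direct application of Bombieri--Vinogradov rather than the implicit dyadic decomposition in $m$ that your version would need (since your $p$-range $p\asymp m^2y$ moves with $m$). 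Both tweaks are purely technical; the underlying idea is the same.
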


%%%%%%%%%%%%%%%%%%%%%%%%%%%%%%%%%%%%%%%%%%%%%%%%%%%%%%%%%%%%%%%%%%%%%%%%%%%%%%%%%%%%%%%%%%%%%%%%%%%%%%%%%%%%%%%%%%%%%%%%%%%%%%%%%%%%%%%%%%%%%%%%%%%%%%%%%%%%%%%%%%%%%%%%%%%%%%%%%%%%%%%

\begin{rmk}
It is not possible for such a lower bound to hold for all groups $G=\G$.
As was noted in~\cite{BPS:2012}, several groups of this form do not arise in this way at all.
For example, the group $G_{11,1}$ never occurs as the group of points on any elliptic curve over any finite field.
\end{rmk}

Our final result for $M(\G)$ is that on average the full asymptotic of Theorem~\ref{meg-rephrased} holds unconditionally.

%%%%%%%%%%%%%%%%%%%%%%%%%%%%%%%%%%%%%%%%%%%%%%%%%%%%%%%%%%%%%%%%%%%%%%%%%%%%%%%%%%%%%%%%%%%%%%%%%%%%%%%%%%%%%%%%%%%%%%%%%%%%%%%%%%%%%%%%%%%%%%%%%%%%%%%%%%%%%%%%%%%%%%%%%%%%%%%%%%%%%%%

\begin{thm} \label{CDKS3} Fix $\epsilon>0$ and $A\ge1$. For $2\le x\le y^{1/4-\epsilon}$ we have that
\als{
\frac{1}{xy}\sum_{\substack{m\le x,\, k\le y \\ mk>1}} \left| M(G_{m,k}) - \frac{K(\G) |\G|^2}{|\Aut (\G)|\log|\G|} \right|
	\ll \frac{y}{(\log y)^A},
}
the implied constant depending at most on $A$ and $\epsilon$. Moreover, if the generalized Riemann hypothesis is true, then the same result is true for $x\le y^{1/2-\epsilon}$.
\end{thm}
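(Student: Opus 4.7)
The strategy is to refine the pointwise analysis behind Theorem \ref{CDKS} into an on-average asymptotic, with a Bombieri-Vinogradov type estimate for primes in short intervals as the key analytic input.

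Via the Deuring-Schoof correspondence, $M(\G)$ is expressible as a weighted sum of Hurwitz-Kronecker class numbers $H(4p - (p+1-N)^2)$ over primes $p \equiv 1 \pmod{m}$ in the Hasse interval $(N^-, N^+)$, subject to local conditions at primes dividing $mk$. Expanding $H$ via its Gauss-Dirichlet formula, one arrives at an identity of the shape
\[
M(\G) = \frac{1}{|\Aut(\G)|\log N} \sum_{\substack{N^- < p < N^+ \\ p \equiv 1 \pmod{m}}} \sqrt{(p-N^-)(N^+-p)} \cdot F(p, \G) + E_1(\G),
\]
where $F(p, \G)$ is a product of local Euler factors (contributed by the class-number decomposition at primes $\ell \nmid N$ and by the local conditions at $\ell \mid mk$) and $E_1(\G)$ collects lower-order error terms.

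Next, the Euler factor at primes $\ell \nmid N$ is separated into its $p$-average, which produces the first Euler product in the definition of $K(\G)$, and a fluctuating remainder depending on $p$ only through the Kronecker symbols $\left(\frac{4p - (p+1-N)^2}{\ell}\right)$. The fluctuating part is controlled using the large sieve inequality for real Dirichlet characters (as $\ell$ varies), contributing admissible error after summing over $(m, k)$. This reduces the problem to establishing that the weighted prime sum
\[
S(\G) := \sum_{\substack{N^- < p < N^+ \\ p \equiv 1 \pmod{m}}} \sqrt{(p-N^-)(N^+-p)}
\]
is, on average over $(m,k)$, approximated by $\frac{1}{\phi(m)\log N} \int_{N^-}^{N^+} \sqrt{(t-N^-)(N^+-t)}\,dt = \frac{\pi N}{2\phi(m)\log N}$.

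The discrepancy $|S(\G) - \text{integral}|$ is, by partial summation, controlled by $\sqrt{N}$ times the standard short-interval AP discrepancy $\max_t |\pi(N^-, t; m, 1) - (t - N^-)/(\phi(m)\log N)|$. Summing over $k \le y$ (so that the short intervals $(N^-, N^+)$ sweep out a long interval of length $\asymp m^2 y$) and then over $m \le x$, one obtains a Bombieri-Vinogradov-type double sum over primes $p \in [m^2, m^2 y]$ and moduli $m \le x$. The $\sqrt{N}$ weighting costs a factor of roughly $\sqrt{y}$, reducing the admissible modulus range from the classical BV bound $x \le \sqrt{y}$ to $x \le y^{1/4 - \epsilon}$. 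Under GRH for Dirichlet $L$-functions, pointwise zero-density type estimates recover this loss, extending the range to $x \le y^{1/2-\epsilon}$.

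The main obstacle will be the coordination of the double average over interval location (via $k$) and modulus (via $m$) in a form compatible with a single clean Bombieri-Vinogradov (or GRH-conditional) input, while maintaining uniformity in $m$ throughout the class-number expansion. A secondary but essential technical step is the large-sieve handling of the Kronecker-symbol Euler factor; verifying that the diagonal $p$-average reproduces the first Euler product in $K(\G)$ uses the identity $\left(\frac{4p - a^2}{\ell}\right)^2 = 1$ whenever $\ell \nmid 4p - a^2$, combined with the orthogonality relations for characters modulo $\ell$.
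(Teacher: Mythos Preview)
Your high-level strategy is right in spirit, but two of your key technical inputs are misidentified, and the first of these is a genuine gap.

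The analytic input is \emph{not} the classical Bombieri--Vinogradov theorem. Even after summing over $k$, each summand carries a short-interval prime discrepancy on $(N^-,N^+)$ of length $\asymp x\sqrt{y}$; these do not telescope into a single long-interval count. What the paper uses is a Bombieri--Vinogradov estimate for short intervals \emph{averaged over the interval position} (Lemma~\ref{bv-short}, from~\cite{Kou}): for $Q^2\le h/X^{1/6+\epsilon}$,
\[
\int_X^{2X}\sum_{q\le Q}E(y,h;q)\,\dee y \ll \frac{Xh}{(\log X)^A},
\]
with the constraint relaxing to $Q^2\le h/X^\epsilon$ under GRH. Your heuristic for the exponent $1/4$ (a ``$\sqrt{N}$ weighting cost'' halving the classical BV range) is not how the arithmetic works: with $X\asymp x^2y$, $h\asymp x\sqrt{y}$, and modulus $Q\asymp xy^\epsilon$, the unconditional constraint $Q^2\le h/X^{1/6}$ gives exactly $x\le y^{1/4-\epsilon}$, and the GRH constraint gives $x\le y^{1/2-\epsilon}$. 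The $1/6$ here is the nontrivial content of~\cite{Kou}.

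Second, the paper does not use the large sieve for real characters to tame the Euler factors. It uses instead the Elliott/Granville--Soundararajan result (Lemma~\ref{lemmashortproduct}): for all but $Q^{2/\alpha}$ conductors, $L(1,\chi)$ is well approximated by its Euler product over primes $\le(\log Q)^{O_\alpha(1)}$. This is what forces the auxiliary moduli $q$ arising from the Dirichlet-series expansion to be tiny ($\le k^\epsilon$), so that the total modulus $qm\le xy^\epsilon$ fits inside the short-interval BV window above. A large-sieve approach would naturally leave moduli of size a power of $k$, and it is not clear those would fit the constraint $Q^2\le h/X^{1/6}$. The bridge between the class-number formula and Lemma~\ref{bv-short} is the pointwise approximation Theorem~\ref{approximation of M(G)}, which packages both ingredients into an explicit error $\frac{\sqrt{k}}{h}\sum_{q\le k^\epsilon}\tau_3(q)\int_{N^-}^{N^+}E(y,h;qm)\,\dee y$ that can then be summed over $(m,k)$.
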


%%%%%%%%%%%%%%%%%%%%%%%%%%%%%%%%%%%%%%%%%%%%%%%%%%%%%%%%%%%%%%%%%%%%%%%%%%%%%%%%%%%%%%%%%%%%%%%%%%%%%%%%%%%%%%%%%%%%%%%%%%%%%%%%%%%%%%%%%%%%%%%%%%%%%%%%%%%%%%%%%%%%%%%%%%%%%%%%%%%%%%%%%%

In \cite{DS-MEN, DS-MEN-corr}, the second and fourth named authors studied the related question of how many elliptic curves over $\F_p$ have a given number of points, that is to say the asymptotic behaviour of
\[
M(N) :=   \sum_p  \sum_{\substack{ E/\F_p\\ \#E(\F_p)=N}}\frac{1}{|\Aut_p(E)|} .
\]
It was shown in \cite{DS-MEN, DS-MEN-corr} that
\[
M(N)\sim K(N)\cdot \frac{N^2}{\phi(N)\log N}       \quad(N\to\infty)
\]
under suitable assumptions on the distribution of primes in short arithmetic progressions, where
\eq{define K(N)}{
K(N) = \prod_{\ell\nmid N} \left(1- \frac{ \leg{N-1}{\ell}^2\ell+1}{(\ell-1)^2(\ell+1)} \right)
		\prod_{\ell|N} \left(1-\frac{1}{\ell^{\nu_\ell(N)}(\ell-1)}\right).
}
Here $\nu_\ell(N)$ denotes the usual $\ell$-adic valuation of $N$.
As one might expect, the methods of this paper apply to the study of $M(N)$ as well.

We start by recording the obvious identity
\[
M(N) = \sum_{m^2k=N} M(\G) .
\]
Then it is possible to show that, as expected, most of the contribution to $M(N)$ comes from groups $\G$ with $m$ small, that is to say groups that are nearly cyclic.

%%%%%%%%%%%%%%%%%%%%%%%%%%%%%%%%%%%%%%%%%%%%%%%%%%%%%%%%%%%%%%%%%%%%%%%%%%%%%%%%%%%%%%%%%%%%%%%%%%%%%%%%%%%%%%%%%%%%%%%%%%%%%%%%%%%%%%%%%%%%%%%%%%%%%%%%%%%%%%%%%%%%%%%%%%%%%%%%%%%%%%%%%%

\begin{thm}\label{MEN-1} For $N\ge1$ and $x\ge1$, we have that
\[
M(N) = \sum_{\substack{m^2k=N \\ m\le x}} M(\G)
			+ O\left( \frac{N^2}{x\phi(N)\log(2N)} \right) .
\]
\end{thm}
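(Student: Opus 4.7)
The plan is to work from the identity
\[
M(N) = \sum_{m^2k=N} M(G_{m,k})
\]
recorded in the excerpt, treat the terms with $m\le x$ as the main term, and show that the tail
\[
T := \sum_{\substack{m^2k=N\\ m>x}} M(G_{m,k}) \ll \frac{N^2}{x\phi(N)\log(2N)}.
\]

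For the first step, I would apply the upper bound of Theorem \ref{CDKS} to each summand in $T$. Since $\delta\ll 1$ by Brun-Titchmarsch, this yields the pointwise estimate
\[
M(G_{m,k}) \ll \frac{N^2}{|\Aut(G_{m,k})|\log(2N)}.
\]
A direct computation with the Hillar-Rhea formula, carried out prime by prime on $G_{m,k}=\Z/m\Z\times\Z/mk\Z$, gives $|\Aut(G_{m,k})| \asymp m^3\phi(m)\phi(k)$ with absolute implied constants. Writing $g=\gcd(m^2,k)$ and using the standard identity $\phi(ab) = \phi(a)\phi(b)\gcd(a,b)/\phi(\gcd(a,b))$, one obtains $\phi(N) = m\phi(m)\phi(k)\cdot g/\phi(g)$, which converts the previous bound into
\[
M(G_{m,k}) \ll \frac{N^2}{m^2\phi(N)\log(2N)}\cdot\frac{g}{\phi(g)}.
\]

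It then suffices to prove the elementary estimate
\[
\sum_{\substack{m>x\\ m^2\mid N}}\frac{1}{m^2}\cdot\frac{g(m)}{\phi(g(m))} \ll \frac{1}{x}.
\]
I would achieve this by expanding $g/\phi(g)=\sum_{d\mid g,\,\mu^2(d)=1}1/\phi(d)$, swapping the order of summation, and writing $m=de$; the constraint $d\mid g(m)$ then becomes $d\mid m$ and $d^3 e^2\mid N$. The inner sum over $e>x/d$ is $O(\min(d/x,\,1))$, and the outer sum over squarefree $d$ is dominated by the absolutely convergent Euler product $\prod_p(1+1/(p(p-1)))$, producing the desired $O(1/x)$.

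The principal obstacle is that Theorem \ref{CDKS} is stated only under the hypothesis $m\le k^A$, which in our setting $k=N/m^2$ translates to $m\le N^{A/(1+2A)}$. Choosing $A$ large enough covers $m\le N^{1/2-\epsilon}$ for any fixed $\epsilon>0$, but the residual range $m>N^{1/2-\epsilon}$ must be treated separately. In that boundary regime the divisor bound $d(N)\ll_\epsilon N^\epsilon$ limits the number of admissible $m$, and the constraint $(\Z/m\Z)^2\subset E(\F_p)$ forces $p\equiv 1\pmod{m}$, so Brun-Titchmarsch combined with the standard Hurwitz class number bound on $M_p(G_{m,k})$ suffices to absorb this contribution into the stated error.
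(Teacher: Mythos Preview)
Your approach is essentially the same as the paper's: bound each tail term $M(G_{m,k})$ using the upper bound machinery behind Theorem~\ref{CDKS}, convert to a sum of size $\sum_{m>x}1/(m\phi(m))$ or a close variant, and conclude $O(1/x)$. Steps (1)--(5) of your sketch are correct; your formula $|\Aut(G_{m,k})|\asymp m^3\phi(m)\phi(k)$ matches Lemma~\ref{Aut(G)}, and your tail estimate via $g/\phi(g)=\sum_{d\mid g}\mu^2(d)/\phi(d)$ is sound.

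The one place where you diverge from the paper is your ``principal obstacle''. You are right that Theorem~\ref{CDKS} as \emph{stated} requires $m\le k^A$, but this hypothesis is only used to ensure $\log(2N)\asymp\log(2k)$ when rewriting the bound in the form displayed there. The underlying inequality
\[
M(G_{m,k}) \ll \frac{k^{3/2}}{\phi(k)}\cdot\frac{\sqrt{N}}{\phi(m)\log(2k)}
\]
follows directly from Corollary~\ref{bounds for M(G)} together with Proposition~\ref{startwiththat} (take $r=s=1$, use $|d(p)|\le 4k$), and both of those results are stated for \emph{all} $m,k\in\mathbb{N}$ with no restriction. The paper's proof of Theorem~\ref{MEN-1} invokes exactly this unrestricted bound (citing it a bit loosely as ``Theorem~\ref{CDKS}''), then passes to $\phi(k)\ge \phi(N)k/N$ and sums
\[
\sum_{\substack{m^2\mid N\\ x<m\le\sqrt{N}}}\frac{N^2}{m\phi(m)\phi(N)\log(2N/m^2)}\ll\frac{N^2}{x\phi(N)\log(2N)}.
\]
So your separate treatment of the range $m>N^{1/2-\epsilon}$ is unnecessary: simply cite Proposition~\ref{startwiththat} (or the proof of Theorem~\ref{CDKS}) rather than the theorem statement itself, accept the $\log(2k)$ in the denominator, and absorb it in the summation as the paper does.
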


Finally, we conclude with two more results on $M(N)$.

%%%%%%%%%%%%%%%%%%%%%%%%%%%%%%%%%%%%%%%%%%%%%%%%%%%%%%%%%%%%%%%%%%%%%%%%%%%%%%%%%%%%%%%%%%%%%%%%%%%%%%%%%%%%%%%%%%%%%%%%%%%%%%%%%%%%%%%%%%%%%%%%%%%%%%%%%%%%%%%%%%%%%%%%%%%%%%%%%%%%%%%%%%

\begin{thm} \label{MEN-2} Let $N\ge1$ and set
\[
\eta = \frac{1}{N/(\log(2N))}
	\sum_{\substack{ N^-<p\le N^+ \\  p\equiv 1\pmod m }} \sqrt{(p-N^-)(N^+-p)} ,
\]
and note that $\eta\ll1$ by the Brun-Titchmarsch inequality. For any fixed $\lambda>1$,
\[
\eta^\lambda \cdot \frac{N^2}{\phi(N) \log(2N) }  \ll M(N)  \ll \eta^{1/\lambda} \cdot \frac{N^2}{\phi(N)\log(2N)},
\]
the implied constants depending at most on $\lambda$.
\end{thm}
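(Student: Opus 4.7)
My approach handles the two bounds separately.

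\emph{Lower bound.} This is essentially trivial: $M(N)\ge M(G_{1,N})$ since the cyclic group $G_{1,N}=\Z/N\Z$ is among the groups in the decomposition, and Theorem~\ref{CDKS} applied with $m=1$, $k=N$ gives the stated estimate. Indeed, the parameter $\delta$ of Theorem~\ref{CDKS} collapses to $\eta$ when $m=1$ (the condition $p\equiv 1\pmod 1$ is vacuous and $\phi(1)=1$), while $|G_{1,N}|=N$ and $|\Aut(G_{1,N})|=\phi(N)$.

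\emph{Upper bound.} My plan combines Theorem~\ref{MEN-1} with Theorem~\ref{CDKS}, linked by the key pointwise inequality
\[
\delta_{m,k}\le \phi(m)\,\eta,
\]
which is obtained by simply dropping the congruence condition $p\equiv 1\pmod m$ in the definition of $\delta_{m,k}$. Fix $\lambda>1$. For a threshold $x$ to be chosen, Theorem~\ref{MEN-1} decomposes $M(N)$ as a sum over $m\le x$ with $m^{2}\mid N$, plus an error of $O(N^{2}/(x\phi(N)\log(2N)))$. Choosing $A$ large enough that $m\le k^{A}$ holds throughout the range $m\le x$, I then apply Theorem~\ref{CDKS} to each $M(\G)$. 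Combining the resulting bound with the key inequality and the standard estimate $\phi(k)\gg \phi(N)/m^{2}$ (from $N=m^{2}k$ and Euler's product) gives
\[
M(\G)\ll \eta^{1/\lambda}\cdot \frac{N^{2}}{m\,\phi(m)^{1-1/\lambda}\,\phi(N)\,\log(2N)}.
\]
Summing over admissible $m$ and using that $\sum_{m\ge 1}m^{-1}\phi(m)^{1/\lambda-1}$ converges (since $\lambda>1$), I arrive at the desired upper bound, absorbing the truncation error from Theorem~\ref{MEN-1} by a suitable choice of $x$.

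The principal obstacle I foresee is the edge case of extraordinarily small $\eta$, where the threshold $x$ needed to dominate the truncation error in Theorem~\ref{MEN-1} falls outside the range $m\le N^{A/(2A+1)}$ in which Theorem~\ref{CDKS} is valid. I expect this to be technical rather than substantive: one can dispatch the tail $m$ near $\sqrt{N}$ by crude direct bounds, noting in particular that $M(N)=0$ already when $(N^{-},N^{+})$ contains no primes, so that the problematic regime is quite restricted.
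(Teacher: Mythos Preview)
Your lower bound via $M(N)\ge M(G_{1,N})$ is fine and is essentially what the paper's argument specialises to in that direction.

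For the upper bound, however, your route through Theorems~\ref{MEN-1} and~\ref{CDKS} differs from the paper and leaves a real gap at precisely the edge case you flag. The problem is structural: the truncation error $O\!\bigl(N^2/(x\,\phi(N)\log(2N))\bigr)$ in Theorem~\ref{MEN-1} carries no factor of $\eta^{1/\lambda}$, and to absorb it you need $x\gg\eta^{-1/\lambda}$. But Theorem~\ref{CDKS} requires $m\le k^A$ (equivalently $m\le N^{A/(2A+1)}$) with the implied constant depending on $A$; since $\eta$ can be smaller than any fixed negative power of $N$ (for instance when a single prime sits within $o(1)$ of an endpoint of $(N^-,N^+)$), no fixed choice of $A$ closes the gap. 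Your proposed fix of ``crude direct bounds'' for $m$ near $\sqrt{N}$ does not address this, because the obstruction is not the tail in $m$ but the size of the truncation error itself. One \emph{can} patch this by reopening the proof of Theorem~\ref{CDKS} and tracking the discrepancy between $\log(2k)$ and $\log(2N)$ when $k$ is small, but that amounts to redoing the argument rather than citing the theorem as a black box.

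The paper sidesteps all of this by not decomposing $M(N)$ into groups at all. It simply records the direct analogue of Corollary~\ref{bounds for M(G)} for $M(N)$:
\[
\sum_{N^-<p<N^+}\sqrt{|D_N(p)|}\,\L(D_N(p))\ \ll\ M(N)\ \ll\ \sum_{N^-<p<N^+}\frac{|D_N(p)|^{3/2}}{\phi(|D_N(p)|)}\,\L(D_N(p)),
\]
a consequence of relation~\eqref{reduction to class number avg} with $n=1$. From there the proof is identical to that of Theorem~\ref{CDKS} with $m=1$: apply H\"older and Proposition~\ref{startwiththat}. No sum over $m$, no truncation, no edge case.
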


%%%%%%%%%%%%%%%%%%%%%%%%%%%%%%%%%%%%%%%%%%%%%%%%%%%%%%%%%%%%%%%%%%%%%%%%%%%%%%%%%%%%%%%%%%%%%%%%%%%%%%%%%%%%%%%%%%%%%%%%%%%%%%%%%%%%%%%%%%%%%%%%%%%%%%%%%%%%%%%%%%%%%%%%%%%%%%%%%%%%%%%

\begin{thm} \label{MEN-3} Fix $A>0$. For $x\ge1$, we have that
\als{
\frac{1}{x}\sum_{1<N\le x} \left| M(N)   - \frac{K(N)N^2}{\phi(N)\log N} \right|
	\ll_A \frac{x}{(\log x)^A} .
}
\end{thm}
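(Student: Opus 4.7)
The plan is to reduce Theorem~\ref{MEN-3} to Theorems~\ref{MEN-1} and~\ref{CDKS3} via the structural identity
\eq{MEN3-identity}{
\frac{K(N)}{\phi(N)} = \sum_{m^2 k = N} \frac{K(G_{m,k})}{|\Aut(G_{m,k})|}.
}
Multiplying by $N^2/\log N$ and using $|G_{m,k}| = N$, this says that the conjectured main term for $M(N)$ decomposes as a sum over divisor representations $N=m^2k$ of the conjectured main terms for $M(G_{m,k})$. I would establish \eqref{MEN3-identity} as a purely local identity: the Euler factors at $\ell \nmid N$ agree verbatim on the two sides, while at each $\ell \mid N$ one verifies a finite identity by summing over $a = \nu_\ell(m) \in \{0, 1, \ldots, \lfloor \nu_\ell(N)/2\rfloor\}$, using the standard formula for $|\Aut(\Z/\ell^a\Z \times \Z/\ell^b\Z)|$. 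The cases $N = p, p^2, p^3$ check out by hand and indicate the general pattern.

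Given \eqref{MEN3-identity}, I would write
\als{
M(N) - \frac{K(N) N^2}{\phi(N) \log N} = \sum_{m^2 k = N} \left( M(G_{m,k}) - \frac{K(G_{m,k}) |G_{m,k}|^2}{|\Aut(G_{m,k})| \log|G_{m,k}|} \right)
}
and split this sum at a parameter $X$. For the tail $m > X$, Theorem~\ref{MEN-1} gives $\sum_{m > X} M(G_{m, N/m^2}) \ll N^2/(X \phi(N) \log(2N))$. A matching bound for the tail of the conjectured main term follows from the lower bound $|\Aut(G_{m,k})| \gg m \phi(N)$ (a consequence of the explicit formula for the automorphism group of a finite abelian $\ell$-group), which yields each summand $\ll N^2/(m \phi(N) \log N)$. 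Summing over $m > X$ with $m^2 \mid N$ and then over $N \le x$, the tail contributes $\ll x^2 (\log x)^{O(1)}/X$ to the left-hand side of Theorem~\ref{MEN-3}.

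For the head $m \le X$, I would interchange the order of summation and apply Theorem~\ref{CDKS3} with its parameters $X$ and $x$ (legal provided $X \le x^{1/4-\epsilon}$):
\als{
\sum_{m \le X} \sum_{k \le x} \left| M(G_{m,k}) - \frac{K(G_{m,k}) |G_{m,k}|^2}{|\Aut(G_{m,k})| \log|G_{m,k}|} \right| \ll \frac{X x^2}{(\log x)^{A'}}.
}
The total error is $O(Xx^2/(\log x)^{A'} + x^2(\log x)^{O(1)}/X)$; taking $X = (\log x)^{A'/2}$ balances the two bounds, and choosing $A'$ sufficiently large in terms of $A$ yields the claimed $O(x^2/(\log x)^A)$.

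The principal obstacle will be the verification of the identity \eqref{MEN3-identity}, which amounts to a finite combinatorial identity at each prime divisor of $N$ involving the explicit formula for the $\ell$-part of $\Aut(G_{m,k})$. A secondary nuisance is making the matching tail bound for the conjectured main term precise; once the local lower bound $|\Aut(G_{m,k})| \gg m \phi(N)$ is in hand, this reduces to an elementary divisor-sum estimate. With these two local computations in place, the rest of the argument is a routine assembly of Theorems~\ref{MEN-1} and~\ref{CDKS3} with a logarithmic cutoff.
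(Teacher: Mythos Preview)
Your proposal is correct and follows essentially the same route as the paper's proof. The paper also reduces to the identity
\[
\frac{K(N)N}{\phi(N)}=\sum_{m^2k=N}\frac{K(G_{m,k})|G_{m,k}|}{|\Aut(G_{m,k})|},
\]
which it proves by using Lemma~\ref{Aut(G)} to rewrite $K(G_{m,k})|G_{m,k}|/|\Aut(G_{m,k})|$ and then checking the multiplicative identity one prime at a time via the explicit evaluation $S(\ell^v)=1-1/(\ell^v(\ell-1))$; this is exactly the ``finite combinatorial identity at each prime'' you describe.

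The only noticeable difference is in how Theorem~\ref{CDKS3} is invoked for the head. You apply it once with parameters $(X,x)$ and accept the extra factor $X$, then balance with the tail by taking $X=(\log x)^{A'/2}$. The paper instead splits $m\le(\log x)^A$ into dyadic blocks $2^j\le m<2^{j+1}$ and applies Theorem~\ref{CDKS3} on each block with $k\le x/4^j$, obtaining $\sum_j x^2/(8^j(\log x)^A)\ll x^2/(\log x)^A$ directly without any balancing. Both arguments work; the dyadic version is slightly cleaner since it avoids having to choose $A'$ in terms of $A$. A small caution: with only $|\Aut(G_{m,k})|\gg m\,\phi(N)$, summing $1/m$ over $m>X$ with $m^2\mid N$ picks up a harmless $\tau(N)$; it is tidier to use the stronger bound $|\Aut(G_{m,k})|\ge m\phi(m)\phi(N)$ (immediate from Lemma~\ref{Aut(G)}), after which $\sum_{m>X}1/(m\phi(m))\ll 1/X$ gives the tail of the main term matching Theorem~\ref{MEN-1} on the nose.
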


The present paper also includes an appendix (by Greg Martin and the second and fourth named authors) 
giving a probabilistic interpretation to the Euler factors arising in the constants $K(N)$ and $K(G)$ defined by~\eqref{define K(G)} and~\eqref{define K(N)}, respectively.
This interpretation is similar to the heuristic leading to the conjectural constants in related conjectures on properties of the reductions of a fixed global elliptic curve $E$ over the rationals 
(e.g., the Lang-Trotter conjectures~\cite{LT:1976} and the Koblitz~\cite{Kob:1988} conjecture) 
with the additional feature that the Euler factors at the primes $\ell$ dividing $N$ or $|G|$ are related to certain matrix counts over $\Z / \ell^e \Z$ for $e$ large enough.

%%%%%%%%%%%%%%%%%%%%%%%%%%%%%%%%%%%%%%%%%%%%%%%%%%%%%%%%%%%%%%%%%%%%%%%%%%%%%%%%%%%%%%%%%%%%%%%%%%%%%%%%%%%%%%%%%%%%%%%%%%%%%%%%%%%%%%%%%%%%%%%%%%%%%%%%%%%%%%%%%%%%%%%%%%%%%%%%%%%%%%%%%%

\subsection*{Acknowledgements} The work of the second and third authors was partially supported by the Natural Sciences and Engineering Research Council of Canada.
%Discovery Grants ... and 435272-2013, respectively. 
Finally, part of this work was completed while the first, third and fourth authors were postdoctoral fellows at the Centre de recherches math\'ematiques at Montr\'eal, which they would like to thank for the financial support and the pleasant working environment.

%%%%%%%%%%%%%%%%%%%%%%%%%%%%%%%%%%%%%%%%%%%%%%%%%%%%%%%%%%%%%%%%%%%%%%%%%%%%%%%%%%%%%%%%%%%%%%%%%%%%%%%%%%%%%%%%%%%%%%%%%%%%%%%%%%%%%%%%%%%%%%%%%%%%%%%%%%%%%%%%%%%%%%%%%%%%%%%%%%%%%%%%%%

\subsection*{Notation} Given a natural number $n$, we denote with $P^+(n)$ and $P^-(n)$ its largest and smallest prime factor, respectively, with the convention that $P^+(1)=1$ and $P^-(1)=\infty$. Moreover, we let $\tau_r(n)$ denote the coefficient of $1/n^s$ in the Dirichlet series $\zeta(s)^r$. In particular, $\tau_r(n)=r^{\omega(n)}$ for square-free integers $n$, where $\omega(n)$ denotes the number of distinct prime factors of $n$. In the special case when $r=2$, we simply write $\tau(n)$ in place of $\tau_2(n)$, which counts the number of divisors of $n$. We write $f*g$ to denote the Dirichlet convolution of the arithmetic functions $f$ and $g$, defined by $(f*g)(n)=\sum_{ab=n}f(a)g(b)$. As usual, given a Dirichlet character $\chi$, we write $L(s,\chi)$ for its Dirichlet series. In addition, we make use of the notation
\[
E(x,h;q) := \max_{(a,q)=1} \left|  \sum_{\substack{ x <p \le x + h  \\  p\equiv a\pmod q}} \log p  - \frac{h}{\phi(q)} \right| .
\]
Finally, for $d\in\Z$ that is not a square and for $z\ge1$, we let
\[
\L(d) =  L\left(1,\left(\frac{d}{\cdot}\right)\right) = \prod_{\ell} \left( 1- \frac{\leg{d}{\ell}}{\ell}\right)^{-1}
\quad\text{and}\quad
\L(d;z) = \prod_{\ell\le z} \left( 1- \frac{\leg{d}{\ell}}{\ell}\right)^{-1}.
\]

%%%%%%%%%%%%%%%%%%%%%%%%%%%%%%%%%%%%%%%%%%%%%%%%%%%%%%%%%%%%%%%%%%%%%%%%%%%%%%%%%%%%%%%%%%%%%%%%%%%%%%%%%%%%%%%%%%%%%%%%%%%%%%%%%%%%%%%%%%%%%%%%%%%%%%%%%%%%%%%%%%%%%%%%%%%%%%%%%%%%%%%%%%%%%%%%%%%%%%%%%%%%%%%%%%%%%%%%%%%%%%%%%%%%%%%%%%%%%%%%%%%%%%%%%%%%%%%%%%%%%%%%%%%%%%%%%%%%%%%%%%%%%%%%%%%%%%%%%%%%%%%%%%%%%%%%%%%%%%%%%%%%%%%%%%%%%%%%%%%%%%%%%%%%%%%%%%%%%%%%%%%%%%%%%%%%

\section{Outline of the proofs}\label{outline}

In this section, we outline the chief ideas that go into the proofs of our main results.  
However, most of our remarks concern the proofs of Theorems~\ref{CDKS} and~\ref{CDKS3}.
This is primarily because the remaining results are essentially corollaries of these theorems.  
In particular, the main ingredient in the proof of Theorem~\ref{MEN-1} is Theorem~\ref{CDKS}, and the main ingredients in the proof Theorem~\ref{MEN-3} are Theorems~\ref{CDKS3} and~\ref{MEN-1} together with a short computation.  
Theorem~\ref{MEN-2} is not truly a corollary, but its proof is essentially the same as that of Theorem~\ref{CDKS}.
The proof of Theorem~\ref{CDKS2} is somewhat different.  
The ideas involved in its proof are essentially the same as those used to show Theorem 1.6 of~\cite{CDKS1} together with an application of Theorem~\ref{CDKS}. 
All of this will be expounded further in Section~\ref{proofs}, where we complete the proofs of all six results.

%In this section we outline the main ideas that go into the proof of Theorems \ref{CDKS} and \ref{CDKS3}. We will then complete the proofs of these theorems as well as of the other results stated in the introduction in Section \ref{proofs}.
For the remainder of this section, we focus our attention on outlining the main ingredients in the proofs of Theorems~\ref{CDKS} and~\ref{CDKS3}.
Throughout, we fix a group $G=G_{m,k}=\Z/m\Z\times\Z/mk\Z$, and we set $N=|G|=m^2k$. Moreover, given a prime $p\equiv 1\pmod {m}$, we set
\begin{eqnarray} \label{def-dp}
d_{m,k}(p)= \frac{(p-1-N)^2-4N}{m^2}=\left(\frac{p-1}{m}-mk\right)^2-4k.
\end{eqnarray}
Often, when the dependence on $m$ and $k$ is clear from the context, we will simply write $d(p)$ in place of $d_{k,m}(p)$.
Our starting point is the following lemma, whose proof is based on Deuring's work \cite{Deu} and its generalization due to Schoof \cite{Schoof}. We shall give the details of its proof in Section \ref{deuring lemma}.

%%%%%%%%%%%%%%%%%%%%%%%%%%%%%%%%%%%%%%%%%%%%%%%%%%%%%%%%%%%%%%%%%%%%%%%%%%%%%%%%%%%%%%%%%%%%%%%%%%%%%%%%%%%%%%%%%%%%%%%%%%%%%%%%%%%%%%%%%%%%%%%%%%%%%%%%%%%%%%%%%%%%%%%%%%%%%%%%%%%%%%%

\begin{lma}\label{formula for M(G)}
For any $m,k\in\N$, we have that
\[
M(\G) = \sum_{\substack{N^-<p<N^+ \\ p \equiv 1 \pmod m }}\sum_{\substack{f^2\mid d(p) ,\,   (f,k)=1  \\
	d(p)/f^2\equiv 1,0\pmod 4 }}
	\frac{\sqrt{|d(p)|} \L(d(p)/f^2)}{2\pi f} .
\]
\end{lma}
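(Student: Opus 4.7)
The plan is to combine the Deuring--Schoof theory of endomorphism rings with the analytic class number formula for imaginary quadratic orders. First I would express $M_p(G)$, for each admissible prime $p$, as a weighted sum of ideal class numbers of orders in $\Q(\sqrt{d(p)})$; then I would match the sum parameters with those in the lemma; and finally I would apply the class number formula to rewrite class numbers as $L$-values.

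For $E(\F_p)\cong G_{m,k}$ to occur we need $\#E(\F_p)=N$, forcing the trace of Frobenius to be $a_p=p+1-N$; the Hasse bound then pins $p$ to $(N^-,N^+)$, and the $\F_p$-rationality of the full $m$-torsion forces $m\mid p-1$ via the Weil pairing. For ordinary $E$, Deuring's theorem (as refined by Schoof) supplies a bijection between $\F_p$-isomorphism classes of elliptic curves with prescribed $\F_p$-endomorphism ring $\mathcal{O}$ and the ideal class group of $\mathcal{O}$, together with the identification $\lvert\Aut_p(E)\rvert=\lvert\mathcal{O}^\times\rvert=:w(\mathcal{O})$; the at most one supersingular prime in $(N^-,N^+)$ (arising from $p\mid N-1$) contributes in a parallel way via Schoof's supersingular count. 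Consequently
\[
M_p(G)=\sum_{\substack{\Z[\pi]\subseteq\mathcal{O}\subseteq\mathcal{O}_K \\ \mathcal{O}/(\pi-1)\mathcal{O}\cong G_{m,k}}}\frac{h(\mathcal{O})}{w(\mathcal{O})},
\]
where $K=\Q(\sqrt{d(p)})$.

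I would next parameterize the orders $\mathcal{O}$ by an integer $f\ge 1$ with $f^2\mid d(p)$ and $d(p)/f^2\equiv 0,1\pmod 4$, declaring $\disc(\mathcal{O})=d(p)/f^2$: concretely, if $f_\pi$ denotes the conductor of $\Z[\pi]$ in $\mathcal{O}_K$, so that $f_\pi^2 d_K=a_p^2-4p=m^2 d(p)$, the conductor of $\mathcal{O}$ equals $c=f_\pi/(mf)$. To check when $\mathcal{O}/(\pi-1)\mathcal{O}\cong G_{m,k}$ I would compute the Smith normal form of multiplication by $\pi-1$ in a $\Z$-basis of $\mathcal{O}$; the first invariant factor works out to $\gcd(u-1,mf)$, where $u$ is the integer coefficient of $1$ in the expansion of $\pi$ over a standard basis of $\mathcal{O}_K$. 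Demanding this equal $m$ reduces to a coprimality condition on $f$ which, after using $p\equiv 1\pmod m$ together with the congruence $d(p)\equiv((p-1)/m)^2\pmod\ell$ valid for every prime $\ell\mid k$ (immediate from $d(p)=((p-1)/m-mk)^2-4k$), is exactly $(f,k)=1$.

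Finally, substituting the analytic class number formula $h(d)=w(d)\sqrt{|d|}\,\L(d)/(2\pi)$ with $d=d(p)/f^2$ turns each summand $h(\mathcal{O})/w(\mathcal{O})$ into $\sqrt{|d(p)|}\,\L(d(p)/f^2)/(2\pi f)$, and summing produces the identity claimed. The delicate step is the conductor-level matching in the previous paragraph: verifying that the group-theoretic condition $\mathcal{O}/(\pi-1)\mathcal{O}\cong G_{m,k}$ translates into the clean coprimality $(f,k)=1$ requires prime-by-prime analysis of the Smith normal form, separating the cases $\ell\mid k$ and $\ell\nmid k$, checking that $\ell\mid f$ with $\ell\mid k$ would force $E(\F_p)$ to contain the larger subgroup $(\Z/m\ell\Z)^2$ (and hence fail to be $G_{m,k}$), and managing the minor nuisance at $\ell=2$ coming from the parity of $d_K$ in the choice of integral basis.
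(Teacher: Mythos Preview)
Your approach is correct in outline and genuinely different from the paper's. The paper proceeds by M\"obius inversion: it writes $M_p(G)=\sum_{r^2\mid k}\mu(r)M_p(N;rm)$, invokes Schoof's identity $M_p(N;n)=H(D_N(p)/n^2)$ with the Kronecker class number, expands $H$ as a sum over $f$, and swaps the $r$- and $f$-sums. The M\"obius sum collapses to the condition $(f,s,(p-1)/m)=1$ where $k=s^2t$ with $t$ squarefree, and a short argument exploiting the discriminant constraint $d(p)/f^2\equiv 0,1\pmod 4$ upgrades this to the cleaner $(f,k)=1$. Your route instead parameterizes orders $\mathcal O\supseteq\Z[\pi]$ directly by their discriminant $d(p)/f^2$ and reads off the group $\mathcal O/(\pi-1)\mathcal O$ via the Smith normal form of multiplication by $\pi-1$; your claimed first invariant factor $\gcd(u-1,mf)$ is indeed correct (the two remaining matrix entries are divisible by $\gcd(u-1,mf)$, as one checks from $a_p-1-u=(u-1)+f_\pi\operatorname{Tr}(\omega)$ and $mf\mid f_\pi$), and the prime-by-prime reduction you sketch does recover $(f,k)=1$ for odd primes. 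The paper's inclusion--exclusion packaging avoids the explicit matrix computation and makes the $2$-adic case slightly more transparent, while your argument is more conceptual in that it never leaves the single order $\mathcal O$. Either way the genuinely delicate content is the same: one must use $d(p)\equiv((p-1)/m)^2\pmod\ell$ for $\ell\mid k$, together with $f^2\mid d(p)$ and the mod-$4$ discriminant condition, to rule out primes $\ell\mid(f,k)$, with a separate check at $\ell=2$.
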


%%%%%%%%%%%%%%%%%%%%%%%%%%%%%%%%%%%%%%%%%%%%%%%%%%%%%%%%%%%%%%%%%%%%%%%%%%%%%%%%%%%%%%%%%%%%%%%%%%%%%%%%%%%%%%%%%%%%%%%%%%%%%%%%%%%%%%%%%%%%%%%%%%%%%%%%%%%%%%%%%%%%%%%%%%%%%%%%%%%%%%%

For the proof of Theorem \ref{CDKS}, we shall use the following simplified but weaker version of Lemma \ref{formula for M(G)}.

%%%%%%%%%%%%%%%%%%%%%%%%%%%%%%%%%%%%%%%%%%%%%%%%%%%%%%%%%%%%%%%%%%%%%%%%%%%%%%%%%%%%%%%%%%%%%%%%%%%%%%%%%%%%%%%%%%%%%%%%%%%%%%%%%%%%%%%%%%%%%%%%%%%%%%%%%%%%%%%%%%%%%%%%%%%%%%%%%%%%%%%

\begin{cor}\label{bounds for M(G)} For any $m,k\in\N$, we have that
\[
\sum_{\substack{N^-<p<N^+ \\ p \equiv 1 \pmod m }}
		\sqrt{|d(p)|} \L(d(p))
				\ll M(\G) \ll
			\sum_{\substack{N^-<p<N^+ \\ p \equiv 1 \pmod m }} \frac{|d(p)|^{3/2}}{\phi(|d(p)|)}
				\L(d(p)).
\]
\end{cor}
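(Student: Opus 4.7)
The plan is to deduce both inequalities directly from Lemma~\ref{formula for M(G)} by analyzing the inner sum
$$S(p) := \sum_{\substack{f^2 \mid d(p),\,(f,k)=1 \\ d(p)/f^2 \equiv 0,1 \pmod 4}} \frac{\L(d(p)/f^2)}{2\pi f},$$
so that $M(\G) = \sum_p \sqrt{|d(p)|}\,S(p)$. Two elementary observations simplify the work: first, $d(p)=((p-1)/m-mk)^2-4k$ is an integer square minus a multiple of $4$, so $d(p) \equiv 0$ or $1 \pmod 4$ automatically; second, the Hasse bound forces $d(p)<0$ in the range $N^- < p < N^+$, so $\L(d(p)) > 0$ by Dirichlet's class number formula, and every summand of $S(p)$ is a non-negative real number.

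The lower bound is then immediate: the choice $f = 1$ satisfies all the constraints and contributes exactly $\L(d(p))/(2\pi)$ to $S(p)$. Retaining only this term, multiplying by $\sqrt{|d(p)|}$, and summing over $p$ yields the claimed lower bound (with explicit constant $1/(2\pi)$).

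For the upper bound, the main step is an Euler-product comparison of $\L(d(p)/f^2)$ with $\L(d(p))$. Writing $d(p) = D r^2$ with $D$ the fundamental discriminant of $\Q(\sqrt{d(p)})$, the Kronecker symbol factors as
$$\L(d(p)) = L(1, \chi_D) \prod_{\substack{\ell \mid r \\ \ell \nmid D}} \left(1 - \frac{\chi_D(\ell)}{\ell}\right),$$
and the analogous formula for $\L(d(p)/f^2)$ has $r/f$ in place of $r$. Their ratio is a product, over certain primes $\ell$ dividing $f$, of factors $(1 - \chi_D(\ell)/\ell)^{-1} \le \ell/(\ell-1)$; hence $\L(d(p)/f^2) \le (f/\phi(f))\,\L(d(p))$. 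Substituting this into $S(p)$ and relaxing the constraint $f^2 \mid d(p)$ to $f \mid d(p)$ gives
$$S(p) \ll \L(d(p)) \sum_{f \mid d(p)} \frac{1}{\phi(f)} \ll \frac{|d(p)|}{\phi(|d(p)|)}\,\L(d(p)),$$
the last step being the standard Euler-product estimate $\sum_{f \mid n} 1/\phi(f) \ll n/\phi(n)$, easily verified by comparing local factors. Multiplying by $\sqrt{|d(p)|}$ and summing over $p$ yields the upper bound. The one technically delicate point is the Euler-product comparison at primes dividing the conductor $D$, but these contribute only a bounded multiplicative factor that is harmlessly absorbed into the implied constant.
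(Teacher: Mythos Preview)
Your proof is correct and follows essentially the same route as the paper's own argument: the lower bound comes from retaining only the $f=1$ term (noting that $d(p)\equiv 0,1\pmod 4$ automatically), and the upper bound from the inequality $\L(d(p)/f^2)\le (f/\phi(f))\L(d(p))$ together with $\sum_{f\mid n}1/\phi(f)\ll n/\phi(n)$. The paper simply states the inequality $\L(d(p)/f^2)\le (f/\phi(f))\L(d(p))$ without passing through the fundamental discriminant; indeed, a direct prime-by-prime comparison of the Euler factors (using that $(d(p)/\ell)=0$ whenever $\ell\mid f$, since $f^2\mid d(p)$) gives the bound cleanly and makes your closing caveat about primes dividing $D$ unnecessary.
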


%%%%%%%%%%%%%%%%%%%%%%%%%%%%%%%%%%%%%%%%%%%%%%%%%%%%%%%%%%%%%%%%%%%%%%%%%%%%%%%%%%%%%%%%%%%%%%%%%%%%%%%%%%%%%%%%%%%%%%%%%%%%%%%%%%%%%%%%%%%%%%%%%%%%%%%%%%%%%%%%%%%%%%%%%%%%%%%%%%%%%%%

\begin{proof} For the lower bound, note that the term $f=1$ in Lemma \ref{formula for M(G)} always contributes to $M(\G)$, since $d(p)\equiv 0,1\mod 4$ for all $m,k$ and $p\equiv 1\mod m$. For the upper bound, notice that
\[
\L(d(p)/f^2) \le \frac{f}{\phi(f)} \L(d(p)) .
\]
Since
\[
\sum_{f|n} \frac{1}{\phi(f)}\ll \frac{n}{\phi(n)} ,
\]
the claimed upper bound follows.
\end{proof}

Evidently, Lemma \ref{formula for M(G)} and Corollary \ref{bounds for M(G)} reduce the estimation of $M(\G)$ to estimating an average of Dirichlet series evaluated at 1. In order to do so, we expand the Dirichlet series as an infinite sum and invert the order of summation by putting the sum over primes $p$ inside. For each fixed $n$ in the Dirichlet sum,
understanding this sum over primes involves understanding the distribution   of the set 
\eq{our set}{
\left\{\frac{p-1}{m}: N^-<p<N^+,\ p\equiv 1\mod m\right\}
}
in arithmetic progressions $a\mod b$, where the modulus $b=b(n)$ depends on $n$ and other parameters which are less essential.
Already when $b=m=1$, this problem is very hard and unsolved, even if we assume the validity of the Riemann Hypothesis. In order to limit the size of the moduli $b$ that are involved, we need to truncate the Dirichlet series that appear before inverting the order of summation. We could do this for each individual Dirichlet series, using character sum estimates such as the P\'olya-Vinogradov inequality or Burgess's bounds as in \cite{DS-MEN,DS-MEG}, but this would still leave us to deal with rather large moduli $b$. Instead, we use the following result, 
which implies that for {\it most} characters $\chi$, $L(1,\chi)$ can be approximated by a very short Euler product, and then by a sum over integers $n$ supported only on small primes.

%%%%%%%%%%%%%%%%%%%%%%%%%%%%%%%%%%%%%%%%%%%%%%%%%%%%%%%%%%%%%%%%%%%%%%%%%%%%%%%%%%%%%%%%%%%%%%%%%%%%%%%%%%%%%%%%%%%%%%%%%%%%%%%%%%%%%%%%%%%%%%%%%%%%%%%%%%%%%%%%%%%%%%%%%%%%%%%%%%%%%%%

\begin{lma} \label{lemmashortproduct}
Let $\alpha \geq 1$ and $Q\ge3$. There is a set $\mathcal{E}_\alpha(Q)\subset[1,Q]\cap\Z$ of at most $Q^{2/\alpha}$ integers such that if $\chi$ is a Dirichlet character modulo $q\le\exp\{(\log Q)^2\}$ whose conductor does not belong to $\mathcal{E}_\alpha(Q)$, then
\[
L(1,\chi) = \prod_{\ell\le (\log Q)^{8\alpha^2}} \left(1-\frac{\chi(\ell)}{\ell}\right)^{-1} \left(1 + O_\alpha\left(\frac1{(\log Q)^\alpha}\right)\right).
\]
\end{lma}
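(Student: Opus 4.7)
Plan. Taking logarithms of the Euler product of $L(s,\chi)$ and using the absolute convergence of prime-power contributions with exponent $\geq 2$, the desired estimate is equivalent to showing that
\[
\sum_{\ell > y}\frac{\chi(\ell)}{\ell} = O_\alpha\bigl((\log Q)^{-\alpha}\bigr)
\]
with $y = (\log Q)^{8\alpha^2}$, plus an error of $O(1/y)$ that is $o((\log Q)^{-\alpha})$. Passing from an induced character $\chi$ mod $q$ to its primitive inducer $\chi^*$ mod $q^* \leq Q$ alters this tail sum by at most $\sum_{\ell\mid q,\,\ell>y}\ell^{-1} \ll \omega(q)/y \ll (\log Q)^2/y$, also negligible; so the task reduces to showing the bound for all but at most $Q^{2/\alpha}$ primitive characters of conductor $q^*\leq Q$.

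I plan to split the tail at an intermediate scale $X$ and treat $\ell > X$ by analytic estimates and $y < \ell \leq X$ by moment estimates. For $\ell > X$: provided $L(s,\chi^*)$ has no zero in the region $\{\Re s \geq 1 - c/\log(q^*(|\Im s|+1)),\;|\Im s|\leq (\log Q)^{O(1)}\}$, the explicit formula for $\psi(t,\chi^*)=\sum_{n\leq t}\Lambda(n)\chi^*(n)$ and partial summation yield $|\sum_{\ell > X}\chi^*(\ell)/\ell| \ll \exp(-c'\sqrt{\log X})$ for $X \geq q^*$; this is $\ll (\log Q)^{-\alpha}$ once $X \geq \exp(C\alpha^2(\log\log Q)^2)$, which in particular can be chosen $\geq Q$. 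The log-free zero-density estimate
\[
\sum_{q^*\leq Q}\sum_{\chi^*\text{ prim}}N(\sigma,T,\chi^*) \ll (Q^2 T)^{c(1-\sigma)},
\]
together with Siegel's theorem for the $O(1)$ exceptional real characters, bounds the number of primitive characters failing this hypothesis, and those are placed in $\mathcal{E}_\alpha(Q)$.

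For the moment bound, raising $S(\chi^*):=\sum_{y<\ell\leq X}\chi^*(\ell)/\ell$ to the $k$-th power yields a Dirichlet polynomial of length $\leq X^k$, and the multiplicative large sieve gives
\[
\sum_{q^*\leq Q}\sum_{\chi^*\text{ prim}}|S(\chi^*)|^{2k} \ll (Q^2+X^k)\cdot k!\Bigl(\sum_{\ell>y}\ell^{-2}\Bigr)^k \ll (Q^2+X^k)\cdot\frac{k!}{(y\log y)^k}.
\]
By Markov with threshold $(\log Q)^{-\alpha}/2$, the ratio $(\log Q)^{2k\alpha}/(y\log y)^k \approx (\log Q)^{-(8\alpha^2-2\alpha)k}$ gives substantial savings per unit of $k$. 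To respect the constraint $X^k \leq Q^2$ (so the $Q^2$ term in the large sieve dominates) while letting $X$ match the analytic tail bound, I would split $(y,X]$ dyadically into $\ll \log Q$ subintervals $(2^i y, 2^{i+1} y]$ and apply the moment inequality with a subinterval-dependent exponent $k_i$, so that each subinterval contributes an exceptional set of size $\leq Q^{2/\alpha}/(2\log Q)$ and the total remains $\leq Q^{2/\alpha}/2$.

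The hard part will be this joint calibration of the parameters: the moment order $k_i$ on each dyadic range, the splitting point $X$, and the height $T$ of the zero-free region must be tuned simultaneously so that (i) each partial tail bound is at most $(\log Q)^{-\alpha}/2$, and (ii) the union of the short-range and long-range exceptional sets has at most $Q^{2/\alpha}$ primitive conductors. The generous choice $y = (\log Q)^{8\alpha^2}$—polynomial in $\log Q$ but with exponent growing quadratically in $\alpha$—is precisely what provides the slack needed to absorb the loss from dyadic summation on the moment side and the loss from the zero-density estimate on the analytic side.
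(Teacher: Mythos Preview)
Your approach is essentially correct in outline, but it takes a genuinely different and much more laborious route than the paper's. The paper's proof is two sentences: it invokes a known result of Elliott (in the form of \cite[Proposition~2.2]{GS}) which already gives, for all primitive characters $\psi$ of conductor in $[1,Q]$ outside an exceptional set of size at most $Q^{2/\alpha}$, the approximation
\[
L(1,\psi)=\prod_{\ell\le(\log Q)^{8\alpha^2}}\Bigl(1-\frac{\psi(\ell)}{\ell}\Bigr)^{-1}\Bigl(1+O_\alpha\bigl((\log Q)^{-\alpha}\bigr)\Bigr),
\]
and then passes from the inducing primitive $\psi$ to the induced $\chi\bmod q$ with $q\le\exp\{(\log Q)^2\}$ by observing that the extra Euler factors at primes $\ell\mid q$ with $\ell>(\log Q)^{8\alpha^2}$ contribute $O\bigl(\omega(q)/(\log Q)^{8\alpha^2}\bigr)=O\bigl((\log Q)^{2-8\alpha^2}\bigr)$ to the logarithm. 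That is the entire argument.

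What you are proposing is, in effect, a self-contained reproof of the Granville--Soundararajan/Elliott input itself: a moment bound via the multiplicative large sieve on dyadic blocks $(2^iy,2^{i+1}y]$ for the short range, plus an explicit-formula bound governed by a zero-free hypothesis (with exceptions controlled by a log-free zero-density estimate) for the long range. This is indeed the standard architecture behind such results and can be made to work; your observation that the exponent $8\alpha^2$ on $y$ supplies the slack needed to absorb both the dyadic losses and the zero-density losses is on point. One wording issue: the region you write, $\Re s\ge 1-c/\log(q^*(|\Im s|+1))$, is the \emph{classical} zero-free region, which (modulo a possible Siegel zero) holds for all $L$-functions and is not a hypothesis whose exceptions need counting. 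What you actually want to assume for the long-range tail is a wider rectangle of the shape $\sigma\ge 1-\eta$, $|t|\le T$, with $\eta$ and $T$ tuned to the problem; zero-density then bounds the number of characters violating \emph{that}. With this correction the plan goes through, but it is a substantial detour compared to simply citing \cite{GS}.
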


%%%%%%%%%%%%%%%%%%%%%%%%%%%%%%%%%%%%%%%%%%%%%%%%%%%%%%%%%%%%%%%%%%%%%%%%%%%%%%%%%%%%%%%%%%%%%%%%%%%%%%%%%%%%%%%%%%%%%%%%%%%%%%%%%%%%%%%%%%%%%%%%%%%%%%%%%%%%%%%%%%%%%%%%%%%%%%%%%%%%%%%

\begin{proof} By a classical result, essentially due to Elliott (see \cite[Proposition 2.2]{GS}), we know that there is a set $\mathcal{E}_\alpha(Q)$ of at most $Q^{2/\alpha}$ integers from $[1,Q]$ such that
\[
L(1,\psi) = \prod_{\ell \leq (\log{Q})^{8\alpha^2}} \left( 1 - \frac{\psi(\ell)}{\ell} \right)^{-1}
			\left( 1 + O\left(\frac{\alpha}{(\log Q)^\alpha}\right) \right)
\]
for all primitive characters $\psi$ of conductor in $[1,Q]\setminus\mathcal{E}_\alpha(Q)$. So if $\chi$ is a Dirichlet character modulo $q\le \exp\{(\log Q)^2\}$ induced by $\psi$ and the conductor of $\psi$ is in $[1,Q]\setminus \mathcal{E}_\alpha(Q)$, then
\als{
L(1, \chi)
	&= \prod_{\ell \mid q} \left( 1 - \frac{\psi(\ell)}{\ell} \right)
		\prod_{\ell \leq (\log{Q})^{8\alpha^2}} \left( 1 - \frac{\psi(\ell)}{\ell} \right)^{-1}
		\left( 1 + O\left(\frac{\alpha}{(\log Q)^\alpha}\right) \right)\\
	&= \prod_{\ell \mid q,\,\ell > (\log Q)^{8\alpha^2}  } \left( 1 - \frac{\psi(\ell)}{\ell} \right)
		\prod_{\ell \le (\log Q)^{8\alpha^2}  } \left( 1 - \frac{\chi(\ell)}{\ell} \right)^{-1}
		\left( 1 + O\left(\frac{\alpha}{(\log Q)^\alpha}\right) \right) .
}
Finally, note that
\[
\log\left(\prod_{\ell \mid q,\,\ell > (\log Q)^{8\alpha^2}  }\left( 1 - \frac{\psi(\ell)}{\ell} \right) \right)
	\ll \sum_{\ell \mid q,\,\ell > (\log Q)^{8\alpha^2}  } \frac{1}{\ell}
	\le \frac{\omega(q) }{(\log Q)^{8\alpha^2}} \ll \frac{1}{(\log Q)^{8\alpha^2-2}},
\]
since $\omega(q)\le \log q/\log 2\ll (\log Q)^2$, which completes the proof of the lemma.
\end{proof}

%%%%%%%%%%%%%%%%%%%%%%%%%%%%%%%%%%%%%%%%%%%%%%%%%%%%%%%%%%%%%%%%%%%%%%%%%%%%%%%%%%%%%%%%%%%%%%%%%%%%%%%%%%%%%%%%%%%%%%%%%%%%%%%%%%%%%%%%%%%%%%%%%%%%%%%%%%%%%%%%%%%%%%%%%%%%%%%%%%%%%%%

Expanding the short product in the above lemma leads to an approximation of $L(1,\chi)$ by a sum over $(\log Q)^A$-smooth integers, and we know that very few of them get $>Q^\epsilon$:

%%%%%%%%%%%%%%%%%%%%%%%%%%%%%%%%%%%%%%%%%%%%%%%%%%%%%%%%%%%%%%%%%%%%%%%%%%%%%%%%%%%%%%%%%%%%%%%%%%%%%%%%%%%%%%%%%%%%%%%%%%%%%%%%%%%%%%%%%%%%%%%%%%%%%%%%%%%%%%%%%%%%%%%%%%%%%%%%%%%%%%%%%%%%%%%%%%%%%%%%%%%%%%%%%%%%%

\begin{lma}\label{smooth} Let $f:\mathbb{N}\to\{z\in\C:|z|\le1\}$ be a completely multiplicative function. For $u\ge1$ and $x\ge10$ we have that
\[
\prod_{p\le x}\left(1-\frac{f(p)}p\right)^{-1}
	= \sum_{\substack{P^+(n)\le x\\n\le x^u}} \frac{f(n)}n+ O\left(\frac{\log x}{e^u}\right) .
\]
\end{lma}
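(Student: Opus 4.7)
The starting point is the absolutely convergent Euler product identity
\[
\prod_{p\le x}\left(1-\frac{f(p)}{p}\right)^{-1} = \sum_{P^+(n)\le x}\frac{f(n)}{n},
\]
which is valid since $f$ is completely multiplicative with $|f(p)|\le 1$. Subtracting the partial sum on the right-hand side of the claim reduces the lemma to establishing the tail bound
\[
\biggl|\sum_{\substack{P^+(n)\le x\\n>x^u}}\frac{f(n)}{n}\biggr| \ll \frac{\log x}{e^u}.
\]
Since $|f(n)|\le 1$ for all $n$, it suffices to prove this with $f$ replaced by the constant function $1$.

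The natural tool for the tail estimate is Rankin's trick. Setting $\sigma = 1/\log x$, we have $(n/x^u)^\sigma\ge 1$ whenever $n>x^u$, so
\[
\sum_{\substack{P^+(n)\le x\\ n>x^u}}\frac{1}{n}
	\le \frac{1}{x^{u\sigma}}\sum_{P^+(n)\le x}\frac{1}{n^{1-\sigma}}
	= e^{-u}\prod_{p\le x}\left(1-\frac{1}{p^{1-\sigma}}\right)^{-1}.
\]
Thus the lemma reduces to the perturbed Mertens estimate
\[
\prod_{p\le x}\left(1-\frac{1}{p^{1-\sigma}}\right)^{-1} \ll \log x.
\]

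To prove this, I would compare the perturbed product to the genuine Mertens product $\prod_{p\le x}(1-1/p)^{-1}\asymp \log x$ by writing
\[
\prod_{p\le x}\left(1-\frac{1}{p^{1-\sigma}}\right)^{-1}
	= \prod_{p\le x}\left(1-\frac{1}{p}\right)^{-1}\prod_{p\le x}\frac{p-1}{p-p^{\sigma}}.
\]
For $p\le x$, the inequality $\sigma\log p\le 1$ yields $p^\sigma-1 \le e\sigma\log p$; moreover, for all but finitely many small primes we have $p^\sigma\le p/2$, so $p-p^\sigma\ge p/2$ and therefore
\[
\log\frac{p-1}{p-p^\sigma} \le \frac{p^\sigma-1}{p-p^\sigma} \le \frac{2e\sigma\log p}{p}.
\]
Summing over $p\le x$ via Mertens' first theorem gives $\sum_{p\le x}(\log p)/p\ll \log x$, so the second product is at most $\exp(O(\sigma\log x)) = O(1)$. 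Combining the two bounds yields the desired $\log x$ estimate for the perturbed Euler product.

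The argument is standard and I expect no genuine obstacle. The only subtlety is that a crude application of Rankin's trick, using the trivial bound $p^\sigma\le e$ together with Mertens' second theorem $\sum_{p\le x}1/p = \log\log x+O(1)$, would give only $(\log x)^e$ rather than $\log x$ on the perturbed Euler product, which is too weak. The comparison to the Mertens product via the telescoping factor $(p-1)/(p-p^\sigma)$, which reveals the cancellation $\sigma\log p$ in the numerator, is precisely what produces the sharp factor $\log x$ demanded by the statement.
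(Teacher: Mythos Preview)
Your proof is correct and follows essentially the same route as the paper: subtract the truncated sum to isolate the tail over $x$-smooth $n>x^u$, apply Rankin's trick with exponent $\sigma=1/\log x$, and then show $\prod_{p\le x}(1-p^{-(1-\sigma)})^{-1}\ll\log x$ via the expansion $p^\sigma=1+O(\sigma\log p)$ together with Mertens-type estimates. The only cosmetic difference is that the paper passes directly to $\exp\bigl(\sum_{p\le x}p^{-(1-\sigma)}\bigr)$ and expands there, whereas you factor out the genuine Mertens product and bound the correction $\prod_{p\le x}(p-1)/(p-p^\sigma)$; the underlying computation is the same.
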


%%%%%%%%%%%%%%%%%%%%%%%%%%%%%%%%%%%%%%%%%%%%%%%%%%%%%%%%%%%%%%%%%%%%%%%%%%%%%%%%%%%%%%%%%%%%%%%%%%%%%%%%%%%%%%%%%%%%%%%%%%%%%%%%%%%%%%%%%%%%%%%%%%%%%%%%%%%%%%%%%%%%%%%%%%%%%%%%%%%%%%%%%%%%%%%%%%%%%%%%%%%%%%%%%%%%%

\begin{proof} We have that
\als{
\left| \prod_{p\le x}\left(1-\frac{f(p)}p\right)^{-1}
	- \sum_{\substack{P^+(n)\le x\\n\le x^u}} \frac{f(n)}n \right|
	= \left| \sum_{\substack{P^+(n)\le x\\ n > x^u }} \frac{f(n)}n \right|
	&\le \frac{1}{e^u} \sum_{P^+(n)\le x}  \frac{1}{n^{1-1/\log x}} \\
	&\ll \frac{1}{e^u} \exp\left\{\sum_{p\le x} \frac{1}{p^{1-1/\log x}} \right\} .
}
So using the formula $p^{1/\log x}=1+O(\log p/\log x)$ and the prime number theorem, we obtain the claimed result.
\end{proof}

%%%%%%%%%%%%%%%%%%%%%%%%%%%%%%%%%%%%%%%%%%%%%%%%%%%%%%%%%%%%%%%%%%%%%%%%%%%%%%%%%%%%%%%%%%%%%%%%%%%%%%%%%%%%%%%%%%%%%%%%%%%%%%%%%%%%%%%%%%%%%%%%%%%%%%%%%%%%%%%%%%%%%%%%%%%%%%%%%%%%%%%%%%%%%%%%%%%%%%%%%%%%%%%%%%%%%

Combining Lemmas \ref{lemmashortproduct} and \ref{smooth}, we may replace $L(1,\chi)$ by a very short sum for most characters $\chi$, which means that we only need information for the distribution of the set \eqref{our set} for very small moduli. This leads to the following fundamental result, which is an improvement of Theorem \ref{meg-rephrased}. It will be proven in Section \ref{approx}.

%%%%%%%%%%%%%%%%%%%%%%%%%%%%%%%%%%%%%%%%%%%%%%%%%%%%%%%%%%%%%%%%%%%%%%%%%%%%%%%%%%%%%%%%%%%%%%%%%%%%%%%%%%%%%%%%%%%%%%%%%%%%%%%%%%%%%%%%%%%%%%%%%%%%%%%%%%%%%%%%%%%%%%%%%%%%%%%%%%%%%%%

\begin{thm}\label{approximation of M(G)} Fix $\alpha\ge1$ and $\epsilon\le 1/3$, and consider integers $m$ and $k$ with $1\le m\le k^{\alpha}$ and $k$ large enough so that
$k^{\frac{1}{2}-\epsilon} \ge(\log k)^{\alpha+2}$. Set $G=G_{m,k}$, and consider $h\in[mk^{\epsilon},m \sqrt{k}/(\log k)^{\alpha+2}]$. Then
\als{
M(G) &=  \frac{K(G) |G|^2}{|\Aut (G)|\log|G|}
	 + O_{\alpha,\epsilon} \left(\frac{k}{(\log k)^{\alpha}}
	 	+ \frac{\sqrt{k}}{h} \sum_{q\le k^{\epsilon} } \tau_3(q)
				 \int_{N^-}^{N^+} E(y,h; qm ) \dee y\right),
}
where $K(G)$ is defined by \eqref{define K(G)}.
\end{thm}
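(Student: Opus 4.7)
The starting point is Lemma~\ref{formula for M(G)}. For any squarefree $d$ and $f\ge1$, the values $\mathcal{L}(d)$ and $\mathcal{L}(d/f^2)$ differ only at Euler factors at primes $\ell\mid f$, each a bounded explicit local function of $d\pmod\ell$. I would exploit this to rearrange the inner sum over $f$ in Lemma~\ref{formula for M(G)} and absorb it into a bounded multiplicative weight $W(p)$, reducing the problem to estimating
\[
S:=\sum_{\substack{N^-<p<N^+\\ p\equiv 1\pmod m}}\sqrt{|d(p)|}\,\mathcal{L}(d(p))\,W(p).
\]

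Next I would apply Lemmas~\ref{lemmashortproduct} and~\ref{smooth} with $Q=k^{\epsilon}$ to approximate $\mathcal{L}(d(p))$. Since the conductor of the Kronecker character $(d(p)/\cdot)$ is $\ll k\ll\exp\{(\log Q)^2\}$, Lemma~\ref{lemmashortproduct} replaces $\mathcal{L}(d(p))$ by a short Euler product up to a multiplicative $1+O((\log k)^{-\alpha})$, unless the conductor lies in the exceptional set $\mathcal{E}_\alpha(Q)$. A mean-value/large-sieve argument combined with the polynomial structure of $d(p)=((p-1)/m-mk)^2-4k$ is needed to show that the $p$ whose associated conductor lies in $\mathcal{E}_\alpha(Q)$ contribute at most $O(k/(\log k)^{\alpha})$, which forces $\alpha$ to be chosen sufficiently large in terms of $\epsilon$. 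Lemma~\ref{smooth}, with $u\asymp\alpha\log\log k$, then expands the short Euler product as a sum $\sum_n c(n)/n$ over $(\log k)^{8\alpha^{2}}$-smooth integers $n\le k^{\epsilon}$ with a bounded multiplicative $c(n)$, at the cost of a further $O((\log k)^{-\alpha})$ tail error.

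After these reductions $S$ equals, up to admissible errors, a linear combination of inner sums
\[
\sum_{\substack{N^-<p<N^+\\ p\equiv 1\pmod m}}\leg{d(p)}{n}\sqrt{|d(p)|}\,W(p),
\]
each weighted by $c(n)/n$. For each $n$ the factor $\leg{d(p)}{n}W(p)$ depends on $d(p)$ only modulo some $q\ll n\le k^{\epsilon}$, and since $d(p)$ is polynomial in $p$, this reduces the inner sum to a linear combination of prime counts in arithmetic progressions $p\equiv a\pmod{qm}$ on the Hasse interval. I would then partition $(N^-,N^+)$ into subintervals of length $h$, on each of which $\sqrt{|d(p)|}$ is constant up to an additive error $O(h/m)$ by Taylor expansion. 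Replacing $\sqrt{|d(p)|}$ by its value $\asymp\sqrt k$ at the left endpoint of each subinterval and applying the definition of $E(y,h;qm)$ yields the expected main term $h/\phi(qm)$ per interval plus an error. Summing $\sqrt k$ times this error over the $\asymp m\sqrt k/h$ subintervals and collecting those $n$ sharing a common associated modulus $q$ (the factor $\tau_3(q)$ arising because the smooth expansion encodes a triple Dirichlet convolution) reproduces exactly the error term displayed in the theorem.

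The principal obstacle will be identifying the main term — a sum over $n$ and $q$ of the expected $h/\phi(qm)$ per arithmetic progression, weighted by $c(n)$, $W(p)$, and $\sqrt{|d(y)|}$ — with the target $K(G)|G|^{2}/(|\Aut(G)|\log|G|)$. Prime by prime one must recognise the three Euler products in~\eqref{define K(G)}: at primes $\ell\nmid N$ the factor $\leg{N-1}{\ell}^{2}$ arises from averaging $\leg{((p-1)/m-mk)^2-4k}{\ell}^{2}$ over residues of $p\pmod\ell$ with $p\equiv 1\pmod m$, while at primes $\ell\mid m$ and at primes with $\ell\mid k$, $\ell\nmid m$, the local factors come from the reorganisation of the $f$-sum and from $W$. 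The specified range $mk^{\epsilon}\le h\le m\sqrt k/(\log k)^{\alpha+2}$ is exactly what is needed to make $qm\le h$ (so that the main term in each short interval is genuinely extractable) compatible with $h$ small enough that both the Taylor errors and the other losses absorb into the final $k/(\log k)^{\alpha}$ budget.
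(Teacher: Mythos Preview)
Your outline has the right high-level shape, but there are two concrete errors and one major omission.

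First, applying Lemma~\ref{lemmashortproduct} with $Q=k^{\epsilon}$ does not work. The conductor of $\leg{d(p)}{\cdot}$ can be as large as $|d(p)|\le 4k$, and the short-product approximation in that lemma is only valid for characters whose conductor lies in $[1,Q]$ (the proof invokes Elliott's result on primitive characters of conductor $\le Q$; the statement's phrasing ``conductor does not belong to $\mathcal{E}_\alpha(Q)$'' implicitly presumes conductor $\le Q$). With $Q=k^{\epsilon}$ you simply have no control over the vast majority of $p$. The paper takes $Q=4k$ (with $5\alpha$ in place of $\alpha$), so the Euler product runs up to $z=(\log 4k)^{200\alpha^2}$; only \emph{afterwards} does Lemma~\ref{smooth} truncate the resulting smooth sum at $k^{\epsilon}$. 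The parameter $k^{\epsilon}$ is the cutoff for $n$ in Lemma~\ref{smooth}, not the $Q$ in Lemma~\ref{lemmashortproduct}. Relatedly, the exceptional set is handled not by a large-sieve argument but by elementary divisor counting: if $d(p)=d_1b^2$ with $|d_1|\in\mathcal{E}$, then $(j-mk)^2-d_1b^2=4k$, so each exceptional $d_1$ captures $\ll\tau(4k)$ values of $p$.

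Second, the inner sum over $f$ cannot be absorbed into a single ``bounded multiplicative weight $W(p)$''. The ratio $\mathcal{L}(d(p)/f^2)/\mathcal{L}(d(p))$ depends on $d(p)\bmod f$, not just on $p$ through a fixed multiplicative pattern, and the resulting sum $\sum_{f^2\mid d(p)}1/\phi(f)$ is only bounded by $|d(p)|/\phi(|d(p)|)$, which is unbounded. The paper keeps the $f$-sum explicit: it writes $f=2^vg$ with $g$ odd, extracts the Euler factors at primes dividing $k$ and at $2$ by hand (introducing the auxiliary sums over $a\mid k$, over $v$, and over $r\in\{0,1,4,5\}$), and retains a separate sum over $g$ with the weight $\rho(g,d(p)/g^2)$. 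Each of $a,g,2^v$ is then truncated at $k^{\epsilon}$ via divisor bounds. This layered decomposition is precisely what makes the main term accessible: after fixing congruence classes $b\bmod g$, $c\bmod n$, $r\bmod 8$, one applies the prime-sum Lemma~\ref{prime sum} (which carries out your interval-partition idea in a sharper integrated form) and is left with a product of local densities that Proposition~\ref{odd primes prop}, Corollary~\ref{formula for P(ell)}, and the $2$-adic Lemmas~\ref{prime 2 lma1}--\ref{prime 2 lma2} identify with $K(G)|G|/|\Aut(G)|$. These local computations, which you correctly flag as the principal obstacle, constitute the bulk of the proof and cannot be bypassed by repackaging into a weight $W$.
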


%%%%%%%%%%%%%%%%%%%%%%%%%%%%%%%%%%%%%%%%%%%%%%%%%%%%%%%%%%%%%%%%%%%%%%%%%%%%%%%%%%%%%%%%%%%%%%%%%%%%%%%%%%%%%%%%%%%%%%%%%%%%%%%%%%%%%%%%%%%%%%%%%%%%%%%%%%%%%%%%%%%%%%%%%%%%%%%%%%%%%%%

Even though we cannot estimate the error term for any given values of $m$ and $k$, we can do so if we average over $m$ and $k$ using the following result, which is a consequence of Theorem 1.1 in \cite{Kou}.

%%%%%%%%%%%%%%%%%%%%%%%%%%%%%%%%%%%%%%%%%%%%%%%%%%%%%%%%%%%%%%%%%%%%%%%%%%%%%%%%%%%%%%%%%%%%%%%%%%%%%%%%%%%%%%%%%%%%%%%%%%%%%%%%%%%%%%%%%%%%%%%%%%%%%%%%%%%%%%%%%%%%%%%%%%%%%%%%%%%%%%%

\begin{lma}\label{bv-short}Fix $\epsilon>0$ and $A\ge1$. For $x\ge h\ge2$ and $1\le Q^2\le h/x^{1/6+\epsilon}$, we have that
\[
\int_x^{2x}\sum_{q\le Q} E(y,h;q) \dee y \ll\frac{xh}{(\log x)^A}.
\]
If, in addition, the Riemann hypothesis for Dirichlet $L$-functions is true, then the above estimate holds when $1\le Q^2\le h/x^\epsilon$.
\end{lma}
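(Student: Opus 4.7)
The plan is to deduce Lemma \ref{bv-short} directly from [Kou, Theorem 1.1], which is a hybrid short-interval Bombieri--Vinogradov estimate designed exactly for this kind of application. First I would recall that, for fixed $A\ge 1$ and $\epsilon>0$, [Kou, Theorem 1.1] asserts, under the hypothesis $Q^2\le h/x^{1/6+\epsilon}$ (and under GRH under the weaker hypothesis $Q^2\le h/x^\epsilon$), a bound of the form
\[
\sum_{q\le Q}\int_x^{2x}\max_{(a,q)=1}\biggl|\sum_{\substack{y<n\le y+h\\ n\equiv a\pmod{q}}}\Lambda(n)-\frac{h}{\phi(q)}\biggr|\,dy \;\ll_{A,\epsilon}\; \frac{xh}{(\log x)^A},
\]
where $\Lambda$ denotes the von Mangoldt function. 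By Fubini's theorem (all integrands being non-negative), the left-hand side of the conclusion of Lemma \ref{bv-short} equals $\sum_{q\le Q}\int_x^{2x}E(y,h;q)\,dy$, so it suffices to match this quantity with the expression appearing in the cited theorem.

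The only discrepancy between $E(y,h;q)$ and the corresponding summand above is the replacement of $\Lambda(n)$ by $(\log p)\mathbf{1}_{n=p}$. The two differ only via the contribution of proper prime powers $n=p^k$ with $k\ge 2$, which for $n\in(y,y+h]\subset(x,3x]$ forces $p\le(3x)^{1/2}$. Summing the differences trivially, their total contribution to the left-hand side over all $q\le Q$ and all $y\in[x,2x]$ is $O(Qx^{3/2+o(1)})$, which is readily absorbed into $xh/(\log x)^A$ throughout the range of parameters permitted by Lemma \ref{bv-short}; in the degenerate regime where $h$ is so small that the bound becomes comparable to the trivial estimate $E(y,h;q)\ll\log x$, an analogous routine check handles that case.

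The main obstacle, such as it is, is simply bookkeeping: verifying that the placement of the maximum over residue classes coprime to $q$, the range of permissible $Q$, and the integration interval for $y$ in [Kou, Theorem 1.1] all line up with those demanded by Lemma \ref{bv-short}, in both the unconditional and GRH settings. Since [Kou, Theorem 1.1] is formulated precisely for averages of this type, the matching is essentially immediate and no substantive new analytic input is required at this stage of the paper.
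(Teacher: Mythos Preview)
Your approach is exactly the paper's: the lemma is stated as a direct consequence of Theorem~1.1 in \cite{Kou}, with no further argument given. Your added remarks about passing from $\Lambda(n)$ to $\log p$ are more detail than the paper supplies.

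One small correction: your crude bound $O(Qx^{3/2+o(1)})$ for the prime-power contribution is obtained by replacing $h$ by $x$, and as stated it is \emph{not} absorbed by $xh/(\log x)^A$ when $h$ is small (e.g.\ $h=x^{1/6+\epsilon}$, $Q=1$). The natural bound---integrating $\sum_{y<p^k\le y+h,\,k\ge2}\log p$ over $y\in[x,2x]$ via Fubini---is $O(Qh\sqrt{x}\log x)$, and since $Q\le x^{1/2-\epsilon/2}$ in both regimes this is indeed $\ll xh/(\log x)^A$. With that adjustment your hedge about a ``degenerate regime'' is unnecessary.
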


%%%%%%%%%%%%%%%%%%%%%%%%%%%%%%%%%%%%%%%%%%%%%%%%%%%%%%%%%%%%%%%%%%%%%%%%%%%%%%%%%%%%%%%%%%%%%%%%%%%%%%%%%%%%%%%%%%%%%%%%%%%%%%%%%%%%%%%%%%%%%%%%%%%%%%%%%%%%%%%%%%%%%%%%%%%%%%%%%%%%%%%

Theorem \ref{approximation of M(G)} and Lemma \ref{bv-short} lead to a proof of Theorem \ref{CDKS3} in a fairly straightforward way as we will see in Section \ref{proofs}.

Next, we turn to the proof of Theorem \ref{CDKS}. 
Using Corollary~\ref{bounds for M(G)} and H\"older's inequality, we reduce the proof of this result to that of controlling sums of the form
\eq{CDKS - key sum}{
\sum_{\substack{N^-<p<N^+ \\ p \equiv 1 \pmod m }}
	\left(\frac{|d(p)|}{\phi(|d(p)|) }\right)^s \L(d(p))^r ,
}
where we take $r>0$ to prove the implicit upper bound and $r<0$ for the lower bound.
%\fcom{(Is there a typo here? Should it be "$s \geq 0$" and $r \in \mathbb R$?)}
Nevertheless, we only seek an upper bound for the sum in \eqref{CDKS - key sum}, even for the lower bound in Theorem \ref{CDKS}. Therefore we can replace the sum over primes with a sum over almost primes and use sieve methods to detect the latter kind of integers. More precisely, we will majorize the characteristic function of primes $\le 2N$ by a convolution $\lambda*1$, where $\lambda$ is a certain truncation of the M\"obius function. This will be done using the {\it fundamental lemma of sieve methods}, which we state below in the form found in \cite[Lemma 5]{FI}. We could have also used Selberg's sieve, but the calculations are actually simpler when using Lemma \ref{lemma-FI}.

%%%%%%%%%%%%%%%%%%%%%%%%%%%%%%%%%%%%%%%%%%%%%%%%%%%%%%%%%%%%%%%%%%%%%%%%%%%%%%%%%%%%%%%%%%%%%%%%%%%%%%%%%%%%%%%%%%%%%%%%%%%%%%%%%%%%%%%%%%%%%%%%%%%%%%%%%%%%%%%%%%%%%%%%%%%%%%%%%%%%%%%%%%

\begin{lma}\label{lemma-FI}
Let $y\ge2$ and $D=y^u$ with $u\ge2$. There exist two arithmetic functions $\lambda^\pm:\mathbb{N}\to[-1,1]$, supported on $\{d\in\mathbb{N}:P^+(d)\le y,\,d\le D\}$, for which
\[
\begin{cases}
	(\lambda^-*1)(n)=(\lambda^+*1)(n)=1		&\text{if}\ P^-(n)>y,\\
	(\lambda^-*1)(n)\le0\le(\lambda^+*1)(n)		&\text{otherwise}.
\end{cases}
\]
Moreover, if $g:\mathbb{N}\to\mathbb{R}$ is a multiplicative function with $0\le g(p)\le\min\{2,p-1\}$ for all primes $p\le y$, and $\lambda\in\{\lambda^+,\lambda^-\}$, then
\[
\sum_d  \frac{ \lambda(d)g(d) }{d}
	=  (1+O(e^{-u})) \prod_{p\le y} \left(1-\frac{g(p)}p\right).
\]
\end{lma}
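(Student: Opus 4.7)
The plan follows the standard Brun--Rosser--Iwaniec construction of combinatorial sieve weights; a complete treatment appears in \cite[\S 6]{FI}, and I outline the strategy in three steps. First I would define $\lambda^\pm$ explicitly: order the primes in $[2,y]$ decreasingly as $p_1>p_2>\cdots>p_r$, and for each squarefree $y$-smooth $d=p_{i_1}\cdots p_{i_k}$ with $i_1<\cdots<i_k$, declare $d$ to lie in the support of $\lambda^+$ (respectively $\lambda^-$) exactly when certain Rosser-type inequalities on the partial products $p_{i_1},\, p_{i_1}p_{i_2},\,\ldots,\,p_{i_1}\cdots p_{i_k}$ hold at every index $j\le k$ of a prescribed parity; on this support, set $\lambda^\pm(d):=\mu(d)$. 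The support conditions $P^+(d)\le y$, $d\le D$, and $|\lambda^\pm|\le 1$ are then automatic from the construction.

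Second, I would verify the sign/support property $(\lambda^-*1)(n)\le \mathbf{1}_{P^-(n)>y}\le(\lambda^+*1)(n)$. When $P^-(n)>y$, only the divisor $d=1$ contributes and both sides equal $1$; when $P^-(n)\le y$, writing $p:=P^-(n)$, I would induct on $\Omega(n)$, invoking Buchstab's identity $\sum_{d\mid m,\,P^+(d)\le y}\mu(d)=0$ (valid whenever $P^-(m)\le y$) so that the parity condition in the selection rule translates directly into the desired sign inequality.

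For the quantitative estimate, I would set $V(y):=\prod_{p\le y}(1-g(p)/p)$ and compare it with the truncated sum $\sum_d \lambda^\pm(d)g(d)/d$ by iterating Buchstab's identity. Each iteration multiplies the remaining error by at most $\sum_{p\le y}g(p)/p=O(\log\log y)$ (using the hypothesis $g(p)\le 2$), while the Rosser cutoff $d\le D=y^u$ constrains the number of productive iterations to roughly $u$. A Stirling-type bound on the resulting $1/r!$ factor then yields an overall error of $O(V(y)\,e^{-u})$, which is the claimed estimate.

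The main obstacle is securing the sharp exponential saving $e^{-u}$, rather than the weaker $O(u^{-cu})$ that a naive Brun truncation (keeping only $d$ with at most a fixed number of prime factors) would produce. Extracting the full exponential decay relies crucially on the Rosser--Iwaniec choice of combinatorial selection rule together with a careful induction on the Buchstab expansion depth; this combinatorial bookkeeping is the technical heart of \cite[Lemma 5]{FI} and is where essentially all the work lies.
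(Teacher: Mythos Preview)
The paper does not actually prove this lemma: it is quoted as the fundamental lemma of sieve methods ``in the form found in \cite[Lemma 5]{FI}'' and used as a black box. Your outline of the Rosser--Iwaniec $\beta$-sieve construction is precisely the standard route to that cited result, so in substance you and the paper are pointing to the same source and the same argument.

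One correction to your final paragraph: the bound $O(u^{-cu})=O(e^{-cu\log u})$ is \emph{stronger}, not weaker, than $O(e^{-u})$ for large $u$. In fact the sharpest forms of the fundamental lemma give an error of shape $u^{-u(1+o(1))}$, and the $e^{-u}$ in the stated lemma is merely a convenient weakening. The genuine advantage of the Rosser--Iwaniec weights over the naive Brun pure sieve is not the quality of the error term per se, but the control on the \emph{level}: the combinatorial selection rule guarantees that every $d$ in the support satisfies $d\le D=y^u$, whereas the pure Brun truncation at $r$ prime factors only gives $d\le y^r$ with $r$ chosen independently of $D$. That level control is exactly what the application in Section~\ref{proof of Prop startwiththat} needs.
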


%%%%%%%%%%%%%%%%%%%%%%%%%%%%%%%%%%%%%%%%%%%%%%%%%%%%%%%%%%%%%
%%%%%%%%%%%%%%%%%%%%%%%%%%%%%%%%%%%%%%%%%%%%%%%%%%%%%%%%%%%%%
%%%%%%%%%%%%%%%%%%%%%%%%%%%%%%%%%%%%%%%%%%%%%%%%%%%%%%%%%%%%%

Combining Lemmas \ref{lemmashortproduct} and \ref{lemma-FI}, we are led to following key result, which will proven in Section \ref{proof of Prop startwiththat}. As we will see in the same section, Theorem \ref{CDKS} is an easy consequence of this intermediate result.

%%%%%%%%%%%%%%%%%%%%%%%%%%%%%%%%%%%%%%%%%%%%%%%%%%%%%%%%%%%%%%%%%%%%%%%%%%%%%%%%%%%%%%%%%%%%%%%%%%%%%%%%%%%%%%%%%%%%%%%%%%%%%%%%%%%%%%%%%%%%%%%%%%%%%%%%%%%%%%%%%%%%%%%%%%%%%%%%%%%%%%%%%%

\begin{prop}\label{startwiththat} Let $m,k\in\N$ and set $N=m^2k$. For any $r\in\mathbb{R}$ and $s\ge0$, we have that
\[
\sum_{\substack{N^-<p<N^+ \\ p \equiv 1 \pmod m }}
	\left(\frac{|d(p)|}{\phi(|d(p)|) }\right)^s \L(d(p))^r
	  \ll_{r,s} \left(\frac{k}{\phi (k)}\right)^r \frac{\sqrt{N}}{\phi(m)\log(2k)}.
\]
\end{prop}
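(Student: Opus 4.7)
The plan is to combine a sieve majorant for the primes in the Hasse interval with the short Euler-product approximation of $L(1,\chi)^r$ provided by Lemma~\ref{lemmashortproduct}. I begin by parametrizing every prime $p \equiv 1 \pmod m$ with $N^- < p < N^+$ uniquely as $p = N + 1 + mt$ for an integer $t$ with $|t| \leq 2\sqrt k$, so that $d(p) = t^2 - 4k$. The sum to be estimated then becomes
\[
S := \sum_{\substack{|t| \leq 2\sqrt k \\ N+1+mt \text{ prime}}} F(t), \qquad F(t) := \Bigl(\tfrac{|t^2-4k|}{\phi(|t^2-4k|)}\Bigr)^s \L(t^2-4k)^r.
\]
Since the target bound $\sqrt N/(\phi(m)\log(2k)) = m\sqrt k/(\phi(m)\log(2k))$ is, up to the factor $(k/\phi(k))^r$, the Brun--Titchmarsh count of these primes, it will suffice to show that $F(t)$ contributes only the claimed constant factor on average.

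Next I would apply Lemma~\ref{lemma-FI} with sifting parameter $y = k^\eta$ for a fixed small $\eta > 0$ and sieve level $D = y^u$ to majorize the indicator of primality of $N+1+mt$ by $(\lambda^+ * 1)(N+1+mt)$. Since $\gcd(N+1,m) = 1$, only sieve divisors $d$ coprime to $m$ contribute, and each such $d$ pins $t$ to a unique residue class $t_d \pmod d$. Swapping the order of summation yields
\[
S \leq \sum_{\substack{d \leq D,\ (d,m)=1 \\ P^+(d) \leq y}} \lambda^+(d) \sum_{\substack{|t| \leq 2\sqrt k \\ d \mid t - t_d}} F(t) + O(1).
\]

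Third, I would replace $F(t)$ by short, easily summable expressions. For the $L$-factor I would apply Lemma~\ref{lemmashortproduct} with $Q \asymp k$ and $\alpha$ chosen large in terms of $r$ and $s$, replacing $\L(t^2-4k)^r$ by the truncated Euler product $\prod_{\ell \leq z}(1 - \leg{t^2-4k}{\ell}/\ell)^{-r}$ with $z = (\log k)^{8\alpha^2}$, at the cost of at most $O(k^{2/\alpha})$ exceptional values of $t$. The short product is then expanded via Lemma~\ref{smooth} as a bounded Dirichlet polynomial of the form $\sum_n \tau_r(n) \leg{t^2-4k}{n}/n$ supported on $z$-smooth $n$ of bounded size. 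Similarly I would write $(|t^2-4k|/\phi(|t^2-4k|))^s = \sum_{e \mid t^2-4k} g_s(e)$ for a suitable multiplicative function $g_s$ that decays rapidly in $e$, and truncate $e$ at the same scale. The exceptional $t$ can be bounded trivially using $\L(d)^r \ll_r |d|^{\epsilon}$ (via Siegel's theorem when $r < 0$) together with $|d|/\phi(|d|) \ll \log|d|$; taking $\alpha$ sufficiently large makes their total contribution negligible.

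After substitution, $S$ becomes a quadruple sum over $d$, $e$, $n$, and $t$ in which the innermost count of $|t| \leq 2\sqrt k$ lying in a prescribed residue class modulo some composite modulus is $\sqrt k/(\text{modulus}) + O(1)$. The $d$-sum then collapses via Lemma~\ref{lemma-FI} into a density $\asymp 1/\log y \asymp 1/\log k$, while the $(e,n)$-sums factor prime-by-prime into an Euler product. The main obstacle I foresee is the prime-by-prime assembly of these local factors: at primes $\ell \nmid k$ they are of the form $1 + O(1/\ell)$ and contribute a convergent, bounded product, whereas at primes $\ell \mid k$ we have $t^2-4k \equiv t^2 \pmod \ell$, so the Kronecker symbol degenerates to $\leg{t^2}{\ell} \in \{0,1\}$ and the local factor inflates to roughly $(1-1/\ell)^{1-r} + 1/\ell$. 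Taking the product of these over $\ell \mid k$ is exactly what produces the factor $(k/\phi(k))^r$ in the stated bound. Since only an upper bound of the correct order is needed, crude prime-by-prime comparisons suffice at each $\ell$ and no delicate cancellation is required.
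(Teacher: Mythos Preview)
Your outline is essentially correct and would lead to the stated bound, but it follows a genuinely different route from the paper. The paper does \emph{not} apply the sieve majorant directly to $S$. Instead it first factors out the primes $\ell\mid k$ by writing
\[
\L(d(p))^r \ll_r \left(\frac{k}{\phi(k)}\right)^r \left(\frac{(j,k)}{\phi((j,k))}\right)^{|r|} \L(k^2 d(p))^r \qquad (p=1+jm),
\]
and then applies Cauchy--Schwarz to split the problem into two pieces: a sum $S_1$ carrying only the factors $\rho((j,k))^{2|r|}\rho(d(p))^{2s}$, handled by Brun--Titchmarsch and elementary divisor sums, and a sum $S_2=\sum_p \L(k^2d(p))^{2r}$ containing only the $L$-values. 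Only \emph{after} this splitting, and after replacing $\L(k^2d(p))^{2r}$ by a linearized short product via Lemma~\ref{lemmashortproduct}, does the paper invoke the sieve weights $\lambda^+$ of Lemma~\ref{lemma-FI} to extend $S_2$ from primes to almost-primes. Thus the order is Cauchy--Schwarz $\to$ short product $\to$ sieve, whereas you propose sieve $\to$ short product, with no Cauchy--Schwarz at all.

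Each approach has its own economy. The paper's Cauchy--Schwarz decouples the $\rho(d(p))^s$ factor from the $\L^r$ factor, so the ensuing Euler-product computation involves only one nonstandard local condition at a time; it also isolates the factor $(k/\phi(k))^r$ at the very start by an algebraic identity, so it never has to be recovered from the local factors. Your direct approach is more streamlined in that it avoids the Cauchy--Schwarz loss of a square root (irrelevant here, since only an upper bound of the right order is sought) and handles everything in one pass, but it forces you to track three simultaneous congruence conditions (from $d$, $e$, and $n$) in the inner $t$-sum and to extract $(k/\phi(k))^r$ from the local densities at $\ell\mid k$, which is correct but requires the extra observation that $\leg{t^2-4k}{\ell}\in\{0,1\}$ there. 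Two details in your sketch deserve care: Lemma~\ref{smooth} as stated applies to $\prod(1-f(p)/p)^{-1}$, not to its $r$-th power, so you should either linearize $(1-x/\ell)^{-r}$ to $1+rx/\ell$ for $\ell>|r|$ (as the paper does) or expand via the generalized divisor function $\tau_r$ directly; and the exceptional set in Lemma~\ref{lemmashortproduct} consists of \emph{conductors}, not values of $t$, so one needs the additional step (present in the paper) that each exceptional conductor $d_1$ gives at most $O(\tau(4k))$ admissible $t$ via $t^2-d_1b^2=4k$.
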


%%%%%%%%%%%%%%%%%%%%%%%%%%%%%%%%%%%%%%%%%%%%%%%%%%%%%%%%%%%%%%%%%%%%%%%%%%%%%%%%%%%%%%%%%%%%%%%%%%%%%%%%%%%%%%%%%%%%%%%%%%%%%%%%%%%%%%%%%%%%%%%%%%%%%%%%%%%%%%%%%%%%%%%%%%%%%%%%%%%%%%%%%%%%%%%%%%%%%%%%%%%%%%%%%%%%%%%%%%%%%%%%%%%%%%%%%%%%%%%%%%%%%%%%%%%%%%%%%%%%%%%%%%%%%%%%%%%%%%%%%%%%%%%%%%%%%%%%%%%%%%%%%%%%%%%%%%%%%%%%%%%%%%%%%%%%%%%%%%%%%%%%%%%%%%%%%%%%%%%%%%%%%%%%%%%%

\section{Completion of the proof of the main results}\label{proofs}

In this section we prove Theorems \ref{CDKS}-\ref{MEN-3}. We start by stating a preliminary result, which is Lemma 15 of \cite{DS-MEG} in slightly altered form.

%%%%%%%%%%%%%%%%%%%%%%%%%%%%%%%%%%%%%%%%%%%%%%%%%%%%%%%%%%%%%%%%%%%%%%%%%%%%%%%%%%%%%%%%%%%%%%%%%%%%%%%%%%%%%%%%%%%%%%%%%%%%%%%%%%%%%%%%%%%%%%%%%%%%%%%%%%%%%%%%%%%%%%%%%%%%%%%%%%%%%%%

\begin{lma}\label{Aut(G)} For $m,k\in\N$, we have that
\[
\frac{|{\Aut}(G_{m,k})|}{|G_{m,k}|}
	= m\phi(m)  \frac{\phi(k)}{k}  \prod_{\substack{\ell | m \\ \ell\nmid k}} \left(1-\frac{1}{\ell^2} \right)  .
\]
\end{lma}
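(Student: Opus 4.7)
The plan is to decompose $|\Aut(G_{m,k})|$ along Sylow subgroups and verify the identity one prime at a time. Since every Sylow subgroup of a finite abelian group is characteristic, we have the product decomposition
\[
\frac{|\Aut(G_{m,k})|}{|G_{m,k}|} = \prod_\ell \frac{|\Aut((G_{m,k})_\ell)|}{|(G_{m,k})_\ell|},
\]
where the product runs over all primes $\ell$. Setting $a=\nu_\ell(m)$ and $b=\nu_\ell(k)$, the $\ell$-part of $G_{m,k}$ is isomorphic to $\Z/\ell^a\Z \times \Z/\ell^{a+b}\Z$, so the task reduces to evaluating this ratio for such a bi-cyclic $\ell$-group in the three relevant cases: $a=0$ (cyclic Sylow), $a\ge 1,\ b=0$ (square Sylow), and $a,b\ge 1$ (mixed).

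In the first case, the Sylow is cyclic of order $\ell^b$ and the ratio equals $1-1/\ell$. In the second case, the Sylow is $(\Z/\ell^a\Z)^2$, whose automorphism group is $\GL_2(\Z/\ell^a\Z)$ of order $\ell^{4(a-1)}(\ell^2-1)(\ell^2-\ell)$, giving ratio $\ell^{2a}(1-1/\ell)(1-1/\ell^2)$. In the third case, I would parametrize endomorphisms as $2\times 2$ integer matrices acting on the generators; well-definedness forces the entry mapping $\Z/\ell^{a+b}\Z$ into $\Z/\ell^a\Z$ to be divisible by $\ell^b$, while the other off-diagonal entry is unconstrained. Invoking the standard criterion that an endomorphism of a finite abelian $\ell$-group is bijective iff its reduction to $G/\ell G$ lies in $\GL_2(\F_\ell)$ (and noting that the restricted off-diagonal entry vanishes modulo $\ell$ since $b\ge 1$), a direct count yields $\ell^{4a+b-2}(\ell-1)^2$ automorphisms and hence a ratio of $\ell^{2a}(1-1/\ell)^2$.

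Multiplying these Sylow-level ratios over all primes $\ell$ and collecting contributions according to the four cases determined by whether $\ell\mid m$ and $\ell\mid k$, one matches the right-hand side $m\phi(m)\cdot \phi(k)/k\cdot \prod_{\ell\mid m,\,\ell\nmid k}(1-1/\ell^2)$ factor by factor: the product $m\phi(m)$ supplies $\prod_{\ell\mid m}\ell^{2a}(1-1/\ell)$, the factor $\phi(k)/k$ supplies $\prod_{\ell\mid k}(1-1/\ell)$, and the additional $(1-1/\ell^2)$ factors are exactly the extra contribution coming from primes $\ell\mid m$ with $\ell\nmid k$. The only mildly technical step is the matrix count in the mixed case $a,b\ge 1$; the rest is bookkeeping. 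As a shortcut, since the statement is explicitly identified as a rephrasing of Lemma~15 of \cite{DS-MEG}, one could equivalently derive it by directly rearranging the formula proved there.
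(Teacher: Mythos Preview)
The paper does not actually prove this lemma: it simply records it as ``Lemma 15 of \cite{DS-MEG} in slightly altered form'' and moves on. Your proposal is therefore strictly more than what the paper does; you supply a self-contained Sylow-by-Sylow computation and also note the citation shortcut. The computation is correct and the case split matches the right-hand side factor by factor exactly as you describe.

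One small slip in the mixed case $a,b\ge 1$: the off-diagonal entry that well-definedness forces to be divisible by $\ell^b$ is the one sending the \emph{shorter} factor $\Z/\ell^a\Z$ into the \emph{longer} factor $\Z/\ell^{a+b}\Z$, not the other way around. (Any map $\Z/\ell^{a+b}\Z\to\Z/\ell^a\Z$ is automatically well-defined.) This does not affect your count, since either description gives $\ell^a$ choices for each off-diagonal entry, and the reduction modulo $\ell$ is triangular in either convention; the automorphism count $\ell^{4a+b-2}(\ell-1)^2$ and the resulting ratio $\ell^{2a}(1-1/\ell)^2$ are correct.
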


%%%%%%%%%%%%%%%%%%%%%%%%%%%%%%%%%%%%%%%%%%%%%%%%%%%%%%%%%%%%%%%%%%%%%%%%%%%%%%%%%%%%%%%%%%%%%%%%%%%%%%%%%%%%%%%%%%%%%%%%%%%%%%%%%%%%%%%%%%%%%%%%%%%%%%%%%%%%%%%%%%%%%%%%%%%%%%%%%%%%%%%

\begin{proof}[Proof of Theorem \ref{CDKS}] The claimed inequalities are a consequence of Corollary \ref{bounds for M(G)}, Proposition \ref{startwiththat}, and H\"older's inequality.
Indeed, let $\mu=\lambda/(\lambda-1)$, so that $1/\lambda+1/\mu=1$. Then we have that
\als{
M(\G)
	&\ll \sum_{\substack{ N^-<p<N^+ \\ p\equiv 1\mod m}} \sqrt{|d(p)|} \frac{|d(p)|}{\phi(|d(p)|)} \L(d(p)) \\
	&\le \left( \sum_{\substack{ N^-<p<N^+ \\ p\equiv 1\mod m}} \sqrt{|d(p)|}\right)^{\frac{1}{\lambda}}
		\left( \sum_{\substack{ N^-<p<N^+ \\ p\equiv 1\mod m}}
			\sqrt{|d(p)|} \left(\frac{|d(p)|}{\phi(|d(p)|)}\right)^\mu \L(d(p))^{\mu}\right)^{\frac{1}{\mu}}  \\
	&\ll
	 \left( \sum_{\substack{ N^-<p<N^+ \\ p\equiv 1\mod m}} \frac{\sqrt{(N^+-p)(p-N^-)}}{m} \right)^{\frac{1}{\lambda}}
		\left( \sum_{\substack{ N^-<p<N^+ \\ p\equiv 1\mod m}}
			\sqrt{k} \left(\frac{|d(p)|}{\phi(|d(p)|)}\right)^\mu \L(d(p))^{\mu}\right)^{\frac{1}{\mu}}  ,
}
since $|d(p)|=(N^+-p)(p-N^-)/m^2\ll N/m^2=k$. So the definition of $\delta$ and Proposition \ref{startwiththat} imply that
\[
M(\G) \ll_{\lambda,A} \delta^{1/\lambda} \frac{km}{\phi(m)\log(2N)} \frac{k}{\phi(k)}.
\]
Hence the upper bound in Theorem \ref{CDKS} follows by Lemma \ref{Aut(G)}.

The proof of the lower bound is similar, having as a starting point the inequality
\[
\sum_{\substack{ N^-<p<N^+ \\ p\equiv 1\mod m}} \sqrt{|d(p)|}
	\le \left( \sum_{\substack{ N^- <p \le N^+  \\ p\equiv 1\pmod{m} }}
			\sqrt{|d(p)|} \L(d(p)) \right)^{\frac{1}{\lambda}}
		\left( \sum_{\substack{ N^- <p \le N^+ \\ p\equiv 1\pmod{m} }}
			\frac{\sqrt{|d(p)|}}{\L(d(p))^{\mu/\lambda}} \right)^{\frac{1}{\mu}} .
\]
\end{proof}

%%%%%%%%%%%%%%%%%%%%%%%%%%%%%%%%%%%%%%%%%%%%%%%%%%%%%%%%%%%%%%%%%%%%%%%%%%%%%%%%%%%%%%%%%%%%%%%%%%%%%%%%%%%%%%%%%%%%%%%%%%%%%%%%%%%%%%%%%%%%%%%%%%%%%%%%%%%%%%%%%%%%%%%%%%%%%%%%%%%%%%%

\begin{proof}[Proof of Theorem \ref{MEN-2}]
The proof of Theorem \ref{MEN-2} is completely analogous to the proof of Theorem \ref{CDKS}. The only difference is that instead of starting with Corollary \ref{bounds for M(G)}, we observe that
\[
\sum_{N^-<p<N^+} \sqrt{|D_N(p)|} \L(D_N(p))
				\ll M(N) \ll
			\sum_{N^-<p<N^+} \frac{|D_N(p)|^{3/2}}{\phi(|D_N(p)|)}
				\L(D_N(p)),
\]
a consequence of relation~\eqref{reduction to class number avg} below with $n=1$.
\end{proof}

%%%%%%%%%%%%%%%%%%%%%%%%%%%%%%%%%%%%%%%%%%%%%%%%%%%%%%%%%%%%%%%%%%%%%%%%%%%%%%%%%%%%%%%%%%%%%%%%%%%%%%%%%%%%%%%%%%%%%%%%%%%%%%%%%%%%%%%%%%%%%%%%%%%%%%%%%%%%%%%%%%%%%%%%%%%%%%%%%%%%%%%

\begin{proof}[Proof of Theorem~\ref{CDKS2}] Note that when $m=k=1$ and $N=1$, then $N^+=4$ and $N^-=0$ and thus the primes 2 and 3 belong to the set $\{N^-<p\le N^+: p\equiv 1\pmod m\}$. So, by Theorem \ref{CDKS}, it suffices to show Theorem \ref{CDKS2} when $y$ is large enough. We further assume that $x\in\N$, which we may certainly do. Observe that $(N^+-p)(p-N^-)\asymp N$ for $p\in((\sqrt{N}-1/2)^2,(\sqrt{N}+1/2)^2)$, and thus
\[
\frac{1}{N/(\phi(m)\log(2N))} \sum_{\substack{N^-<p<N^+ \\ p\equiv 1\mod{p} }} \sqrt{ (N^+-p)(p-N^-)}
	\gg \frac{\phi(m)}{\sqrt{N}} \sum_{\substack{(\sqrt{N}-1/2)^2<p<(\sqrt{N}+1/2)^2 \\ p\equiv 1\mod{m} }} \log p .
\]
So, if we set
\[
C(m,k) = \frac{|\G|^2}{|\Aut(\G)| \log(2\G)} \asymp \frac{mk^2}{\phi(m)\phi(k)\log(mk)},
\]	
then Theorem \ref{CDKS} with $\lambda=2$ implies that
\als{
\sum_{\substack{3x/4<m\le x \\ y/100<k\le y}} \sqrt{ \frac{M(\G)}{C(m,k)} }
	&\gg \sum_{\substack{3x/4<m\le x \\ y/100<k\le y}} \frac{\phi(m)}{x\sqrt{y}}
		\sum_{\substack{ (m\sqrt{k}-1/2)^2<p< (m\sqrt{k}+1/2)^2 \\ p\equiv 1\pmod m}} \log p \\
	&\ge\sum_{3x/4<m\le x} \sum_{\substack{ x^2y/3 < p \le 4x^2y/9 \\ p\equiv 1\pmod m }} \frac{\phi(m)\log p}{x\sqrt{y}}\sum_{ \substack{y/100<k \le y \\ (\sqrt p -1/2)^2/m^2<k < (\sqrt p+1/2)^2/m^2 }} 1 ,
}
provided that $y$ is large enough. Note that
\[
\frac{(\sqrt p+1/2)^2 -  (\sqrt p-1/2)^2}{m^2} = \frac{2\sqrt{p}}{m^2} \ge \frac{2x\sqrt{y/3}}{x^2} > 1,
\]
by our assumptions that $x\le\sqrt{y}$. Since we also have that $(\sqrt p-1/2)^2/m^2>y/100$ and that $(\sqrt p+1/2)^2/m^2\le y$ for $y$ large enough and $m$ and $p$ as above, we conclude that
\[
\sum_{\substack{3x/4<m\le x \\ y/100<k\le y}} \sqrt{ \frac{M(\G)}{C(m,k)} }
	\gg \frac{1}{x^2} \sum_{3x/4<m\le x} \phi(m)
		\sum_{\substack{ x^2y/3 < p \le 4x^2y/9 \\ p\equiv 1\pmod m }} \log p.
\]
This last double sum equals
\[
\sum_{3x/4<m\le x} \phi(m) \cdot
		\frac{x^2y}{9\phi(m)} + O_A\left( \frac{x^3y}{(\log y)^A}\right) \gg x^3y,
\]
by the Bombieri Vinogradov theorem. Therefore we conclude that
\[
\sum_{\substack{3x/4<m\le x \\ y/100<k\le y}} \sqrt{ \frac{M(G_{m,k})}{C(m,k)} } \gg xy.
\]
Since the summands are all $\ll1$ in this range by Theorem \ref{CDKS} (recall that $\delta\ll1$ there), we obtain Theorem \ref{CDKS2}.
\end{proof}

%%%%%%%%%%%%%%%%%%%%%%%%%%%%%%%%%%%%%%%%%%%%%%%%%%%%%%%%%%%%%%%%%%%%%%%%%%%%%%%%%%%%%%%%%%%%%%%%%%%%%%%%%%%%%%%%%%%%%%%%%%%%%%%%%%%%%%%%%%%%%%%%%%%%%%%%%%%%%%%%%%%%%%%%%%%%%%%%%%%%%%%

\begin{proof}[Proof of Theorem \ref{CDKS3}] Let $\theta$ be a parameter, which we take to be $1/2$ or $1/4$, according to whether we assume the generalized Riemann hypothesis or not. We then suppose that $1\le x\le y^{\theta-\epsilon}$. Note that Theorem \ref{CDKS} and Lemma \ref{Aut(G)} imply that
\[
\sum_{\substack{m\le x,\, k\le y/(\log y)^A \\ mk>1}} \left| M(G_{m,k}) - \frac{K(\G) |\G|^2}{|\Aut (\G)|\log|\G|} \right|
	\ll \frac{xy^2}{(\log y)^A} .
\]
We break the remaining range of $m$ and $k$ into dyadic intervals, hence reducing Theorem \ref{CDKS3} to showing that
\[
E:=  \sum_{\substack{ x/2< m\le x \\ y/2< k\le y}}
		\left| M(G_{m,k}) - \frac{K(\G) |\G|^2}{|\Aut (\G)|\log|\G|} \right| \ll_{\epsilon,A} \frac{xy^2}{(\log y)^A}
\]
for $x\le y^{\theta-\epsilon}$. (Note that these might be different values of $x,y$ and $\epsilon$ than the ones we started with.) We apply Theorem \ref{approximation of M(G)} with $h= (x^2y)^{1/2}/(\log y)^{A+2}$ for all $m\in[x/2,x]$ and $k\in[y/2,y]$, to deduce that
\als{
E &\ll \frac{\sqrt{y}}{h} \sum_{\substack{ x/2< m\le x \\ y/2< k\le y}} \sum_{q \le k^{\epsilon} } \tau_3(q)
				 \int_{(m^2k)^-}^{(m^2k)^+} E(t,h; qm ) \dee t
				+  \frac{xy^2}{(\log y)^A} \\
	&=: E'+  \frac{xy^2}{(\log y)^A} ,
}
say. Putting the sum over $k$ inside, we find that
\als{
E'&\ll \frac{\sqrt{y}}{h} \sum_{x/2<m\le x}  \sum_{q \le y^{\epsilon} } \tau_3(q)
	\int_{x^2y/10}^{2x^2y}  E(t,h; qm) 
		\left(\sum_{ \substack{ y/2<k\le y \\ t^-/m^2<k < t^+/m^2}} 1  \right) \dee t \\
	&\ll \frac{y}{hx} \sum_{m\le x}  \sum_{q \le y^{\epsilon} } \tau_3(q)
	\int_{x^2y/10}^{2x^2y}  E(t,h; qm)  \dee t 
	\le \frac{y}{hx} \sum_{m\le x}  \sum_{q \le y^{\epsilon} } \tau_4(q)
	\int_{x^2y/10}^{2x^2y}  E(t,h; q)  \dee t .
}
We note that $E(u,h; b) \ll \sqrt{h/\phi(b)} \sqrt{E(u,h;b)}$, by the Brun-Titchmarsch inequality. So the Cauchy-Schwarz inequality and Lemma \ref{bv-short} imply that
\als{
E'&\ll \frac{y}{x h} \left(\sum_{b\le xy^{3\epsilon}} \tau_4(b)^2 \int_{x^2y/10}^{2x^2y}
		\frac{h}{\phi(b)}  \dee t\right)^{\frac{1}{2}}
		\left( \sum_{b\le xy^{3\epsilon}} \int_{x^2y/10}^{2x^2y} E(t,h; b) \dee t\right)^{\frac{1}{2}} \\
	&\ll \frac{y}{x h} \left( x^2yh(\log y)^{16} \cdot \frac{x^2yh}{(\log y)^{2A+16}}  \right)^{\frac{1}{2}}
		= \frac{xy^2}{(\log y)^A} ,
}
which completes the proof of Theorem \ref{CDKS3}.
\end{proof}

%%%%%%%%%%%%%%%%%%%%%%%%%%%%%%%%%%%%%%%%%%%%%%%%%%%%%%%%%%%%%%%%%%%%%%%%%%%%%%%%%%%%%%%%%%%%%%%%%%%%%%%%%%%%%%%%%%%%%%%%%%%%%%%%%%%%%%%%%%%%%%%%%%%%%%%%%%%%%%%%%%%%%%%%%%%%%%%%%%%%%%%

\begin{proof}[Proof of Theorem \ref{MEN-1}]
%Proposition \ref{startwiththat} and Corollary \ref{bounds for M(G)} imply that
Theorem~\ref{CDKS} implies that
\[
M(\G) \ll \frac{k^{3/2}}{\phi (k)} \frac{\sqrt{N}}{\phi(m)\log(2k)} = \frac{mk^2}{\phi(k)\phi(m)\log(2k)}\le \frac{Nmk}{\phi(N)\phi(m)\log(2k)}.
\]
Therefore,
\[
\sum_{\substack{m^2k=N \\ m>x}} M(\G)
	\ll \sum_{\substack{m^2|N \\ x<m\le\sqrt{N} }} \frac{N^2}{m\phi(m)\phi(N)\log(2N/m^2)}
	\ll \frac{N^2}{x\phi(N)\log(2N)},
\]
which completes the proof of Theorem \ref{MEN-1}.
\end{proof}

%%%%%%%%%%%%%%%%%%%%%%%%%%%%%%%%%%%%%%%%%%%%%%%%%%%%%%%%%%%%%%%%%%%%%%%%%%%%%%%%%%%%%%%%%%%%%%%%%%%%%%%%%%%%%%%%%%%%%%%%%%%%%%%%%%%%%%%%%%%%%%%%%%%%%%%%%%%%%%%%%%%%%%%%%%%%%%%%%%%%%%%

\begin{proof}[Proof of Theorem \ref{MEN-3}]
In view of Theorem \ref{MEN-1}, it suffices to show that
\[
\sum_{1<N\le x} \left| \sum_{\substack{ m^2k=N \\ m\le (\log x)^A}} M(\G)  - \frac{K(N)N^2}{\phi(N)\log N} \right|
	\ll_A \frac{x^2}{(\log x)^A},
\]
where $K(N)$ is defined by~\eqref{define K(N)}.
Note that
\als{
&\sum_{1<N\le x} \left| \sum_{\substack{ m^2k=N \\ m\le (\log x)^A}} M(\G)
			- \sum_{\substack{ m^2k=N \\ m\le (\log x)^A}}  \frac{K(\G) |\G|^2}{|\Aut (\G)|\log|\G|} \right|   \\
	&\quad\le \sum_{\substack{1<m^2k\le x \\ m\le (\log x)^A}}
			 \left| M(\G)  - \frac{K(\G) |\G|^2}{|\Aut (\G)|\log|\G|} \right|   \\
	&\quad\le \sum_{1\le 2^j\le (\log x)^{A} }
		\sum_{\substack{k\le x/4^j \\ 2^j\le m<2^{j+1} \\ m^2k>1 }}
			 \left| M(\G)  - \frac{K(\G) |\G|^2}{|\Aut (\G)|\log|\G|} \right|   \\
	&\quad\ll_A \sum_{1\le 2^j\le (\log x)^{A}} \frac{x^2}{8^j(\log x)^A} \ll \frac{x^2}{(\log x)^A}
}
by Theorem~\ref{CDKS3}.
So it suffices to show that
\eq{MEN-3 goal}{
\sum_{1<N\le x} \frac{N}{\log N} \left| \sum_{\substack{ m^2k=N \\ m\le (\log x)^A}} \frac{K(\G) |\G|}{|\Aut (\G)|}
	- \frac{K(N)N}{\phi(N)} \right|
	\ll_A \frac{x^2}{(\log x)^A} .
}
In fact, Lemma \ref{Aut(G)} implies that
\als{
\frac{ K(\G) |\G|}{|\Aut(\G)|}
	&= \frac{k}{m\phi(m)\phi(k)}  \prod_{\substack{\ell | m \\ \ell\nmid k}} \left(1-\frac{1}{\ell^2} \right)^{-1}  K(\G) \\
	&= \frac{N}{m^2\phi(N)}\prod_{\ell|(m,k)} \left(1-\frac{1}{\ell}\right)^{-1}
		\prod_{\substack{\ell | m \\ \ell\nmid k}} \left(1-\frac{1}{\ell^2} \right)^{-1}  K(\G)   \\
	&= \frac{N}{m^2\phi(N)}  \prod_{\ell\nmid N}\left(1-\frac{\leg{N-1}{\ell}^2\ell+1}{(\ell-1)^2(\ell+1)}\right)
		\prod_{\ell\mid (m,k)}\left(1+\frac{1}{\ell}\right)
		\prod_{\substack{\ell\mid k\\ \ell\nmid m}}\left(1-\frac{1}{\ell(\ell-1)}\right) .
}
Therefore,
\als{
\sum_{\substack{m^2k = N \\ m\le (\log x)^A}} \frac{K(\G) |\G|}{|\Aut(\G)|}
&= \sum_{m^2k = N} K(\G) \frac{|\G|}{|\Aut(\G)|}  + O\left(\frac{N}{(\log x)^A\phi(N)}\right) \\
&= \frac{N}{\phi(N)}  \prod_{\ell\nmid N}\left(1-\frac{\leg{N-1}{\ell}^2\ell+1}{(\ell-1)^2(\ell+1)}\right) \cdot S(N)
	+ O\left(\frac{N}{(\log x)^A\phi(N)}\right),
}
where
\[
S(N) = \sum_{m^2k=N} \frac{1}{m^2} \prod_{\ell\mid (m,k)}\left(1+\frac{1}{\ell}\right)
		\prod_{\substack{\ell\mid k\\ \ell\nmid m}}\left(1-\frac{1}{\ell(\ell-1)}\right) .
\]
Note that
\als{
S(\ell^v) &= 1-\frac{1}{\ell(\ell-1)} + \sum_{1\le j\le v/2} \frac{1}{\ell^{2j}} \left(1+\frac{{\bf 1}_{j<v/2}}{\ell}\right) \\
	&=1-\frac{1}{\ell(\ell-1)} +  \sum_{1\le j\le v/2} \frac{1}{\ell^{2j}}
		+  \sum_{1\le j\le v/2} \frac{{\bf 1}_{j<v/2}}{\ell^{2j+1}} \\
	&=1-\frac{1}{\ell(\ell-1)} + \sum_{i=2}^v \frac{1}{\ell^i} =  1-\frac{1}{\ell^v(\ell-1)} .
}
So we conclude that
\[
\sum_{\substack{m^2k = N \\ m\le (\log x)^A}} \frac{K(\G) |\G|}{|\Aut(\G)|}
	= \frac{K(N) N}{\phi(N)} + O\left(\frac{N}{(\log x)^A\phi(N)}\right) ,
\]
which yields relation \eqref{MEN-3 goal}, thus completing the proof of Theorem \ref{MEN-3}.
\end{proof}

%%%%%%%%%%%%%%%%%%%%%%%%%%%%%%%%%%%%%%%%%%%%%%%%%%%%%%%%%%%%%%%%%%%%%%%%%%%%%%%%%%%%%%%%%%%%%%%%%%%%%%%%%%%%%%%%%%%%%%%%%%%%%%%%%%%%%%%%%%%%%%%%%%%%%%%%%%%%%%%%%%%%%%%%%%%%%%%%%%%%%%%%%%%%%%%%%%%%%%%%%%%%%%%%%%%%%%%%%%%%%%%%%%%%%%%%%%%%%%%%%%%%%%%%%%%%%%%%%%%%%%%%%%%%%%%%%%%%%%%%%%%%%%%%%%%%%%%%%%%%%%%%%%%%%%%%%%%%%%%%%%%%%%%%%%%%%%%%%%%%%%%%%%%%%%%%%%%%%%%%%%%%%%%%%%%%

\section{Reduction to an average of Dirichlet series}\label{deuring lemma}

In this section, we prove Lemma \ref{formula for M(G)} using the theory developed by Deuring~\cite{Deu} and somewhat generalized by Schoof~\cite{Schoof}. As before, we fix a group $G=G_{m,k}=\Z/m\Z\times\Z/mk\Z$, and we set $N=|G|=m^2k$. Given a prime $p$ and an integer $n$ such that $n^2|N$, we define
\[
M_p(N;n)=\sum_{\substack{E/\F_p\\ \#E(\F_p)=N\\ E(\F_p)[n]\isom G_{n,1}}}\frac{1}{|\Aut_p(E)|},
\]
the weighted number of isomorphism classes of elliptic curves over any prime finite field which have exactly $N$ rational points and whose rational $n$-torsion subgroup is isomorphic to $G_{n,1}=\Z/n\Z\times\Z/n\Z$.
It is not hard to relate $M_p(G)$ to a sum involving $M_p(N;n)$.
This is accomplished via an inclusion-exclusion argument, which gives the relation
\begin{equation}\label{inclusion exclusion}
	M_p(G)=\sum_{r^2 | k} \mu(r)M_p(N; r m).
\end{equation}

In~\cite{Schoof}, Schoof essentially gave a formula for $M_p(N;n)$ in terms of class numbers.
However, one needs to exercise care here as Schoof counts each $\F_p$-isomorphism class $E$ with weight $1$ instead of with weight $1/|\Aut_p(E)|$ as we do here.
Given a negative discriminant $D$, we let $H(D)$ denote the \textit{Kronecker class number}, which is defined as
\[
H(D)=\sum_{ \substack{f^2\mid D\\ D/f^2 \equiv 0,1\pmod{4} }}  \frac{h(D/f^2)}{w(D/f^2)}.
\]
Here, as usual, $h(d)$ denotes the (ordinary) class number of the unique imaginary quadratic order of discriminant $d$,
and $w(d)$ denotes the cardinality of its unit group.
Then letting
\[
D_N(p)=(p+1-N)^2-4p=(p-1-N)^2-4N
\]
and reworking the proofs of~\cite[Lemma 4.8 and Theorem 4.9]{Schoof} to count each class $E$ with weight $1/|\Aut_p(E)|$,
we arrive at the formula
\eq{reduction to class number avg}{
M_p(N;n)=
	\begin{cases}
		H\left(\frac{D_N(p)}{n^2}\right)&\text{if }p\in(N^-,N^+)\text{ and }p\equiv 1\pmod n,\\
		0&\text{otherwise}.
		\end{cases}
}
Note here that $D_N(p)/n^2$ is a negative discriminant whenever $p\in(N^-,N^+)$, $p\equiv 1\pmod n$, and $n^2\mid N$.

%%%%%%%%%%%%%%%%%%%%%%%%%%%%%%%%%%%%%%%%%%%%%%%%%%%%%%%%%%%%%%%%%%%%%%%%%%%%%%%%%%%%%%%%%%%%%%%%%%%%%%%%%%%%%%%%%%%%%%%%%%%%%%%%%%%%%%%%%%%%%%%%%%%%%%%%%%%%%%%%%%%%%%%%%%%%%%%%%%%%%%%

\begin{lma}\label{Deuring for groups}
Let $m,k\in\N$ %\ccom{new sentence follows}
and recall that $d(p) = d_{m,k}(p)$ is defined by \eqref{def-dp}. If $p\in (N^-,N^+)$ and $p\equiv 1\pmod m$, then
\[
M_p(\G)=
\sum_{\substack{f^2\mid d(p),\, (f,k)=1\\ \frac{d(p)}{f^2}\equiv 0,1\pmod 4}}\frac{h(d(p)/f^2)}{w(d(p)/f^2)}.
\]
Otherwise, $M_p(\G)=0$.
\end{lma}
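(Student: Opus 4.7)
The plan is to combine the inclusion-exclusion identity~\eqref{inclusion exclusion} with the Deuring-Schoof formula~\eqref{reduction to class number avg} and then expand the Kronecker class number via its definition. For the ``otherwise'' case: if $p \notin (N^-, N^+)$, then every summand $M_p(N; rm)$ in~\eqref{inclusion exclusion} vanishes by~\eqref{reduction to class number avg}, while if $p \in (N^-, N^+)$ but $p \not\equiv 1 \pmod m$, then $p \not\equiv 1 \pmod{rm}$ for any $r$, so again $M_p(N; rm) = 0$ for each $r^2 \mid k$. Hence we may assume $p \in (N^-, N^+)$ and $p \equiv 1 \pmod m$.

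Substituting~\eqref{reduction to class number avg} into~\eqref{inclusion exclusion} and using $D_N(p) = m^2 d(p)$, so that $D_N(p)/(rm)^2 = d(p)/r^2$, we obtain
\[
M_p(G) = \sum_{\substack{r^2 \mid k \\ p \equiv 1 \pmod{rm}}} \mu(r) \, H(d(p)/r^2).
\]
Expanding $H$ according to its definition and setting $f = rg$, where $g$ is the inner summation variable, we interchange the order of summation to reach
\[
M_p(G) = \sum_{\substack{f^2 \mid d(p) \\ d(p)/f^2 \equiv 0,1 \pmod 4}} \frac{h(d(p)/f^2)}{w(d(p)/f^2)} \sum_{\substack{r \mid f,\, r^2 \mid k \\ p \equiv 1 \pmod{rm}}} \mu(r).
\]

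The next step is to show that the congruence $p \equiv 1 \pmod{rm}$ is automatic given the other constraints on $r$. Writing $t = (p-1)/m - mk$ so that $d(p) = t^2 - 4k$, the hypotheses $r^2 \mid f^2 \mid d(p)$ and $r^2 \mid k$ combine to yield $r^2 \mid t^2$, hence $r \mid t$, so $r$ divides $t + mk = (p-1)/m$ as required. Dropping this congruence, the inner sum becomes a standard M\"obius sum and collapses to the indicator that $\gcd(f, s) = 1$, where $s$ denotes the largest integer whose square divides $k$.

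The main obstacle, and the most delicate point, is to reconcile the condition $\gcd(f, s) = 1$ with the cleaner condition $(f, k) = 1$ appearing in the statement. Any prime $\ell$ dividing $(f, k)$ but not $(f, s)$ must satisfy $v_\ell(k) = 1$. For odd such $\ell$, the relation $\ell^2 \mid f^2 \mid t^2 - 4k$ combined with $v_\ell(4k) = 1$ would force $v_\ell(t^2) = 1$, contradicting that $v_\ell(t^2)$ is even. For $\ell = 2$, a short case analysis on $v_2(f)$ shows that either $f^2 \nmid d(p)$ or $d(p)/f^2 \equiv 2, 3 \pmod 4$, contradicting the discriminant hypothesis. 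In either case the offending $f$ is excluded from both sums, so the two versions agree term by term, yielding the claimed formula.
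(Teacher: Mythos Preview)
Your proof is correct and follows essentially the same route as the paper: apply the inclusion--exclusion identity, insert Schoof's formula, expand the Kronecker class number, swap the order of summation, and then argue that the resulting coprimality condition on $f$ may be upgraded to $(f,k)=1$ via the discriminant constraint. The one difference is that you observe the congruence $p\equiv 1\pmod{rm}$ is automatic once $r\mid f$, $f^2\mid d(p)$, and $r^2\mid k$ (since then $r^2\mid t^2$, hence $r\mid t\mid (p-1)/m - mk$), which lets you reach $(f,s)=1$ directly; the paper instead carries the congruence through to obtain the a~priori weaker condition $(f,s,(p-1)/m)=1$ and then argues from there---but as your observation shows, the two conditions coincide for the relevant $f$, so the final reduction to $(f,k)=1$ is the same in both cases.
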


\begin{rmk}
The above formula is amenable to computation.
Indeed, given a prime $p$ and any $m$ and $k$,
very simple modifications to the usual quadratic forms algorithm for computing class numbers (see~\cite[pp.~99--100]{BV:2007} for example)
make it possible to compute $M_p(\G)$ using at most $O(k)$ arithmetic operations, which is reasonable for small $k$.
If we put
\begin{equation*}
H_k(D)=\sum_{\substack{f^2\mid D,\, (f,k)=1\\ \frac{D}{f^2}\equiv 0,1\pmod 4}}\frac{h(D/f^2)}{w(D/f^2)}
\end{equation*}
for each negative discriminant $D$ and each positive integer $k$, then the only modifications needed are as follows.
When the algorithm produces the (not necessarily primitive) form $ax^2+bxy+cy^2$, say with $(a,b,c)=f\ge1$,
it is counted subject to the following rules, provided that $(f,k)=1$.
\begin{enumerate}
\item Forms proportional to $x^2+y^2$ are counted with weight $1/4$.
\item Forms proportional to $x^2+xy+y^2$ are counted with weight $1/6$.
\item All other forms are counted with weight $1/2$.
\end{enumerate}
Similarly, tables of $M(\G)$ or $M_p(\G)$ values can be computed for $m$ and $k$ of modest size by simultaneously computing a table of values of $H_k(D)$.
\end{rmk}

%%%%%%%%%%%%%%%%%%%%%%%%%%%%%%%%%%%%%%%%%%%%%%%%%%%%%%%%%%%%%%%%%%%%%%%%%%%%%%%%%%%%%%%%%%%%%%%%%%%%%%%%%%%%%%%%%%%%%%%%%%%%%%%%%%%%%%%%%%%%%%%%%%%%%%%%%%%%%%%%%%%%%%%%%%%%%%%%%%%%%%%

\begin{proof}
It follows from~\eqref{reduction to class number avg} that $M_p(G)=0$ unless $p\in(N^-,N^+)$ and $p\equiv 1\pmod m$.
Therefore, assume that $p\in (N^-,N^+)$ and $p\equiv 1\pmod m$, and write $k=s^2t$ with $t$ square-free.
Combining relations~\eqref{inclusion exclusion} and~\eqref{reduction to class number avg}
with the definition of the Kronecker class number, we find that
\begin{equation*}
\begin{split}
M_p(G)
=\sum_{\substack{r\mid s\\ p\equiv 1\pmod{rm}}}\mu(r)H\left(\frac{D_N(p)}{(rm)^2}\right)
&=\sum_{\substack{r\mid s\\ p\equiv 1\pmod{rm}}}\mu(r)H\left(\frac{d(p)}{r^2}\right)\\
&=\sum_{\substack{r\mid s\\ p\equiv 1\pmod{rm}}}\mu(r)
	\sum_{\substack{f^2\mid\frac{d(p)}{r^2}\\ \frac{d(p)}{(rf)^2}\equiv 0,1\pmod 4}}
	\frac{h(d(p)/(rf)^2)}{w(d(p)/(rf)^2)}\\
&=\sum_{\substack{r\mid s\\ p\equiv 1\pmod{rm}}}\mu(r)
	\sum_{\substack{f^2\mid d(p),\, r\mid f\\ \frac{d(p)}{f^2}\equiv 0,1\pmod 4}}\frac{h(d(p)/f^2)}{w(d(p)/f^2)}.
\end{split}
\end{equation*}
Now interchanging the sum over $r$ with the sum over $f$ and recalling the identity
\begin{equation*}
\sum_{r\mid n}\mu(n)=\begin{cases}1&\text{if }n=1,\\ 0&\text{otherwise},\end{cases}
\end{equation*}
we arrive at the formula
\begin{equation*}
\begin{split}
M_p(G)
&=
\sum_{\substack{f^2\mid d(p)\\ (f,s,(p-1)/m)=1\\ \frac{d(p)}{f^2}\equiv 0,1\pmod 4}}\frac{h(d(p)/f^2)}{w(d(p)/f^2)}.
\end{split}
\end{equation*}

In order to complete the proof, it is sufficient to show that, in the above sum,
the condition $(f,s,(p-1)/m)=1$ implies the simpler condition $(f,k)=1$, the converse implication being immediate.
To this end, we write $p=1+jm$ and assume that $(f,s,(p-1)/m)=(f,s,j)=1$.
Then $d(p)=(j-mk)^2-4k$, and the condition $d(p)/f^2\equiv  0, 1\pmod 4$ may be rewritten as
\begin{equation}\label{disc condition}
(j-mk)^2-4k\equiv 0, f^2\pmod{4f^2}.
\end{equation}
Now let $\ell$ be any prime dividing $(f,k)$.
Then the above congruence implies that $\ell\mid j$, but that implies that $\ell^2\mid (j-mk)^2$.
Whence $\ell^2\mid 4k$.
If $\ell$ is odd, then we have that $\ell^2\mid k$, and hence $\ell\mid (f,s,j)=1$, which is a contradiction.
If $\ell=2$, then we divide~\eqref{disc condition} through by $4$ to obtain
\begin{equation*}
\left(\frac{j}{2}-m\frac{k}{2}\right)^2-k\equiv 0, \frac{f^2}{4}\pmod{f^2}.
\end{equation*}
Since $\ell=2\mid (f,k)$,
we have that $k$ is even and congruent to a difference of two squares modulo $4$.
This in turn implies that $k\equiv 0\pmod 4$, i.e., $2\mid s$.
Thus, in this case we also have the contradiction $\ell=2\mid (f,s,j)=1$.
Therefore, we conclude that $(f,k)=1$, and this completes the proof of the lemma.
\end{proof}

%%%%%%%%%%%%%%%%%%%%%%%%%%%%%%%%%%%%%%%%%%%%%%%%%%%%%%%%%%%%%%%%%%%%%%%%%%%%%%%%%%%%%%%%%%%%%%%%%%%%%%%%%%%%%%%%%%%%%%%%%%%%%%%%%%%%%%%%%%%%%%%%%%%%%%%%%%%%%%%%%%%%%%%%%%%%%%%%%%%%%%%

Lemma~\ref{Deuring for groups} together with the class number formula immediately yields Lemma \ref{formula for M(G)}.

%%%%%%%%%%%%%%%%%%%%%%%%%%%%%%%%%%%%%%%%%%%%%%%%%%%%%%%%%%%%%%%%%%%%%%%%%%%%%%%%%%%%%%%%%%%%%%%%%%%%%%%%%%%%%%%%%%%%%%%%%%%%%%%%%%%%%%%%%%%%%%%%%%%%%%%%%%%%%%%%%%%%%%%%%%%%%%%%%%%%%%%%%%%%%%%%%%%%%%%%%%%%%%%%%%%%%%%%%%%%%%%%%%%%%%%%%%%%%%%%%%%%%%%%%%%%%%%%%%%%%%%%%%%%%%%%%%%%%%%%%%%%%%%%%%%%%%%%%%%%%%%%%%%%%%%%%%%%%%%%%%%%%%%%%%%%%%%%%%%%%%%%%%%%%%%%%%%%%%%%%%%%%%%%%%%%

\section{Local computations}\label{local computations}

In this section we gather some local computations which we will need in the proofs of Theorem \ref{approximation of M(G)} and Proposition \ref{startwiththat}.  As before, we continue to assume that $m, k,$ and $N$ are positive integers with $N=|\G|=m^2k$.

%%%%%%%%%%%%%%%%%%%%%%%%%%%%%%%%%%%%%%%%%%%%%%%%%%%%%%%%%%%%%%%%%%%%%%%%%%%%%%%%%%%%%%%%%%%%%%%%%%%%%%%%%%%%%%%%%%%%%%%%%%%%%%%%%%%%%%%%%%%%%%%%%%%%%%%%%%%%%%%%%%%%%%%%%%%%%%%%%%%%%%%

\begin{lma}\label{generic quad lemma}
Let $\ell$ be an odd prime prime. For $e\ge1$, $(d,\ell)=1$ and $(a,b)=1$, we have that
\[
\#\{j\in\Z/\ell^e\Z : j^2\equiv d\pmod{\ell^e}\} =  1+ \leg{d}{\ell}
\]
and
\[
\#\{j\in\Z/\ell^e\Z : j^2\equiv d\pmod{\ell^e},\,(a+bj,\ell)=1\} =  1+ \leg{a^2-db^2}{\ell}^2 \leg{d}{\ell} .
\]
\end{lma}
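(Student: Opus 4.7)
The plan is to use Hensel's lemma to reduce both counts from $\Z/\ell^e\Z$ to $\Z/\ell\Z$, and then evaluate the resulting finite-field counts by a short case analysis on the Legendre symbols of $d$ and $a^2-db^2$.

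For the first identity, since $(d,\ell)=1$, any root of $X^2-d$ in $\mathbb{F}_\ell$ is nonzero, so the derivative $2X$ is a unit at that root (here we use that $\ell$ is odd). Hensel's lemma then gives a bijection between the roots of $X^2-d$ in $\Z/\ell\Z$ and in $\Z/\ell^e\Z$. The number of roots mod $\ell$ is $0$ or $2$ depending on whether $d$ is a non-residue or a nonzero square, and in both cases this equals $1+\leg{d}{\ell}$.

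For the second identity, the extra condition $(a+bj,\ell)=1$ depends only on $j\bmod\ell$, so by the same Hensel lift it suffices to count $j\in\Z/\ell\Z$ with $j^2\equiv d\pmod\ell$ and $a+bj\not\equiv 0\pmod\ell$. I will split on whether $\ell\mid b$. If $\ell\mid b$, then $(a,b)=1$ forces $\ell\nmid a$, so $a+bj\equiv a\not\equiv 0\pmod\ell$ automatically, reducing to part (a); here $a^2-db^2\equiv a^2\pmod\ell$ is a nonzero square, so $\leg{a^2-db^2}{\ell}^2=1$ and the claimed formula matches. If $\ell\nmid b$, then $a+bj\equiv 0\pmod\ell$ has the unique solution $j\equiv -a/b\pmod\ell$, and this $j$ is a root of $X^2-d$ modulo $\ell$ precisely when $\ell\mid a^2-db^2$. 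One then checks the three resulting sub-cases: $d$ a non-residue (count $0$, and the formula gives $1+1\cdot(-1)=0$); $d$ a nonzero square with $\ell\nmid a^2-db^2$ (count $2$, matching $1+1\cdot 1$); and $d$ a nonzero square with $\ell\mid a^2-db^2$ (count $1$, matching $1+0\cdot 1$).

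The only point requiring a moment of care is ruling out the combination \emph{$d$ is a non-residue and $\ell\mid a^2-db^2$}: if this held then either $\ell\mid b$, which forces $\ell\mid a$ contradicting $(a,b)=1$, or $\ell\nmid b$, which forces $d\equiv(a/b)^2\pmod\ell$ and hence $d$ a square, again a contradiction. This ensures that in the non-residue case one has $\leg{a^2-db^2}{\ell}^2=1$, so the formula correctly returns $0$. Beyond this, the lemma is a routine unpacking of Hensel's lemma and the definition of the Legendre symbol; there is no real obstacle.
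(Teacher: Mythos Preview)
Your proof is correct and follows essentially the same approach as the paper: reduce to $\Z/\ell\Z$ (the paper calls the first formula ``classical'' where you invoke Hensel explicitly), then do a case analysis on whether $\ell\mid b$ and whether $d$ is a residue, checking when the unique $j$ with $a+bj\equiv 0$ coincides with a square root of $d$. The only cosmetic difference is the order of the case split---the paper branches on $\leg{d}{\ell}$ first, you branch on $\ell\mid b$ first---but the underlying argument is identical.
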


%%%%%%%%%%%%%%%%%%%%%%%%%%%%%%%%%%%%%%%%%%%%%%%%%%%%%%%%%%%%%%%%%%%%%%%%%%%%%%%%%%%%%%%%%%%%%%%%%%%%%%%%%%%%%%%%%%%%%%%%%%%%%%%%%%%%%%%%%%%%%%%%%%%%%%%%%%%%%%%%%%%%%%%%%%%%%%%%%%%%%%%

\begin{proof} The first formula is classical. For the second, we first note that if $\leg{d}{\ell}=-1$, then $\leg{a^2-db^2}{\ell}^2=1$, and the formula holds. Now assume that $\leg{d}{\ell}=1$, so that there are exactly two solutions to the congruence $j^2\equiv d\mod{\ell^e}$, say $\pm j_0$. If $\ell\mid b$, then the condition $(a+bj,\ell)=1$ is satisfied trivially for all $j\in\Z$, and the claimed result follows. Finally, if $\ell\nmid b$, then we need to exclude exactly one of the solutions when $a\equiv \pm bj_0\mod{\ell}$, that is to say when $a^2\equiv b^2d\mod{\ell}$. So the claimed formula holds in this last  case too.
\end{proof}

%%%%%%%%%%%%%%%%%%%%%%%%%%%%%%%%%%%%%%%%%%%%%%%%%%%%%%%%%%%%%%%%%%%%%%%%%%%%%%%%%%%%%%%%%%%%%%%%%%%%%%%%%%%%%%%%%%%%%%%%%%%%%%%%%%%%%%%%%%%%%%%%%%%%%%%%%%%%%%%%%%%%%%%%%%%%%%%%%%%%%%

We set
\eq{T(n) def}{
T(n)  =  \sum_{d\mod n} \leg{d-4k}{n} \#\{j\mod n: j^2\equiv d\mod n,\ (N+1+jm,n)=1\}.
}

%%%%%%%%%%%%%%%%%%%%%%%%%%%%%%%%%%%%%%%%%%%%%%%%%%%%%%%%%%%%%%%%%%%%%%%%%%%%%%%%%%%%%%%%%%%%%%%%%%%%%%%%%%%%%%%%%%%%%%%%%%%%%%%%%%%%%%%%%%%%%%%%%%%%%%%%%%%%%%%%%%%%%%%%%%%%%%%%%%%%%%%

\begin{prop}\label{odd primes prop} Let $\ell$ be a prime not dividing $2k$ and $w\ge1$. Then
\[
\frac{T(\ell^w)}{\ell^{w-1}}  = - \leg{m(N-1)}{\ell}^2 +
	\begin{cases}
		\ell -  1 - \leg{k}{\ell}
			&\mbox{if $w$ is even},\cr
		 -  1
			&\mbox{if $w$ is odd}.
	\end{cases}
\]
\end{prop}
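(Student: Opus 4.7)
The plan is to first interchange the order of summation in the definition of $T(\ell^w)$, rewriting it as
\begin{equation*}
T(\ell^w) = \sum_{\substack{j\mod \ell^w \\ (N+1+jm,\,\ell)=1}} \left(\frac{j^2-4k}{\ell^w}\right),
\end{equation*}
since the inner set in~\eqref{T(n) def} lets $j\mod n$ range freely as $d=j^2$ varies. Because $\ell$ is odd, the Kronecker symbol $\left(\frac{\cdot}{\ell^w}\right)$ equals $\left(\frac{\cdot}{\ell}\right)^w$, and the coprimality condition with $\ell$ depends only on $j\mod\ell$. Each residue class $j\mod\ell$ therefore has exactly $\ell^{w-1}$ lifts contributing the same value, which extracts the factor $\ell^{w-1}$ and reduces the problem to a sum over $j\mod\ell$.

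When $w$ is even the summand becomes the indicator of $\ell\nmid j^2-4k$, so I would count $j\mod\ell$ satisfying simultaneously $\ell\nmid j^2-4k$ and $\ell\nmid N+1+jm$ by inclusion-exclusion. The first constraint excludes $1+\left(\frac{k}{\ell}\right)$ residues by Lemma~\ref{generic quad lemma}; the second excludes $1$ residue if $\ell\nmid m$ and none if $\ell\mid m$ (since then $N+1\equiv 1\pmod\ell$); and the key algebraic identity
\begin{equation*}
(N+1)^2-4km^2=(N-1)^2
\end{equation*}
shows that the two excluded sets intersect precisely when $\ell\mid m(N-1)$, i.e.\ exactly when the correction factor $\left(\frac{m(N-1)}{\ell}\right)^2$ vanishes. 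Assembling these counts gives the even-$w$ formula of the proposition.

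When $w$ is odd the inner sum is a classical quadratic character sum, and $\sum_{j\mod\ell}\left(\frac{j^2-4k}{\ell}\right)=-1$ whenever $\ell\nmid 4k$. I then only need to subtract the contribution at the excluded residue $j\equiv-(N+1)/m\pmod\ell$ when $\ell\nmid m$; at this point the same identity $(N+1)^2-4km^2=(N-1)^2$ gives $j^2-4k\equiv(N-1)^2/m^2\pmod\ell$, so the excluded term equals $\left(\frac{N-1}{\ell}\right)^2=\left(\frac{m(N-1)}{\ell}\right)^2$. When $\ell\mid m$ there is no excluded residue and the correction factor also vanishes, so both sub-cases collapse to the uniform expression $-1-\left(\frac{m(N-1)}{\ell}\right)^2$. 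The only real chore is bookkeeping the two sub-cases $\ell\mid m$ versus $\ell\nmid m$ carefully enough that they merge into one compact formula; no deeper ingredient is needed beyond the character sum identity and the algebraic miracle $(N+1)^2-4N=(N-1)^2$.
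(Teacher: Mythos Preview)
Your proposal is correct. The interchange of summation is valid, and since $\left(\frac{a}{\ell^w}\right)=\left(\frac{a}{\ell}\right)^w$ for odd $\ell$, the summand depends only on $j\bmod\ell$, so the factor $\ell^{w-1}$ drops out exactly as you say. The even and odd cases then reduce to the counting argument and the classical identity $\sum_{j\bmod\ell}\left(\frac{j^2-4k}{\ell}\right)=-1$ respectively, and the identity $(N+1)^2-4N=(N-1)^2$ does the rest. One phrasing is slightly off: the two excluded sets do \emph{not} intersect precisely when $\ell\mid m(N-1)$ (when $\ell\mid m$ the second set is empty, yet $\ell\mid m(N-1)$ holds); but since you separately record that $|B|=0$ in that case, the inclusion--exclusion still gives $|A\cap B|=|B|-\left(\frac{m(N-1)}{\ell}\right)^2$ in both sub-cases, and the formula comes out right.

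Your route is genuinely different from the paper's. The paper keeps the outer sum over $d$ and splits it as $T_1+T_2$ according to whether $\ell\mid d$; for $T_2$ it invokes the second formula of Lemma~\ref{generic quad lemma} to evaluate the inner count over $j$, then simplifies the resulting character sums over $d$ (including the Jacobsthal-type sum $\sum_d\left(\frac{d-4k}{\ell}\right)\left(\frac{d}{\ell}\right)=-1$). You instead swap the sums at the outset, which collapses everything to a single sum over $j\bmod\ell$ and bypasses both the $T_1/T_2$ split and the second part of Lemma~\ref{generic quad lemma}. Your argument is a bit shorter and more transparent; the paper's version has the minor advantage that the intermediate quantities $T_1(\ell^w)$, $T_2(\ell^w)$ are computed separately, which can be useful if one later needs finer information about the $d$-decomposition.
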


%%%%%%%%%%%%%%%%%%%%%%%%%%%%%%%%%%%%%%%%%%%%%%%%%%%%%%%%%%%%%%%%%%%%%%%%%%%%%%%%%%%%%%%%%%%%%%%%%%%%%%%%%%%%%%%%%%%%%%%%%%%%%%%%%%%%%%%%%%%%%%%%%%%%%%%%%%%%%%%%%%%%%%%%%%%%%%%%%%%%%%%

\begin{proof} We write $T(\ell^w)=T_1(\ell^w)+T_2(\ell^w)$, where $T_1(\ell^w)$ is the same sum as $T(\ell^w)$ with the additional restriction that $\ell|d$ and $T_2(\ell^w)$ is the remaining sum. First, we calculate $T_1(\ell^w)$. We have that
\als{
T_1(\ell^w)
	&=  \sum_{\substack{d\mod{\ell^w}\\\ell|d}}
	\leg{d-4k}{\ell^w} \sum_{\substack{j\mod{\ell^w} \\ j^2\equiv d\mod {\ell^w}}}\leg{N+1+jm}{\ell}^2 \\
		&= \sum_{\substack{d\mod{\ell^w}\\\ell|d}}
	\leg{-4k}{\ell}^w \leg{N+1}{\ell}^2 \sum_{\substack{j\mod{\ell^w},\, \ell|j \\ j^2\equiv d\mod {\ell^w}}} 1 \\
		&= \leg{-k}{\ell}^w \leg{N+1}{\ell}^2 \sum_{\substack{j\mod{\ell^w} \\ \ell|j }} 1
		= \leg{-k}{\ell}^w \leg{N+1}{\ell}^2 \ell^{w-1} .
}
Finally, we compute $T_2(\ell^w)$. Applying Lemma \ref{generic quad lemma}, we find that
\als{
T_2(\ell^w) &= \sum_{\substack{d\mod{\ell^w}\\ (d,\ell)=1}} \leg{d-4k}{\ell}^w
			\left(  1+\leg{(N+1)^2-dm^2}{\ell}^2 \leg{d}{\ell}  \right)  \\
		&= \ell^{w-1}  \sum_{d\mod{\ell}} \leg{d-4k}{\ell}^w
			\left(  1+\leg{(N+1)^2-dm^2}{\ell}^2 \leg{d}{\ell}  \right)  -  \ell^{w-1}\leg{-k}{\ell}^w.
}
If $\ell\mid m$, then $\leg{(N+1)^2-dm^2}{\ell}=1$ for all $d\mod\ell$. On the other hand, if $\ell \nmid m$, then there is precisely one $d\mod\ell$ such that $(N+1)^2-dm^2\equiv 0\mod{\ell}$, for which we have that
\[
\leg{d-4k}{\ell}^{w}=\leg{m^2d-4m^2k}{\ell}^{w} = \leg{(N-1)^2}{\ell}^{w} = \leg{N-1}{\ell}^2
\quad\text{and}\quad
\leg{d}{\ell} = \leg{N+1}{\ell}^2.
\]
Thus, whether $\ell$ divides $m$ or not, we have
\[
\frac{T_2(\ell^w)}{\ell^{w-1}}   =  - \leg{-k}{\ell}^w  - \leg{m(N-1)(N+1)}{\ell}^2
		+  \sum_{d\mod\ell}   \leg{d-4k}{\ell}^{w} \left( 1+ \leg{d}{\ell}  \right),
\]
which implies that
\als{
\frac{T(\ell^w)}{\ell^{w-1}} &  =  \leg{-k}{\ell}^w \leg{N+1}{\ell}^2  - \leg{-k}{\ell}^w  - \leg{m(N-1)(N+1)}{\ell}^2  \\
		&\quad +  \sum_{d\mod\ell}   \leg{d-4k}{\ell}^{w} \left( 1+ \leg{d}{\ell}  \right) .
}
Note that if $\ell|N+1$, then $\leg{-k}{\ell}=1$ and thus
\[
\leg{-k}{\ell}^w \leg{N+1}{\ell}^2  - \leg{-k}{\ell}^w - \leg{m(N-1)(N+1)}{\ell}^2 = -1 = - \leg{m(N-1)}{\ell}^2,
\]
whereas if $\ell\nmid N+1$, then
\[
\leg{-k}{\ell}^w \leg{N+1}{\ell}^2  - \leg{-k}{\ell}^w - \leg{m(N-1)(N+1)}{\ell}^2 = - \leg{m(N-1)}{\ell}^2 .
\]
So
\als{
\frac{T(\ell^w)}{\ell^{w-1}}
	&=  - \leg{m(N-1)}{\ell}^2
		+  \sum_{d\mod\ell}   \leg{d-4k}{\ell}^{w} \left( 1+ \leg{d}{\ell}  \right) .
}
If now $w$ is odd, then
\[
 \sum_{d\mod\ell}   \leg{d-4k}{\ell}^{w} \left( 1+ \leg{d}{\ell}  \right)
=  \sum_{d\mod\ell}   \leg{d-4k}{\ell}\leg{d}{\ell}
= -1 ,
\]
using for example \cite[Exercise 1.1.9]{Stepanov} since $(2k, \ell)=1$.
Finally, if $w$ is even, then
\[
 \sum_{d\mod\ell}   \leg{d-4k}{\ell}^{w} \left( 1+ \leg{d}{\ell}  \right)
 	=  \ell-1 + \sum_{\substack{d\mod\ell \\ d\not\equiv 4k\mod{\ell}}}  \leg{d}{\ell}
	= \ell-1 - \leg{k}{\ell} ,
\]
which completes the proof of the proposition.
\end{proof}

%%%%%%%%%%%%%%%%%%%%%%%%%%%%%%%%%%%%%%%%%%%%%%%%%%%%%%%%%%%%%%%%%%%%%%%%%%%%%%%%%%%%%%%%%%%%%%%%%%%%%%%%%%%%%%%%%%%%%%%%%%%%%%%%%%%%%%%%%%%%%%%%%%%%%%%%%%%%%%%%%%%%%%%%%%%%%%%%%%%%%%%

\begin{cor}\label{formula for P(ell)} For a prime $\ell$ not dividing $2k$, we have that
\[
P(\ell) := 1 + \sum_{w\ge1} \frac{T(\ell^w)}{\ell^{2w-1}(\ell- \leg{m}{\ell}^2) }
	= \frac{\ell^3-\leg{m}{\ell}^2\ell^2 - (1+\leg{m}{\ell}^2\leg{N-1}{\ell}^2)\ell - 1- \leg{N-1}{\ell}^2\leg{k}{\ell} }
		{(\ell^2-1)(\ell-\leg{m}{\ell}^2)} .
\]
\end{cor}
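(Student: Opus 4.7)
The proof will be a direct computation: substitute the closed form of Proposition \ref{odd primes prop} into the series defining $P(\ell)$, sum the resulting elementary geometric series, and simplify. To streamline bookkeeping, I would introduce the shorthand $A := \leg{m}{\ell}^2\leg{N-1}{\ell}^2$, $B := \leg{k}{\ell}$, and $D := \leg{m}{\ell}^2$, so that Proposition \ref{odd primes prop} reads $T(\ell^w)/\ell^{w-1} = -A + c_w$ with $c_w = \ell - 1 - B$ when $w$ is even and $c_w = -1$ when $w$ is odd. The factor $\ell^{w-1}$ cancels one power of $\ell$ in the denominator $\ell^{2w-1}$, so that
\[
(\ell - D)(P(\ell) - 1) = \sum_{w\ge 1}\frac{-A + c_w}{\ell^w}.
\]

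The right-hand side splits into three elementary geometric sums, one for the $-A$ contribution and one for each parity of $w$. These evaluate respectively to $-A/(\ell - 1)$, $-\ell/(\ell^2 - 1)$ (odd $w$), and $(\ell - 1 - B)/(\ell^2 - 1)$ (even $w \ge 2$), whose total simplifies to $-A/(\ell - 1) - (1 + B)/(\ell^2 - 1)$. Clearing denominators and expanding $(\ell^2 - 1)(\ell - D) - A(\ell + 1) - 1 - B$ then yields
\[
P(\ell) = \frac{\ell^3 - D\ell^2 - (1 + A)\ell + (D - 1 - A - B)}{(\ell^2 - 1)(\ell - D)}.
\]

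This agrees with the stated formula in every term except possibly the constant coefficient of the numerator, where my expression reads $D - 1 - A - B$ while the target reads $-1 - \leg{N-1}{\ell}^2 B$. Since $A = D\leg{N-1}{\ell}^2$, matching the two is equivalent to $(D - B)(1 - \leg{N-1}{\ell}^2) = 0$, which is automatic unless $\ell \mid N - 1$. That last case is the only mildly subtle point of the proof, and I would handle it by exploiting the hypothesis $N = m^2 k$: the congruence $m^2 k \equiv 1 \pmod \ell$ forces $\ell \nmid m$, giving $D = 1$, and exhibits $k \equiv m^{-2} \pmod \ell$ as a nonzero square modulo $\ell$, giving $B = 1$ as well. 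Everything else in the argument is purely mechanical.
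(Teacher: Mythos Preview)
Your proof is correct and follows essentially the same approach as the paper: both substitute the formula from Proposition~\ref{odd primes prop}, sum the resulting geometric series to reach the numerator $\ell^3 - D\ell^2 - (1+A)\ell + (D - A - 1 - B)$, and then resolve the constant term via the observation that $\ell\mid N-1$ forces $\leg{m}{\ell}^2=\leg{k}{\ell}=1$. Your exposition is in fact a bit more explicit than the paper's, which simply labels the geometric-series step as ``a straightforward computation.''
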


%%%%%%%%%%%%%%%%%%%%%%%%%%%%%%%%%%%%%%%%%%%%%%%%%%%%%%%%%%%%%%%%%%%%%%%%%%%%%%%%%%%%%%%%%%%%%%%%%%%%%%%%%%%%%%%%%%%%%%%%%%%%%%%%%%%%%%%%%%%%%%%%%%%%%%%%%%%%%%%%%%%%%%%%%%%%%%%%%%%%%%%

\begin{proof}
Lemma \ref{odd primes prop} and a straightforward computation imply that
\[
P(\ell) = \frac{\ell^3-\leg{m}{\ell}^2\ell^2 - (1+\leg{m}{\ell}^2\leg{N-1}{\ell}^2)\ell + \leg{m}{\ell}^2-\leg{m(N-1)}{\ell}^2-1-\leg{k}{\ell} } {(\ell^2-1)(\ell-\leg{m}{\ell}^2)} .
\]
Finally, note that
\[
\leg{m(N-1)}{\ell}^2+\leg{k}{\ell}  - \leg{m}{\ell}^2 =  \leg{N-1}{\ell}^2\leg{k}{\ell} ,
\]
since $\leg{k}{\ell}=\leg{m}{\ell}^2=1$ if $\ell|N-1$.
\end{proof}

%%%%%%%%%%%%%%%%%%%%%%%%%%%%%%%%%%%%%%%%%%%%%%%%%%%%%%%%%%%%%%%%%%%%%%%%%%%%%%%%%%%%%%%%%%%%%%%%%%%%%%%%%%%%%%%%%%%%%%%%%%%%%%%%%%%%%%%%%%%%%%%%%%%%%%%%%%%%%%%%%%%%%%%%%%%%%%%%%%%%%%%%%%%%%%%%%%%%%%%%%%%%%%%%%%%%%%%%%%%%%%%%%%%%%%%%%%%%%%%%%%%%%%%%%%%%%%%%%%%%%%%%%%%%%%%%%%%%%%%%%%%%%%%%%%%%%%%%%%%%%%%%%%%%%%%%%%%%%%%%%%%%%%%%%%%%%%%%%%%%%%%%%%%%%%%%%%%%%%%%%%%%%%%%%%%%

\section{Proof of Proposition \ref{startwiththat}}\label{proof of Prop startwiththat}

%\ccom{new sentence}
This section is dedicated to the proof of Proposition \ref{startwiththat}, which gives an upper bound of the conjectured order of magnitude for the average of special values
\[
\L(d(p)) = L \left(1,  \displaystyle{\left( \frac{d(p)}{\cdot}\right)} \right)
\] 
summed over integers with no small prime factors. A key role will be played by the fundamental lemma of sieve methods, i.e. Lemma \ref{lemma-FI}.

%%%%%%%%%%%%%%%%%%%%%%%%%%%%%%%%%%%%%%%%%%%%%%%%%%%%%%%%%%%%%%%%%%%%%%%%%%%%%%%%%%%%%%%%%%%%%%%%%%%%%%%%%%%%%%%%%%%%%%%%%%%%%%%%%%%%%%%%%%%%%%%%%%%%%%%%%%%%%%%%%%%%%%%%%%%%%%%%%%%%%%%%%%

\begin{proof}[Proof of Proposition \ref{startwiththat}] We shall employ the notation
\[
\rho(n) := \frac{|n|}{\phi(|n|)} = \prod_{\ell | n} \left(1-\frac{1}{\ell}\right)^{-1} .
\]
We will simplify the sum we are estimating with an application of the Cauchy-Schwarz inequality but, first, we massage the
$L$-functions that appear in it. Note that if $p=1+jm$, then $d(p)=(j-mk)^2 - 4k \equiv j^2\pmod k$.
So
\als{
\L(d(p))^r
	= \prod_{ \substack{ \ell | k \\ \ell\nmid j}} \left(1-\frac{1}{\ell} \right)^{-r}
		\prod_{  \ell \nmid k} \left(1-\frac{ \leg{d(p)}{\ell}  }{\ell}\right)^{-r}
	 \ll_r \rho(k)^r \rho((j,k))^{|r|}  \L(k^2d(p))^r,
}
and consequently,
\[
S := \sum_{\substack{N^-<p<N^+ \\ p \equiv 1 \pmod m }}
	 \rho(d(p))^{s} \L(d(p))^r
	\ll_r  \rho(k)^r \sum_{\substack{N^-<p<N^+ \\ p =1+jm,\, j\in\N }} \rho((j,k))^{|r|} \rho(d(p))^s
		 \L(k^2d(p))^r.
\]
Hence the Cauchy-Schwarz inequality yields that
\eq{C-S}{
\frac{S}{\rho(k)^r }
	\ll_r   \left(\sum_{\substack{N^-<p<N^+ \\ p =1+jm }} \rho((j,k))^{2|r|} \rho(d(p))^{2s} \right)^{\frac{1}{2}}
		\left(\sum_{\substack{N^-<p<N^+ \\ p\equiv 1\pmod m }}
			 \L(k^2d(p))^{2r} \right)^{\frac{1}{2}}
	=: \sqrt{S_1 S_2} ,
}
say.

First, we estimate $S_1$. Note that
\[
\rho(n)^v \asymp_v \prod_{\ell|n} \left(1+\frac{v}{\ell}\right)
			= \sum_{a|n} \frac{\mu^2(a) \tau_{v}(a) }{a} ,
\]
for any $v\ge0$. Since
\[
\sum_{\substack{a|n \\ a>x}} \frac{\mu^2(a) \tau_v(a)}{a}
	\le \frac{1}{x}\sum_{a|n} \mu^2(a)v^{\omega(a)}
		= \frac{(v+1)^{\omega(n)}}{x} \ll_{v,\epsilon} \frac{n^\epsilon}{x} ,
\]
we find that
\eq{C-S-e1}{
S_1 &\ll_r  \sum_{\substack{ N^-<p <N^+ \\ p=1+jm}}
		\left( \sum_{\substack{a|(k,j) \\ a\le k^{1/5} }}\frac{\mu^2(a)\tau_{2|r|}(a)}{a} + O_r(k^{-1/6}) \right)
		\left( \sum_{\substack{b|d(p) \\ b\le k^{1/5} }}\frac{\mu^2(b)\tau_{2s}(b)}{b} + O_s(k^{-1/6})\right) \\
	&=   \sum_{ \substack{ a,b\le k^{1/5} \\ a|k }} \frac{\mu^2(a)\mu^2(b)\tau_{2|r|}(a)\tau_{2s}(b) }{ab}
			\sum_{\substack{N^-<p<N^+ \\ p=1+jm \\ a|j,\ b| d(p) }} 1
		 +O_{r,s}( k^{11/30} ),
}
using the trivial estimate $\#\{N^-<p<N^+:p\equiv1\mod m\}\ll \sqrt{N}/m=\sqrt{k}$.
The innermost sum in the second line of \eqref{C-S-e1} equals
\[
\sum_{\substack{ h \in \Z/[a,b]\Z \\ h \equiv 0 \pmod a \\ (h-mk)^2 \equiv 4k \pmod b}}
			 \sum_{\substack{ N^-<p <N^+ \\ p=1+jm \\ j\equiv h \pmod{[a,b]} }} 1
	\ll \frac{\sqrt{N}}{\phi(m[a,b])\log(2k)}
		\sum_{\substack{ h \in \Z/[a,b]\Z \\ h\equiv 0 \pmod a \\ (h-mk)^2 \equiv 4k \pmod b}} 1
	\le \frac{\sqrt{N} \tau(b) }{\phi(m[a,b])\log(2k)} ,
\]
where the first inequality follows from the Brun-Titchmarsch inequality and the second from the fact that $b$ is square-free.
Since $\phi(m[a,b])\ge\phi(m)\phi([a,b])$, relation \eqref{C-S-e1} becomes
\eq{estimation of S_1}{
S_1 &\ll_{r,s} \frac{\sqrt{N}}{\phi(m)\log(2k)} \sum_{ \substack{ a,b\le k^{1/5} \\ a|k }}
		\frac{\mu^2(a)\mu^2(b)\tau_{2|r|}(a)\tau_{2s}(b)^2 }{a\cdot b\cdot \phi([a,b])}
			+ k^{11/30}
	\ll_{r,s} \frac{\sqrt{N}}{\phi(m)\log(2k)} .
}

Next, we turn to the estimation of
\[
S_2 = \sum_{\substack{N^-<p<N^+ \\ p \equiv 1 \pmod m }}  \L(k^2d(p))^{2r} .
\]
Our first task is to replace the $L$-values that appear in the above sum with truncated Euler products. We set
\[
S_3 = \sum_{\substack{N^-<p<N^+ \\ p \equiv 1 \pmod m }}  \L(k^2d(p);z^{80000})^{2r}
\]
with $z=\log(4k)$ and estimate the error
\[
R:=S_2-S_3
\]
using Lemma~\ref{lemmashortproduct}.
First note that since $d(p)$ is a discriminant and $|d(p)|\le 4k$ for $p\in (N^-,N^+)$,
it follows that 
\eq{leg-k^2d(p)}{
\leg{k^2d(p)}{\cdot}
}
is periodic modulo $k|d(p)|\le 4k^2$ and its conductor cannot exceed $|d(p)|\le 4k$.
Thus, we may apply Lemma~\ref{lemmashortproduct} with $\alpha=100$ and $Q=4k$.
Now let $d_1=d_1(p)$ be the discriminant of the quadratic number field $\Q(\sqrt{d(p)})$, so that the character in \eqref{leg-k^2d(p)} is induced by the primitive character $\leg{d_1}{\cdot}$. If $|d_1| \notin \mathcal{E}_{100}(4k)$, then we can approximate $\L(k^2d(p))^{2r}$ very well by $\L(k^2d(p);z^{80000})^{2r}$.
Otherwise, we write $d(p) = d_1 b^2$ and note that
\[
\L(k^2d(p))^{2r}
	\le \rho(kb)^{2|r|} \L(d_1)^{2r}
	\ll_r \rho(kb)^{2|r|}\cdot \begin{cases}
		 (\log|d_1|)^{2r} &\text{if}\ r\ge0,\cr
		 |d_1|^{1/8} &\text{if}\ r<0,
	     \end{cases}
\]
the second estimate being a consequence of Siegel's theorem.
In any case, we find that
\[
\L(k^2d(p))^{2r}
	\ll_r (\rho(kb))^{2|r|} |d_1|^{1/8} \ll_r (kb |d_1|)^{1/8} \le (k |d(p)|)^{1/8} \le (2k)^{1/4}.
\]
Combining the above, we arrive at the estimate
\als{
R &\ll_{r} \sum_{\substack{N^-<p<N^+ \\ p \equiv 1 \pmod m }} \frac{(\log\log k)^{2|r|} }{\log^{100}(2k)}
	+ \sum_{\substack{N^-<p<N^+ \\ p \equiv 1 \pmod m  \\ |d_1| \in\mathcal{E}_{100}(4k)}} k^{1/4}
}
Note that if $p=1+jm$ is such that $|d_1| \in\mathcal{E}_{100}(4k)$, then $d(p)=d_1 b^2$ for some $b\in\mathbb{N}$, or equivalently, $(j-mk)^2-d_1 b^2=4k$.
So for each fixed $d_1$ with $|d_1|\in \mathcal{E}_{100}(4k)$, there are at most $4\tau(4k)\ll k^{1/100}$ admissible values of $j$ (and hence of $p$). Consequently,
\eq{estimation of R}{
R \ll_{r}
	\sum_{\substack{N^-<p<N^+ \\ p \equiv 1 \pmod m }} \frac{(\log\log k)^{2|r|} }{\log^{100}(2k)}
		+ k^{1/4} \cdot k^{1/100} \cdot |\mathcal{E}_{100}(4k)|
  	\ll_{r} \frac{\sqrt{N} }{ \log(2k)\phi(m)},
}
by Lemma\ \ref{lemmashortproduct} and the Brun-Titchmarsch inequality.

Finally, we turn to the estimation of $S_3$. First, note that 
\[
\L(k^2d(p);z^{80000})^{2r}
	\ll_r \L(k^2d(p);\sqrt{z})^{2r}
	\ll_r  \prod_{ \substack{ \ell\nmid 2pk \\ 2|r|+1< \ell \le \sqrt{z} }} \left(1+2r\cdot \frac{\leg{d(p)}{\ell}  }{\ell}\right) ,
\]
by Mertens' estimate, which immediately implies that
\[
S_3 \ll_r
	\sum_{\substack{N^-<p<N^+ \\ p \equiv 1 \pmod{m}  }}
		\prod_{ \substack{ \ell\nmid 2pk \\ 2|r|+1 < \ell \le \sqrt{z} }}
			\left(1+2r\cdot \frac{\leg{d(p)}{\ell}  }{\ell}\right) .
\]
We cannot estimate this sum as it is because that would require information about primes in arithmetic progressions that are currently not available.
We refer the reader to~\cite{DS-MEG} for a more detailed discussion about this issue.
Instead, we extend the summation from primes $p$ to integers $n$ with no prime factors $\le k^{1/8}$ and we apply Lemma~\ref{lemma-FI} with $D=k^{1/4}$ and $y=k^{1/8}$. Hence
\eq{S3-S4}{
S_3 \ll_r \sum_{\substack{N^-< n <N^+\\ n \equiv 1\pmod {m} }}(\lambda^+*1)(n)
	\prod_{ \substack{2|r|+1 < \ell\le\sqrt{z} \\  \ell\nmid 2nk }} \left(1+2r\cdot \frac{\leg{d(n)}{\ell}}{\ell}\right)=:S_4,
}
by the positivity of the above Euler product. Expanding this product to a sum, opening the convolution $(\lambda^+*1)(n)$, and interchanging the order of summation yields
\als{
S_4 &=
	\sum_{\substack{\ell|a\ \Rightarrow\ 2|r|+1<\ell\le \sqrt{z} \\ (a,2k)=1}}\frac{\mu^2(a) \tau_{2r}(a) }{a}
	\sum_{\substack{N^-<n<N^+\\ (n,a)=1 \\ n\equiv 1\pmod{m} }}
		(\lambda^+*1)(n) \leg{d(n)}{a} \nn
  &= 	\sum_{\substack{\ell|a\ \Rightarrow\ 2|r|+1<\ell\le \sqrt{z}  \\ (a,2k)=1}}\frac{\mu^2(a)\tau_{2r}(a) }{a}
	\sum_{\substack{b \leq k^{1/4}\\ (b,am)=1}}\lambda^+(b)
	\sum_{\substack{N^-<n<N^+\\ (n,a)=1,\, b|n \\ n\equiv 1\pmod{m} }}\leg{d(n)}{a} .
}
Splitting the integers $n\in(N^-,N^+)$ according to the congruence class of $d(n)\pmod a$, we deduce that
\eq{S_3}{
S_4 = \sum_{\substack{\ell |a\ \Rightarrow\ 2|r|+1<\ell\le \sqrt{z} \\ (a,2k)=1}}
	\frac{\mu^2(a)\tau_{2r}(a)}{a}
	\sum_{\substack{b \leq k^{1/4}\\ (b,am)=1}}\lambda^+(b)
	\sum_{c\in\Z/a\Z} \leg{c}{a} S(a,b,c),
}
where
\[
S(a,b,c)
	:= \#\left\{ N^-<n<N^+ :
		\begin{array}{ll}
			n\equiv 1\pmod {m} & (n,a)=1 \\
			n\equiv 0\pmod b & d(n)\equiv c\pmod{a}
		\end{array}
	\right\}.
\]
We fix $a$, $b$ and $c$ as above and calculate $S(a,b,c)$. Set $n=1+j m$, and define $\Delta(j)=(j-mk)^2-4k$, so that $d(n)=\Delta(j)$.
Note that $n$ is counted by $S(a,b,c)$ if and only if $mk - 2\sqrt{k} < j < mk+2\sqrt{k}$, $\Delta(j)\equiv c\pmod a$, $1+jm\equiv0\pmod b$ and $(1 + jm, a) = 1$. Thus we have that
\begin{equation}\label{T}
S(a,b,c)=\left(\frac{4\sqrt{k}}{ab}+O(1)\right)J(a,b,c),
\end{equation}
where
\[
J(a,b,c) :=\#\{j\in \Z/ab\Z :
				\Delta(j)\equiv c\pmod a,\
				1 + jm \equiv 0 \pmod b,\
				(1 + jm, a) = 1\}.
\]
By the Chinese remainder theorem, we find that
\[
J(a,b,c)=U(a,c) := \#\{j\in \Z/a\Z: \Delta(j)\equiv c\pmod a,\ (1+ jm,a) = 1\},
\]
since $(b,m)=1$, and thus there is exactly one solution modulo $b$ to the equation $1+jm\equiv 0\pmod b$. Note that $U(a,c)\le \tau(a)$ by Lemma \ref{generic quad lemma} and that
\[
\sum_{c\in\Z/a\Z}\leg{c}{a} U(a,c) = T(a),
\]
where $T(a)$ is defined by relation \eqref{T(n) def}. Together with relations~\eqref{S_3} and~\eqref{T}, this implies that
\als{
S_4	&= 4\sqrt{k} \sum_{\substack{\ell|a\ \Rightarrow\ 2|r|+1<\ell\le \sqrt{z}   \\ (a,2k)=1}}\frac{\mu^2(a)\tau_{2r}(a)T(a)}{a^2}
	\sum_{\substack{b \leq k^{1/4}\\ (b,am)=1}}\frac{\lambda^+(b)}b \\
	&\quad + O \left( k^{1/4} \sum_{ P^+(a)\le\sqrt{z} } \mu^2(a)\tau_{2|r|}(a)\tau(a) \right) .
}
The error term in the above estimate is
\[
\ll k^{1/4} \sum_{ P^+(a)\le\sqrt{z} } \mu^2(a)\tau_{2|r|}(a)\tau(a)
	 = k^{1/4} \prod_{\ell\le \sqrt{z}} (1+4|r|) \ll_r k^{1/3} .
\]
Finally, note that $|T(a)|\le \tau(a)$ for square-free values of $a$, by Proposition \ref{odd primes prop}. So applying Lemma~\ref{lemma-FI} we conclude that
\als{
S_4 &\ll_r  \sqrt{k}	\sum_{\substack{P^+(a)\le\sqrt{z} \\ (a,2k )=1}}\frac{\mu^2(a) \tau(a) \tau_{2|r|}(a)}{a^2}
		\prod_{\substack{\ell\le k^{1/8}\\ \ell\nmid am}}\left(1-\frac1{\ell}\right)+k^{1/3} \\
	&\ll \sqrt{k}\sum_{\substack{P^+(a)\le\sqrt{z} \\ (a,2k )=1}}\frac{\mu^2(a) \tau(a) \tau_{2|r|}(a)}{a^2}
		\frac1{\log(2k)}  \frac{m}{\phi(m)}  \frac{a}{\phi(a)} +k^{1/3} .
}
Inserting this estimate in \eqref{S3-S4}, we obtain the upper bound
\eq{S_3 e2}{
S_3 \ll_r \frac{\sqrt{k}}{\log(2k)}\frac{m}{\phi(m)}\sum_{(a,2k)=1}\frac{\mu^2(a)\tau(a)\tau_{2|r|}(a)}{a\phi(a)}
	\ll_r \frac{\sqrt{k}}{\log(2k)}\frac{m}{\phi(m)}.
}
Combining the above inequality with relations \eqref{C-S}, \eqref{estimation of S_1}, and \eqref{estimation of R} completes the proof of the proposition.
\end{proof}

%%%%%%%%%%%%%%%%%%%%%%%%%%%%%%%%%%%%%%%%%%%%%%%%%%%%%%%%%%%%%%%%%%%%%%%%%%%%%%%%%%%%%%%%%%%%%%%%%%%%%%%%%%%%%%%%%%%%%%%%%%%%%%%%%%%%%%%%%%%%%%%%%%%%%%%%%%%%%%%%%%%%%%%%%%%%%%%%%%%%%%%%%%%%%%%%%%%%%%%%%%%%%%%%%%%%%%%%%%%%%%%%%%%%%%%%%%%%%%%%%%%%%%%%%%%%%%%%%%%%%%%%%%%%%%%%%%%%%%%%%%%%%%%%%%%%%%%%%%%%%%%%%%%%%%%%%%%%%%%%%%%%%%%%%%%%%%%%%%%%%%%%%%%%%%%%%%%%%%%%%%%%%%%%%%%%

\section{Approximating $M(G)$}\label{approx}

In this section, we prove Theorem \ref{approximation of M(G)}. We start with a preliminary lemma.

%%%%%%%%%%%%%%%%%%%%%%%%%%%%%%%%%%%%%%%%%%%%%%%%%%%%%%%%%%%%%%%%%%%%%%%%%%%%%%%%%%%%%%%%%%%%%%%%%%%%%%%%%%%%%%%%%%%%%%%%%%%%%%%%%%%%%%%%%%%%%%%%%%%%%%%%%%%%%%%%%%%%%%%%%%%%%%%%%%%%%%%%%%

\begin{lma}\label{prime sum} Let $N=m^2k>1$ and $d(p)=d_{m,k}(p)$. If $1\le q\le h \le \sqrt{N}$ and $(a,q)=1$, then
\[
\sum_{\substack{N^-<p\le N^+ \\ p\equiv a\mod q }} \sqrt{|d(p)|}
	= \frac{2\pi mk}{\phi(q)\log N}
			+ O\left( \frac{h}{\sqrt{N}} \cdot \frac{mk}{q} + \frac{\sqrt{k}}{h\log N} \int_{N^-}^{N^+} E(y,h;q) dy \right) .
\]
\end{lma}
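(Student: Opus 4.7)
Set $F(y) := \sqrt{(N^+ - y)(y - N^-)}/m$ on $[N^-, N^+]$, so that $\sqrt{|d(p)|} = F(p)$. The function $F$ is continuous and nonnegative, vanishes at both endpoints, is bounded by $2\sqrt{k}$, and has total variation $V(F) = 4\sqrt{k}$. A trigonometric substitution $y = (N+1) + 2\sqrt{N}\cos\theta$ gives $\int_{N^-}^{N^+} F(y)\,dy = 2\pi N/m = 2\pi m k$, so the claimed main term $2\pi m k/(\phi(q)\log N)$ is exactly $\int F /(\phi(q)\log N)$, as expected from the density $1/(\phi(q)\log y)$ of primes in $(N^-,N^+)$ congruent to $a\pmod q$. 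The plan is to estimate the log-weighted sum
\[
T := \sum_{\substack{N^-<p\le N^+ \\ p\equiv a\pmod q}} F(p)\log p
\]
and then recover $\sum F(p)$ using $\log p = \log N + O(1/\sqrt N)$.

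For $T$, I would exploit the averaging identity
\[
hT = \int_{\mathbb R} \biggl(\sum_{\substack{y<p\le y+h \\ p\equiv a\pmod q}} F(p)\log p\biggr) dy,
\]
obtained by swapping sum and integral via $\int_{p-h}^p 1\,dy = h$. Within each window $(y,y+h]$ I would approximate $F(p) = F(y) + O(\Delta(y))$, where $\Delta(y) := \sup_{[y,y+h]}F - \inf_{[y,y+h]}F$ is the oscillation of $F$, and apply the definition of $E$ to write
\[
\sum_{\substack{y<p\le y+h \\ p\equiv a\pmod q}} \log p = \frac{h}{\phi(q)} + O(E(y,h;q)).
\]
Since $F,\Delta\le 2\sqrt k$, substituting and integrating in $y$ yields
\[
hT = \frac{h}{\phi(q)}\int F(y)\,dy + O\biggl(\frac{h}{\phi(q)}\int \Delta(y)\,dy\biggr) + O\biggl(\sqrt{k}\int_{N^-}^{N^+} E(y,h;q)\,dy\biggr).
\]

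To close the argument, I would invoke the bounded-variation estimate $\int \Delta(y)\,dy \le h\,V(F) \ll h\sqrt k$, divide the previous display by $h\log N$, and bound the conversion from $T/\log N$ to $\sum F(p)$ by $(1/\log N)\cdot(C/\sqrt N)\sum F(p)$, which is $\ll \sqrt k/(\phi(q)\log N\log(\sqrt N/q))$ by Brun--Titchmarsh and therefore dominated by the stated errors. Using $\sqrt k = mk/\sqrt N$, the term $h\sqrt k/(\phi(q)\log N)$ fits inside the claimed $(h/\sqrt N)(mk/q)$ error, while the $E$-contribution matches the stated integral error exactly. The main obstacle is the derivative singularity of $F$ at the endpoints $N^\pm$, which rules out a direct integration-by-parts against $\theta(y;q,a)$ that would require a pointwise bound on $|F'|$; working with the oscillation $\Delta$ in tandem with the averaged BV inequality $\int \Delta \le h\,V(F)$ is the natural workaround.
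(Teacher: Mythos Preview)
Your approach is correct and shares the same skeleton as the paper's proof: both insert a factor $\log p/\log N$, smooth over windows of length $h$, and swap the sum over primes with the integral to bring in $E(y,h;q)$. The difference lies in how the smoothing error is controlled. The paper parametrizes $p = N+1+2\sqrt{N}u_0$ and replaces $\sqrt{1-u_0^2}$ by its average over $[u_0-\eta,u_0]$ (with $\eta = h/(2\sqrt{N})$), incurring a pointwise error $O(\eta/\sqrt{1-u_0^2})$; this forces the paper to excise primes within distance $\sim 10h$ of $N^\pm$, bound the excised contribution separately, and then control $\sum_p 1/\sqrt{(N^+-p)(p-N^-)}$ over the remaining primes by breaking into blocks of length $h$. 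Your route via the oscillation $\Delta(y)$ and the bounded-variation inequality $\int\Delta \ll h\,V(F) = 4h\sqrt{k}$ sidesteps all of that case analysis: because $V(F)$ is finite even though $|F'|$ diverges at the endpoints, the singularity is absorbed automatically. The only bookkeeping point you glossed over is that the support of $\Delta$ is $[N^- - h, N^+]$ rather than $[N^-, N^+]$, so the cross-term $\Delta\cdot E$ contributes an extra sliver $\int_{N^--h}^{N^-}\sqrt{k}\,E\,dy \ll \sqrt{k}\,h\cdot h\log N/q$; after dividing by $h\log N$ this lands inside $(h/\sqrt{N})\cdot mk/q$, so the argument closes.
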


%%%%%%%%%%%%%%%%%%%%%%%%%%%%%%%%%%%%%%%%%%%%%%%%%%%%%%%%%%%%%%%%%%%%%%%%%%%%%%%%%%%%%%%%%%%%%%%%%%%%%%%%%%%%%%%%%%%%%%%%%%%%%%%%%%%%%%%%%%%%%%%%%%%%%%%%%%%%%%%%%%%%%%%%%%%%%%%%%%%%%%%%%%

\begin{proof}
We note the trivial bound $\#\{t<p\le t+h:p\equiv a\mod q\}\ll h/q$, which we will use several times throughout the proof. We have that
\eq{prime sum log insertion}{
\sum_{\substack{N^-<p\le N^+ \\ p\equiv a\mod q }} \sqrt{|d(p)|}
	= \sum_{\substack{N^-<p\le N^+ \\ p\equiv a\mod q }} \frac{\sqrt{|d(p)|}\log p}{\log N}	
		+ O\left( \frac{\sqrt{k}}{q}  \right) .
}
Note that if $t=N+1+2\sqrt{N}u_0$ and $u_0\in[-1+2\eta,1-\eta]$ with $\eta:=h/\sqrt{4N}$, then 
\als{
\sqrt{|d(t)|} = 2\sqrt{k}\cdot \sqrt{1-u_0^2} 
		&= \frac{2\sqrt{k}}{\eta}\int_{u_0-\eta}^{u_0} \sqrt{1-u^2} \,\dee u 
			+ O\left(\frac{\eta\sqrt{k}}{\sqrt{1-u_0^2}}\right) \\
		&=\frac{4mk}{h}\int_{u_0-\eta}^{u_0} \sqrt{1-u^2} \, \dee u 
			+ O\left(\frac{h\sqrt{k}}{\sqrt{4N-(N+1-t)^2}}\right) .
}
%\fcom{I changed from $|d(t)|$ to $\sqrt {|d(t)|}$ above.}
%Moreover, note that if  then
%\[
%\sqrt{1-u_0^2} = \frac{1}{\eta}\int_{u_0-\eta}^{u_0} \sqrt{1-u^2} du + O\left(\frac{\eta}{\sqrt{1-u_0^2}}\right) .
%\]
%Applying this with $\eta=h/\sqrt{4N}$, we deduce that
Therefore
\als{
\sum_{\substack{N^-<p\le N^+ \\ p\equiv a\mod q }} \frac{\sqrt{|d(p)|}\log p}{\log N} 
	&= \sum_{\substack{10h+ N^-<p\le -10h+N^+\\ p\equiv a\mod q }} \frac{\sqrt{|d(p)|}\log p}{\log N}	
		 + O\left( \frac{h^{1/2}N^{1/4}}{m} \cdot \frac{h}{q}\right) \\
	&= \frac{4mk}{h\log N}\sum_{\substack{N^- +10h<p\le N^+ - 10h \\ p\equiv a\mod q }} (\log p)
		 \int_{\frac{p-N-1-h}{2\sqrt{N}}}^{\frac{p-N-1}{2\sqrt{N}}} 
			\sqrt{1-u^2} \,\dee u  \\
	&\quad  + O\left( \sum_{\substack{N^- +10h<p\le N^+ - 10h \\ p\equiv a\mod q }}
			\frac{h\sqrt{k}}{\sqrt{(N^+-p)(p-N^-)}}
			+ \frac{h^{3/2}N^{1/4}}{mq} \right) \\
	&= \frac{4mk}{h\log N}  \int_{-1+9\eta}^{1-10\eta} \sqrt{1-u^2} 
		\sum_{\substack{N+1+2u\sqrt{N}<p\le N+1+2u\sqrt{N}+h \\ N^- +10h<p\le N^+ - 10h \\ p\equiv a\mod q }} 
			(\log p) \,\dee u  \\
	&\quad  + O\left( \sum_{\substack{N^- +10h<p\le N^+ - 10h \\ p\equiv a\mod q }}
			\frac{h\sqrt{k}}{\sqrt{(N^+-p)(p-N^-)}}
			+ \frac{h^{3/2}N^{1/4}}{mq} \right) .
}
First, we simplify the main term. If $u\in[-1+10\eta,1-11\eta]$, then the condition that $N^- +10h<p\le N^+ - 10h$ can be discarded. On the other hand, if $u\in[-1,1]\setminus[-1+10\eta,1-11\eta]$, then
\als{
 \sqrt{1-u^2} 
		\sum_{\substack{N+1+2u\sqrt{N}<p\le N+1+2u\sqrt{N}+h \\ N^- +10h<p\le N^+ - 10h \\ p\equiv a\mod q }} 
			(\log p) 
	&\le  \sqrt{1-u^2} 
		\sum_{\substack{N+1+2u\sqrt{N}<p\le N+1+2u\sqrt{N}+h \\ p\equiv a\mod q }} 
			(\log p)   \\
	&\ll \sqrt{\eta}\cdot  \frac{h\log N}{q} .
}
Therefore
\als{
&\int_{-1+9\eta}^{1-10\eta} \sqrt{1-u^2} 
		\sum_{\substack{N+1+2u\sqrt{N}<p\le N+1+2u\sqrt{N}+h \\ N^- +10h<p\le N^+ - 10h \\ p\equiv a\mod q }} 
			(\log p) \,\dee u \\
	&\qquad= \int_{-1}^1 \sqrt{1-u^2} 
		\sum_{\substack{N+1+2u\sqrt{N}<p\le N+1+2u\sqrt{N}+h \\ p\equiv a\mod q }} 
			(\log p) \, \dee u
		 + O\left( \frac{\eta^{3/2} h\log N}{q} \right) \\
	&\qquad= \int_{-1}^{1} \sqrt{1-u^2} \frac{h}{\phi(q)} \, \dee u  
		 + O\left(  \int_{-1}^1 E(N+1+2u\sqrt{N},h;q) \dee u  + \frac{h^{5/2}\log N}{N^{3/4} q} \right) \\
	&\qquad=\frac{\pi}{2}\cdot \frac{h}{\phi(q)} + 
		O\left( \frac{1}{\sqrt{N}} \int_{N^-}^{N^+} E(y,h;q) \dee y  + \frac{h^{5/2} \log N}{N^{3/4} q} \right) .
}
Consequently,
\als{
\sum_{\substack{N^-<p\le N^+ \\ p\equiv a\mod q }} \sqrt{|d(p)|}
	&= \frac{2\pi mk}{\phi(q)\log N}
	 + O\left( \sum_{\substack{N^- +10h<p\le N^+ - 10h \\ p\equiv a\mod q }} \frac{h\sqrt{k}}{\sqrt{(N^+-p)(p-N^-)}}\right) \\
	 &+O\left( \frac{\sqrt{k}}{h\log N} \int_{N^-}^{N^+} E(y,h;q) dy  + \frac{\sqrt{k}}{q} + \frac{h^{3/2}N^{1/4}}{mq}\right) ,
}
where the term $\sqrt{k}/q$ inside the big-Oh comes from \eqref{prime sum log insertion}.
%\fcom{Should $\frac{\sqrt{k}}{q}$ be $\frac{h\sqrt{k}}{q}$?}\dcom{This error term comes from the first displayed equation of the proof (when we add the $\log p$ weight), so I think it's $\sqrt{k}/q$.} \fcom{Could you explain what contribution from $\frac{\eta^{3/2} h\log N}{q}$ is? I keep getting $\frac{h\sqrt k}{q}$ from this term, but I probably misunderstand something. }
%
It remains to bound 
\[
\sum_{\substack{N^- +10h<p\le N^+ - 10h \\ p\equiv a\mod q }} \frac{1}{\sqrt{(N^+-p)(p-N^-)}} .
\]
We break this sum into two pieces, according to whether $p\le N+1$ or $p>N+1$. Note that  %\fcom{Dimitris, could you explain why we could bound the above by $$ \sum_{\substack{N^- +10h<p\le N+1  \\ p\equiv a\mod q }} \frac{1}{\sqrt{(N^+-p)(p-N^-)}}? $$}\dcom{We cannot, we just break the sum into two pieces and deal with them separately.}
\als{
\sum_{\substack{N^- +10h<p\le N+1  \\ p\equiv a\mod q }} \frac{1}{\sqrt{(N^+-p)(p-N^-)}} 
	&\ll N^{-1/4}\sum_{\substack{N^- +10h<n\le N+1  \\ n\equiv a\mod q }} \frac{1}{\sqrt{n-N^-}} .
}
We cover the range of summation by intervals of length $h$ to find that
\als{
\sum_{\substack{N^- +10h<p\le N+1  \\ p\equiv a\mod q }} \frac{1}{\sqrt{(N^+-p)(p-N^-)}} 
	&\ll N^{-1/4} \sum_{1\le j\le 2\sqrt{N}/h} \frac{1}{\sqrt{jh}} \cdot 
		\sum_{\substack{N^-+jh<n\le N^-+jh+h \\ n\equiv a\mod q }} 1 \\
	&\ll \frac{\sqrt{h}}{N^{1/4}q} \sum_{1\le j\le 2\sqrt{N}/h} \frac{1}{\sqrt{j}} 
		\ll \frac{1}{q} .
}
%\fcom{I think the sum $\sum_{N^-+jh<j\le (N+1)/h+1} \frac{1}{\sqrt{j}} $ is probably $\sum_{1\leq j\le \frac{2\sqrt N}{h}} \frac{1}{\sqrt{j}}$? Also is the below equation the same as the equation above?} \dcom{Corrected.} 
Similarly, we find that
\[
\sum_{\substack{N+1<p\le N^+ -10h  \\ p\equiv a\mod q }} \frac{1}{\sqrt{(N^+-p)(p-N^-)}} \ll \frac{1}{q} 
\]
too, which implies that
\als{
\sum_{\substack{N^-<p\le N^+ \\ p\equiv a\mod q }} \sqrt{|d(p)|}
	&= \frac{2\pi mk}{\phi(q)\log N}
	  + O\left( \frac{\sqrt{k}}{h\log N} \int_{N^-}^{N^+} E(y,h;q) dy  + \frac{h\sqrt{k}}{q} + \frac{h^{3/2}N^{1/4}}{mq}\right) .
}
Since $h^{3/2}=N^{3/4} (h/\sqrt{N})^{3/2}\le N^{3/4} (h/\sqrt{N})$, the lemma follows.
\end{proof}

%%%%%%%%%%%%%%%%%%%%%%%%%%%%%%%%%%%%%%%%%%%%%%%%%%%%%%%%%%%%%%%%%%%%%%%%%%%%%%%%%%%%%%%%%%%%%%%%%%%%%%%%%%%%%%%%%%%%%%%%%%%%%%%%%%%%%%%%%%%%%%%%%%%%%%%%%%%%%%%%%%%%%%%%%%%%%%%%%%%%%%%%%%

Using the above result and the results of Section \ref{local computations}, we will prove Theorem \ref{approximation of M(G)}. But first, we need to introduce some additional notation and state another intermediate result. Set
\eq{J_r(v) def}{
J_r(v)  = \{1\le j\le 2^{2v+3}:(j-mk)^2\equiv 4k+4^vr\pmod{2^{2v+3}}, \,   jm\equiv 0\pmod 2\}
}
and
\eq{J(v)}{
\J(v)  =   \frac{1}{2^{v_0-1}} \sum_{r\in\{0,1,4,5\}}  \frac{|J_r(v)|}{2-\leg{r}{2}},
\quad\text{where}\quad
v_0  =
	\begin{cases}
		2 	&\text{if}\ 2\nmid m,\cr
		3 	&\text{if}\ 2|m.
	\end{cases}
}
Finally, set
\[
\J = \sum_{\substack{v\ge0 \\ (2^v,k)=1}} \frac{\J(v)}{8^v} .
\]
Then we have the following formula.

%%%%%%%%%%%%%%%%%%%%%%%%%%%%%%%%%%%%%%%%%%%%%%%%%%%%%%%%%%%%%%%%%%%%%%%%%%%%%%%%%%%%%%%%%%%%%%%%%%%%%%%%%%%%%%%%%%%%%%%%%%%%%%%%%%%%%%%%%%%%%%%%%%%%%%%%%%%%%%%%%%%%%%%%%%%%%%%%%%%%%%%%%%

\begin{lma}\label{formula for J}
\[
\J =
	\begin{cases}
		\frac{2}{3} 	&\mbox{if $2\nmid mk$},\cr
		\frac{3}{2} 		&\mbox{if $2\mid (m,k)$},\cr
		1	&\mbox{if $2\mid mk$, $2\nmid (m,k)$}.
	\end{cases}
\]
\end{lma}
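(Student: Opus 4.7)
My plan is to compute $|J_r(v)|$ directly in each of the four parity cases for $(m,k)$, then sum the (eventually geometric) series $\J=\sum_v \J(v)/8^v$. As a preliminary reformulation I would set $J:=j-mk$, so that the congruence defining $J_r(v)$ becomes $J^2\equiv 4k+4^v r\pmod{2^{2v+3}}$, the side condition $jm\equiv 0\pmod 2$ becomes $J\equiv k\pmod 2$ when $m$ is odd and disappears when $m$ is even, and the constraint $(2^v,k)=1$ forces $v=0$ whenever $k$ is even (so only a single term survives then). The only arithmetic input I need is the standard 2-adic square-root count: writing $a=4^t u$ with $u$ odd, if $n-2t\ge 3$ the number of $x\pmod{2^n}$ with $x^2\equiv a\pmod{2^n}$ equals $2^{t+2}$ when $u\equiv 1\pmod 8$ and $0$ otherwise, with analogous statements for $n-2t\in\{1,2\}$ and for $v_2(a)\ge n$.

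For odd $m$ and odd $k$, $J$ must be odd, so $J^2\equiv 1\pmod 8$; since $4k+4^v r\equiv 0\pmod 4$ for every $v\ge 1$, all those terms vanish, and for $v=0$ the congruence $J^2\equiv 4k+r\pmod 8$ forces $r=5$ with $|J_5(0)|=4$. Plugging into $\J(0)$ with $v_0=2$ gives $\J=\tfrac12\cdot\tfrac43=\tfrac23$. In the two cases with $k$ even, only $v=0$ contributes, and a mod-$8$ enumeration of $J^2\equiv r\pmod 8$ (using $4k\equiv 0\pmod 8$) subject to the parity of $J$ yields $\J(0)=\tfrac32$ if $m$ is even and $\J(0)=1$ if $m$ is odd.

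The remaining, most involved, case is (even $m$, odd $k$), where all $v\ge 0$ contribute and there is no parity constraint on $J$. Direct enumeration gives $\J(0)=\tfrac56$; for $v\ge 1$ I would compute $v_2(4k+4^v r)$ and test whether the odd part is $\equiv 1\pmod 8$. The analysis splits according to $k\bmod 8$. When $k\equiv 1\pmod 8$, all four $r$-contributions stabilize at $|J_r(v)|=8$ once $v\ge 3$, so $\J(v)$ takes the constant value $\tfrac{14}{3}$ and
\[
\J=\frac56+\frac18\cdot 1+\frac1{64}\cdot 2+\sum_{v\ge 3}\frac{14/3}{8^v}=\frac{80+12+3+1}{96}=1.
\]
For $k\equiv 3,5,7\pmod 8$ the odd part is stuck at a non-square mod $8$ from some index onward, so the tail vanishes; a short check yields $\J=1$ in each such subcase.

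The main obstacle is not any single deep step but the bookkeeping for the (even $m$, odd $k$) case: one must carefully track $v_2(k+4^{v-1}r')$ and $v_2(k+4^v)$ in each subcase $r\in\{0,1,4,5\}$. A useful unifying observation is that $\{0,1,4,5\}$ is precisely the set of discriminant residues mod $8$ (those $r\equiv 0,1\pmod 4$), which keeps the four branches running in parallel.
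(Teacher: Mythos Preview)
Your proposal is correct and follows essentially the same approach as the paper: the paper also case-splits on the parities of $m$ and $k$, computes $\J(v)$ for each $v$ via the $2$-adic square-root count (its Lemma on solutions of $j^2\equiv d\pmod{2^e}$), and then sums the resulting series. Your substitution $J=j-mk$ and appeal to the general $2$-adic square-root formula streamline the bookkeeping slightly, but the values of $\J(v)$ you obtain in the even-$m$, odd-$k$ subcases (namely $\tfrac56,1,2,\tfrac{14}{3},\dots$ when $k\equiv1\pmod8$, etc.) coincide exactly with the paper's intermediate lemmas, and the final summations are identical.
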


%%%%%%%%%%%%%%%%%%%%%%%%%%%%%%%%%%%%%%%%%%%%%%%%%%%%%%%%%%%%%%%%%%%%%%%%%%%%%%%%%%%%%%%%%%%%%%%%%%%%%%%%%%%%%%%%%%%%%%%%%%%%%%%%%%%%%%%%%%%%%%%%%%%%%%%%%%%%%%%%%%%%%%%%%%%%%%%%%%%%%%%%%%

We postpone the proof of this lemma till the last section.

\begin{proof}[Proof of Theorem \ref{approximation of M(G)}] We will show the theorem with $8\epsilon\in(0,1/3]$ in place of $\epsilon$ and when $k$ is large enough in terms of $\epsilon$, which is clearly sufficient. Our starting point is Lemma \ref{formula for M(G)}, which states that
\[
M(G)
	=\sum_{\substack{N^-<p<N^+ \\ p \equiv 1 \pmod m }}
		\sum_{\substack{f^2\mid d(p), \, (f,k)=1  \\ d(p)/f^2\equiv 1,0\pmod 4  }}
	\frac{\sqrt{|d(p)|} \L(d(p)/f^2)  }{2\pi f}  ,
\]
where $N=m^2k$ and $d(p)=d_{m,k}(p)=((p-N-1)^2-4N)/m^2$ as usual. If $p=1+jm$, then $d(p)=(j-mk)^2-4k$.
Therefore, if $\ell$ is an odd prime dividing $k$, so that $(\ell,f)=1$ for $f$ as in the above sum, then
\[
\leg{d(p)/f^2}{\ell}
=\leg{d(p)}{\ell} = \leg{j}{\ell}^2 ,
\]
Next, we write $f=2^vg$ with $g$ odd and consider $r\in\{0,1,4,5\}$ such that $d(p)/f^2\equiv r\mod 8$.
Then we have that $\leg{d(p)/f^2}{2}=\leg{r}{2}$. Moreover, since $g^2\equiv1\mod{8}$, we have that
\[
d(p)/f^2
\equiv  d(p)/2^{2v}\mod{8},
\]
Therefore, the conditions $f^2|d(p)$ and $d(p)/f^2\equiv r\mod 8$ are equivalent to having $d(p)\equiv 4^v r\mod{2^{2v+3}}$ and $g^2|d(p)$. Setting
\[
\rho(g,d) = \prod_{\ell|g} \left(1-\frac{\leg{d}{\ell}}{\ell} \right)^{-1}
\]
then gives us that
\[
\L(d(p)/f^2)
	= \L((2kg)^2d(p))  \frac{\rho(g,d(p)/g^2)}{1-\leg{r}{2}/2} \prod_{\ell|k,\,\ell\nmid 2j}\left(1-\frac{1}{\ell}\right)^{-1}.
\]
Since
\[
\prod_{\ell|k,\,\ell\nmid 2j}\left(1-\frac{1}{\ell}\right)^{-1}
	= \sum_{\substack{a|k \\ (a,2j)=1}} \frac{\mu^2(a)}{\phi(a)},
\]
we deduce that
\als{
M(G)
	&= \sum_{r\in\{0,1,4,5\}} \frac{1}{2-\leg{r}{2}} \sum_{\substack{N^-<p<N^+ \\ p \equiv 1 \pmod m }}
		\sum_{\substack{a|k \\ (a,2j)=1}}
		\sum_{\substack{v\ge 0,\, (2^v,k)=1  \\ d(p)\equiv 4^v r\mod{2^{2v+3}} }}
		\sum_{\substack{g^2\mid d(p)\\ (g,2k)=1 }}  \frac{\mu^2(a)  \sqrt{|d(p)|}}{\pi 2^v\phi(a)g}  \\
	&\qquad \times 	\rho(g,d(p)/g^2) \L((2kg)^2d(p) )   .
}
We now use Lemma~\ref{lemmashortproduct} to replace the $L$-value $\L((2kg)^2d(p))$ by a suitably truncated product.
Arguing as in the proof of relation~\eqref{estimation of R}, we note that $\leg{(2kg)^2d(p)}{\cdot}$ is a character modulo
$2kg|d(p)|\le 16k^{5/2}$ with conductor not exceeding $|d(p)|\le 4k$.
Thus, we may apply Lemma~\ref{lemmashortproduct} with $Q=4k$ and $5\alpha$ in place of $\alpha$ to replace $\L((2kg)^2d(p))$ by $\L((2kg)^2d(p); z)$,
where we take $z=(\log(4k))^{200\alpha^2}$.
The result is that
\als{
M(G)
	&= \sum_{r\in\{0,1,4,5\}} \frac{1}{2-\leg{r}{2}} \sum_{\substack{N^-<p<N^+ \\ p=1+jm,\,j\ge1 }}
		\sum_{\substack{a|k \\ (a,2j)=1}}
		\sum_{\substack{ (2^v,k)=1  \\ d(p)\equiv 4^v r\mod{2^{2v+3}} }}
		\sum_{\substack{g^2\mid d(p) \\ (g,2k)=1 }}  \frac{\mu^2(a) \sqrt{|d(p)|}}{\pi 2^v\phi(a) g}  \\
	&\qquad \times  \rho(g,d(p)/g^2) \L((2kg)^2d(p) ; z )
		+O_{\alpha}\left(\frac{k}{(\log k)^{\alpha}} \right) .
}
Next, we notice that we can truncate the sums over $a,g$ and $v$ at the cost of a small error term.
More precisely, using the crude bound
\[
 	\rho(g,d(p)/g^2)\L((2kg)^2d(p);z) \ll \frac{g}{\phi(g)} \log(2kg|d(p)|)\ll (\log k)^2 ,
\]
we find that the contribution to $M(G)$ by those summands with $\max\{a,g,2^v\}>k^\epsilon$ is
\eq{divisor bound}{
\ll \frac{\sqrt{k} (\log k)^3}{k^\epsilon} \sum_{\substack{N^-<p<N^+ \\ p \equiv 1 \pmod m }}
		\sum_{\substack{a|k \\ (2^vg)^2|d(p)}} 1
	\ll_\epsilon k^{(1-\epsilon)/2} \sum_{\substack{ N^-<n<N^+ \\  n\equiv 1\mod m }} 1
	 \ll k^{1-\epsilon/2}
}
by the bound $\tau(n)\ll_\delta n^\delta$, with $\delta < \epsilon/4$. Moreover,
\[
	\L((2kg)^2d(p) ; z)
	= \sum_{\substack{P^+(n)\le z \\ (n,2kg)=1}} \frac{\leg{d(p)}{n} }{n}
	= \sum_{\substack{P^+(n)\le z,\, n\le k^\epsilon \\ (n,2kg)=1}} \frac{ \leg{d(p)}{n} }{n}
		+ O_{\epsilon,\alpha}\left( (\log k)^{-\alpha-10}\right)
\]
by Lemma \ref{smooth}. Therefore,
\als{
M(G) &= \sum_{r\in\{0,1,4,5\}} \frac{1}{2-\leg{r}{2}} \sum_{\substack{a|k,\, a\le k^\epsilon \\ (a,2)=1}}
		\sum_{\substack{2^v\le k^\epsilon \\ (2^v,k)=1}}
		\sum_{\substack{g\le k^\epsilon \\ (g,2k)=1 }}
			 \sum_{\substack{P^+(n)\le z,\, n\le k^\epsilon \\ (n,2kg)=1}} \frac{\mu^2(a)}{\pi 2^v\phi(a) g n}  \\
		&\qquad \times
			\sum_{\substack{N^-<p<N^+ \\ p=1+jm,\,j\ge1 \\ (a,j)=1,\, g^2|d(p) \\ d(p)\equiv 4^v r \mod{2^{2v+3}} }}
				\rho(g,d(p)/g^2) \leg{d(p)}{n}\sqrt{|d(p)|}
		 	 + O_{\alpha,\epsilon} \left(\frac{k}{(\log k)^{\alpha}}\right) .
}
We note that if $d(p)/g^2\equiv b\mod{g}$, then $\leg{d(p)/g^2}{\ell} = \leg{b}{\ell}$ for all $\ell|g$ and consequently,
$\rho(g,d(p)/g^2)=\rho(g,b)$. So summing over possible choices for $d(p)/g^2\mod g$ and $d(p)\mod n$, we deduce that
\als{
M(G) &= \sum_{r\in\{0,1,4,5\}} \frac{1}{2-\leg{r}{2}} \sum_{\substack{a|k,\, a\le k^\epsilon \\ (a,2)=1}}
		\sum_{\substack{2^v\le k^\epsilon \\ (2^v,k)=1}}
		\sum_{\substack{g\le k^\epsilon \\ (g,2k)=1 }}
			\sum_{\substack{P^+(n)\le z,\, n\le k^\epsilon \\ (n,2kg)=1}} \frac{\mu^2(a)}{\pi 2^v \phi(a) g n }   \\
		&\qquad \times	\sum_{b=1}^g \rho(g,b)
			\sum_{c=1}^n\leg{c}{n}
		 S_r(v,a,g,b,n,c) + O_{\alpha,\epsilon} \left(\frac{k}{(\log k)^{\alpha}}\right) ,
}
where
\[
S_r(v,a,g,b,n,c) : = \sum_{\substack{N^-<p\le N^+ \\ p=1+jm,\,j\ge 1,\, (j,a)=1 \\ d(p)\equiv bg^2\mod{g^3} \\
			d(p)\equiv 4^vr \mod{2^{2v+3}},\, d(p)\equiv c\mod{n}  }}  \sqrt{|d(p)|} .
\]
We write $p=1+jm$ and note that $(1+jm,2agn)=1$ if $k$ is large enough, since $2agn\le 2k^{3\epsilon}\le 2k^{1/8}$ by assumption, and $p>N^-=(m\sqrt{k}-1)^2$. Moreover, with this notation we have that $d(p)=\Delta(j):=(j-mk)^2-4k$. So, if we set
\als{
J_r(v,a,g,b,n,c) =
	\left\{ j\mod{2^{2v+3} a g^3n}:
		\begin{array}{rl}
		 \Delta(j)  \equiv 4^v r  \pmod{2^{2v+3} },& \Delta(j) \equiv b g^2 \pmod{g^3}, \\
		 \Delta(j)\equiv c\mod{n},& (j,a)=1,\\
		(1+jm, agn)=1,& jm\equiv 0\pmod2
		\end{array}\right\},
}
then we find that
\[
S_r(v,a,g,b,n,c) = \sum_{j\in J_r(v,a,g,b,n,c)}
	 \sum_{\substack{N^-<p\le N^+ \\ p\equiv 1+jm\mod{2^{2v+3}ag^3nm}}}
		\sqrt{|d(p)|} .
\]
Applying Lemma \ref{prime sum} with $h$ as in the statement of the theorem, we deduce that
\als{
\frac{S_r(v,a,g,b,n,c)}{ |J_r(v,a,g,b,n,c)|}
	&= \frac{2\pi mk}{\phi(2^{2v+3}ag^3nm)\log N} \\
	&\quad	+ O\left( \frac{k}{4^vag^3n(\log k)^{\alpha+1}}
				+ \frac{\sqrt{k}}{h\log k} \int_{N^-}^{N^+} E(y,h;2^{2v+3}ag^3nm) \dee y \right) ,
}
by our assumption that $h\le m\sqrt{k}/(\log k)^{\alpha+1}$ and that $m\le \sqrt{k}$. In order to compute the contribution of the above error term to $M(G)$, we note that
\als{
\sum_{b=1}^g \rho(b,g) \sum_{c=1}^n |J_r(b,v,g,a,n,c)|
	&\le \sum_{b=1}^g \sum_{c=1}^n
		\sum_{ \substack{  j\mod {2^{2v+3} ag^3n} \\ \Delta(j)\equiv bg^2\pmod{g^3} \\
		 \Delta(j)\equiv 4^vr \pmod{2^{2v+3}} \\ 2|jm,\ \Delta(j)\equiv c \mod{n} }} \frac{g}{\phi(g)}
		=  \sum_{ \substack{  j\mod {2^{2v+3} a g^3n} \\ g^2 | \Delta(j),\, 2|jm \\   \Delta(j)\equiv 4^vr\pmod{2^{2v+3}} }} 					\frac{g}{\phi(g)}  \\
	&= \frac{g}{\phi(g)} agn \sum_{ \substack{  j\mod {2^{2v+3} g^2} \\ 2|jm,\ g^2 | \Delta(j) \\   \Delta(j)\equiv 4^vr\pmod{2^{2v+3}} }} 1
		\ll \frac{ag^2n}{\phi(g)} \cdot \tau(g) \cdot   |J_r(v)|
}
by the Chinese remainder theorem and Lemma \ref{generic quad lemma}, where $J_r(v)$ is defined by \eqref{J_r(v) def}. Since we also have that $|J_r(v)|\ll \J(v) \ll1$ by Lemmas \ref{prime 2 lma1} and \ref{prime 2 lma2} below, we conclude that
\als{
M(G) &= \frac{2mk}{\log N}\sum_{r\in\{0,1,4,5\}} \frac{1}{2-\leg{r}{2}}
		\sum_{\substack{a|k,\, a\le k^\epsilon \\ (a,2)=1}}
		\sum_{\substack{2^v\le k^\epsilon \\ (2^v,k)=1}}
		\sum_{\substack{g\le k^\epsilon \\ (g,2k)=1 }}
			\sum_{\substack{P^+(n)\le z,\, n\le k^\epsilon \\ (n,2kg)=1}}
			 \frac{\mu^2(a)}{2^{3v+v_0}\phi(a)\phi(g^4an^2m)} \\
	&\qquad\times \sum_{b=1}^g \rho(g,b)
	 \sum_{c=1}^n \leg{c}{n}|J_r(b,v,g,a,n,c)|
		+ O_{\alpha,\epsilon}\left( \frac{k}{(\log k)^\alpha} + E\right),
}
where $v_0$ is defined by \eqref{J(v)} and
\[
E:= \frac{\sqrt{k}}{h} \sum_{q\le 8k^{7\epsilon}} \tau_3(q) \int_{N^-}^{N^+} E(y,h;mq) \dee y ,
\]
since, for any $q\in\N$, we have that
\[
\sum_{\substack{q=2^{2v+3}ag^3n \\ a|k,\ (a,2)=(gn,2k)=1}} \tau(g) \le \sum_{g|q} \tau(g) = \tau_3(q) .
\]
If we set
\[
I(g,b) = \#\{1\le j\le g^3: \Delta(j)\equiv bg^2\pmod{g^3}, \,  (1+jm,g)=1\}
\]
and
\[
F(a) = \#\{ 1\le j\le a: (j,a)=1,\,(1+jm,a)=1\} = \prod_{\ell^w \| a} \ell^{w-1} \left(\ell-1-\leg{m}{\ell}^2\right) ,
\]
then the Chinese remainder theorem implies that
\als{
\sum_{c=1}^n \leg{c}{n} |J_r(v,a,g,b,n,c)|
	&= F(a) \cdot |J_r(v)| \cdot  I(g,b) \sum_{c=1}^n \leg{c}{n} 	
			\sum_{\substack{ j\mod n \\ \Delta(j)\equiv c\mod{n} \\(1+jm,n)=1 }}1 \\
	&= F(a) \cdot |J_r(v)|\cdot  I(g,b) \cdot T(n),
}
where $T(n)$ is defined by \eqref{T(n) def}. Therefore,
\[
M(G) = \frac{mk}{\phi(m)\log N} S_1S_2S_3+ O_{\alpha,\epsilon}\left( \frac{k}{(\log k)^\alpha} + E\right),
\]
where
\[
S_1 = \sum_{r\in\{0,1,4,5\}} \frac{2}{2-\leg{r}{2}}
		\sum_{\substack{2^v\le k^\epsilon \\ (2^v,k)=1}} \frac{|J_r(v)|}{2^{3v+v_0}}
	=\J  + O(k^{-\epsilon}),
\]
by the trivial estimate $|J_r(v)|\ll 4^v$,
\als{
S_2 = \sum_{\substack{a|k,\,a\le k^\epsilon \\(a,2)=1}} \frac{\mu^2(a) F(a)}{\phi(a)a}
		\prod_{\ell|a,\,\ell\nmid m} \frac{\ell}{\ell-1}
	&= \prod_{\substack{\ell|k\\ \ell\ne 2}} \left( 1+ \frac{\ell-1-\leg{m}{\ell}^2}{(\ell-1)(\ell-\leg{m}{\ell}^2)}\right)
		+ O(k^{-\epsilon/2}) \\
	&= \prod_{\substack{\ell|k\\ \ell\ne 2}} \frac{\ell^2- \leg{m}{\ell}^2 \ell -1 }{(\ell-1)(\ell-\leg{m}{\ell}^2)} + O(k^{-\epsilon/2})
}
by arguing as in relation \eqref{divisor bound}, and
\[
S_3 = \sum_{\substack{g\le k^\epsilon \\ (g,2k)=1 }} \sum_{b=1}^g \frac{\rho(g,b) I(g,b) S_4(g) }{g^4}
		 \prod_{\ell|g,\,\ell\nmid m} \frac{\ell}{\ell-1}
\]
with
\[
S_4(g) = \sum_{\substack{P^+(n)\le z,\, n\le k^\epsilon \\ (n,2kg)=1}} \frac{T(n)}{n^2}\prod_{\ell|n,\,\ell\nmid m}\frac{\ell}{\ell-1} .
\]
In the above,  to factor $\phi(g^4 a n^2 m)$, we have used the identity
$$
\phi(g^4 a n^2 m) = \phi(m) g^4 a n^2  \prod_{\ell|g,\,\ell\nmid m} \frac{\ell -1}{\ell} \;
 \prod_{\ell|a,\,\ell\nmid m} \frac{\ell - 1}{\ell} \;
 \prod_{\ell|n,\,\ell\nmid m} \frac{\ell - 1}{\ell}
$$
which holds since $a,n$ and $g$ are pairwise coprime. Note that
\als{
I(g,b) &= \prod_{\ell^w\| g} \#\{j\mod {\ell^{3w}}: (j-mk)^2\equiv 4k+bg^2\mod{\ell^{3w}},\, (1+jm,\ell)=1\}  \\
		& = \prod_{\ell|g} \left(1+ \leg{(N+1)^2-(4k+bg^2)m^2}{\ell}^2  \leg{4k+bg^2}{\ell}\right) \\
		& =\left(1+ \leg{N-1}{\ell}^2\leg{k}{\ell} \right)^{\omega(g)}
}
by Lemma \ref{generic quad lemma}, which is applicable here because $4k+bg^2\equiv 4k\not\equiv 0\mod\ell$ for all primes $\ell|g$. So we see that $I(g,b)$ is independent of $b$, which implies that
\als{
\sum_{b=1}^g \rho(g,b) I(g,b)
	&= I(g,0)  \prod_{\ell^w\|g} \left( \sum_{b=1}^{\ell^w} \frac{1}{1-\leg{b}{\ell}/\ell} \right) \\
	&= I(g,0)  \prod_{\ell^w\|g} \left(\ell^{w-1}+ \ell^{w-1}\frac{\ell-1}{2}\frac{1}{1-1/\ell}
		+\ell^{w-1}\frac{\ell-1}{2}\frac{1}{1+1/\ell}\right) \\
	&= g I(g,0) \prod_{\ell|g} \frac{\ell^2+\ell+1}{\ell(\ell+1)}  .
}
Thus we conclude that
\[
S_3 = \sum_{\substack{g\le k^\epsilon \\ (g,2k)=1 }}
		 \frac{S_4(g)}{g^3} \prod_{\ell|g}  \frac{(1+\leg{N-1}{\ell}^2\leg{k}{\ell})(\ell^2+\ell+1)}{(\ell-\leg{m}{\ell}^2)(\ell+1)}.
\]
Moreover, if $P(\ell)$ is as in Corollary \ref{formula for P(ell)}, then we have that
\[
S_4(g) = \frac{P}{\prod_{\ell|g} P(\ell)} \left( 1
	+ O\left(\frac{1}{(\log k)^{\alpha+1}}\right) \right) ,
\quad\text{where}\quad
P : = \prod_{\ell\nmid 2k} P(\ell) .
\]
Therefore
\als{
S_3\left( 1+ O\left(\frac{1}{(\log k)^{\alpha+1}}\right) \right)
	&= P \cdot \prod_{\ell\nmid2k} \left( 1 + \sum_{w\ge1} \frac{(1+\leg{N-1}{\ell}^2\leg{k}{\ell})(\ell^2+\ell+1)}{\ell^{3w}(\ell-\leg{m}{\ell}^2)(\ell+1)P(\ell)}\right) \\
	&= \prod_{\ell\nmid2k} \left( P(\ell) +  \frac{1+\leg{N-1}{\ell}^2\leg{k}{\ell}}{(\ell^{2}-1)(\ell-\leg{m}{\ell}^2)} \right) \\
	&= \prod_{\ell\nmid2k}  \frac{\ell^3 - \leg{m}{\ell}^2 \ell^2 - (1+\leg{m(N-1)}{\ell}^2)\ell }{(\ell^{2}-1)(\ell-\leg{m}{\ell}^2)}\\
	&= \prod_{\ell\nmid2N}  \left( 1- \frac{\ell \leg{N-1}{\ell}^2+1}{(\ell^{2}-1)(\ell-1)}  \right)   .
}
Consequently,
\als{
M(G) &= \frac{\J mk}{\phi(m)\log N}  \prod_{\ell\nmid2N}  \left( 1- \frac{\ell \leg{N-1}{\ell}^2+1}{(\ell^{2}-1)(\ell-1)}  \right)
		 \prod_{\substack{\ell|k \\ \ell>2}} \left( 1+ \frac{\ell-1-\leg{m}{\ell}^2}{(\ell-1)(\ell-\leg{m}{\ell}^2)}\right)   \\
		&\quad + O_{\alpha,\epsilon}\left( \frac{k}{(\log k)^\alpha} + E\right) .
}
So the theorem follows by the above estimates together with Lemmas \ref{Aut(G)} and \ref{formula for J}.
\end{proof}

%%%%%%%%%%%%%%%%%%%%%%%%%%%%%%%%%%%%%%%%%%%%%%%%%%%%%%%%%%%%%%%%%%%%%%%%%%%%%%%%%%%%%%%%%%%%%%%%%%%%%%%%%%%%%%%%%%%%%%%%%%%%%%%%%%%%%%%%%%%%%%%%%%%%%%%%%%%%%%%%%%%%%%%%%%%%%%%%%%%%%%%%%%%%%%%%%%%%%%%%%%%%%%%%%%%%%%%%%%%%%%%%%%%%%%%%%%%%%%%%%%%%%%%%%%%%%%%%%%%%%%%%%%%%%%%%%%%%%%%%%%%%%%%%%%%%%%%%%%%%%%%%%%%%%%%%%%%%%%%%%%%%%%%%%%%%%%%%%%%%%%%%%%%%%%%%%%%%%%%%%%%%%%%%%%%%

\section{Powers of 2}\label{2}

The goal of this section is to show Lemma \ref{formula for J}, which gives the value of
\begin{equation*}
\J = \sum_{\substack{v\ge0 \\ (2^v,k)=1}} \frac{\J(v)}{8^v},
\end{equation*}
where
\begin{equation*}
\J(v)  =   \frac{1}{2^{v_0-1}} \sum_{r\in\{0,1,4,5\}}  \frac{|J_r(v)|}{2-\leg{r}{2}},
\quad\quad
v_0  =
	\begin{cases}
		2 	&\text{if}\ 2\nmid m,\cr
		3 	&\text{if}\ 2|m,
	\end{cases}
\end{equation*}
and
\begin{equation*}
J_r(v)  = \{1\le j\le 2^{2v+3}:(j-mk)^2\equiv 4k+4^vr\pmod{2^{2v+3}}, \,   jm\equiv 0\pmod 2\}.
\end{equation*}
We start with the following standard lemma.

%%%%%%%%%%%%%%%%%%%%%%%%%%%%%%%%%%%%%%%%%%%%%%%%%%%%%%%%%%%%%%%%%%%%%%%%%%%%%%%%%%%%%%%%%%%%%%%%%%%%%%%%%%%%%%%%%%%%%%%%%%%%%%%%%%%%%%%%%%%%%%%%%%%%%%%%%%%%%%%%%%%%%%%%%%%%%%%%%%%%%%%%%%

\begin{lma}\label{generic quad lemma - prime 2}
We have that
\[
\#\{j\in\Z/8\Z : j^2\equiv d\pmod{8}\} =
	\begin{cases}
		2 &\text{if}\ d\equiv 0,4\mod 8,\\
		4 &\text{if}\ d\equiv 1\mod 8,\\
		0 &\text{otherwise}.
	\end{cases}
\]
Moreover, if $d$ is odd and $e\ge3$, then
\[
\#\{j\in\Z/2^e\Z : j^2\equiv d\pmod{2^e}\} =
	\begin{cases}
		4 	&\text{if}\ d\equiv 1\mod 8, \\
		0	&\text{otherwise}.
	\end{cases}
\]
\end{lma}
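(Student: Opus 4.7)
The first assertion is a finite check: I would simply enumerate the squares of $0, 1, \ldots, 7$ modulo $8$, obtaining the list $0,1,4,1,0,1,4,1$, and then read off the stated counts directly from the fibers.

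For the second assertion, my plan is to invoke the classical isomorphism $(\Z/2^e\Z)^\times \cong \Z/2\Z \times \Z/2^{e-2}\Z$, valid for all $e\ge3$. Viewed as an endomorphism of this finite abelian $2$-group, the squaring map has kernel equal to the $2$-torsion subgroup, which has order exactly $4$. Hence every element in the image has precisely $4$ preimages, and the image has index $4$, so it contains $2^{e-3}$ elements. To identify the image I would use that reduction modulo $8$ carries squares to squares, combined with the first part: the only square in $(\Z/8\Z)^\times$ is $1$, so the image lies inside the set of units $\equiv 1\pmod 8$. A quick size comparison, since that set also has cardinality $2^{e-3}$, forces equality, and the claimed dichotomy follows immediately.

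I do not anticipate any substantial obstacle, as the content is entirely classical. The one mildly delicate point is that the usual formulation of Hensel's lemma does not directly lift solutions of $j^2\equiv d\pmod 8$ to higher $2$-power moduli, because the derivative $2j$ fails to be a unit modulo $2$; the structural argument above bypasses this issue cleanly. If one preferred to avoid quoting the structure of $(\Z/2^e\Z)^\times$, one could instead proceed by explicit induction on $e$, lifting each $j$ mod $2^e$ to either $j$ or $j+2^e$ mod $2^{e+1}$ and tracking which lifts are compatible with the target residue of $d$ mod $2^{e+1}$; a short bookkeeping check then shows that the count of $4$ is preserved. I would default to the group-theoretic route, as it is the most transparent.
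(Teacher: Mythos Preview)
Your proof is correct. The paper does not actually prove this lemma at all: it is introduced with the phrase ``We start with the following standard lemma'' and then used without further justification. Your enumeration for the mod $8$ case and your group-theoretic argument for $e\ge3$ (via the structure of $(\Z/2^e\Z)^\times$ and a cardinality comparison) are both sound and entirely standard, which is consistent with the paper's decision to omit a proof.
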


%%%%%%%%%%%%%%%%%%%%%%%%%%%%%%%%%%%%%%%%%%%%%%%%%%%%%%%%%%%%%%%%%%%%%%%%%%%%%%%%%%%%%%%%%%%%%%%%%%%%%%%%%%%%%%%%%%%%%%%%%%%%%%%%%%%%%%%%%%%%%%%%%%%%%%%%%%%%%%%%%%%%%%%%%%%%%%%%%%%%%%%%%%

We shall use the above lemma to calculate $|J_r(v)|$ and $\J(v)$ when $(2^v,k)=1$.
First, we note that if $v\ge1$, then $k$ must be odd and
\eq{J_r(v) alt}{
|J_r(v)| = \begin{cases}
			2\cdot  \#\{    j\mod{ 2^{2v+1} } : j^2\equiv k+4^{v-1} r\mod{2^{2v+1}} \} &\text{if}\ 2|m,\\
						0 	&\text{if}\ 2\nmid m.
			\end{cases}
}
Indeed, when $v\ge1$, the relation $(j-mk)^2\equiv 4k+4^vr\mod{2^{2v+3}}$ implies that $2|(j-mk)$. Since $k$ is odd and we also have that $jm\equiv 0\mod{2}$, we deduce that $2\mid (m,j)$.
Hence, $|J_r(v)|=0$ when $2\nmid m$.
Assuming that $2\mid m$, we write $j=mk+2j'$ and find that
\als{
|J_r(v)| &=  \#\{j'\mod{2^{2v+2}} : j'^2\equiv k+4^{v-1} r\mod{2^{2v+1}} \}  \\
	&=  2\cdot  \#\{j\mod{2^{2v+1} } : j^2\equiv k+4^{v-1} r\mod{2^{2v+1}} \} ,
}
as claimed.

%%%%%%%%%%%%%%%%%%%%%%%%%%%%%%%%%%%%%%%%%%%%%%%%%%%%%%%%%%%%%%%%%%%%%%%%%%%%%%%%%%%%%%%%%%%%%%%%%%%%%%%%%%%%%%%%%%%%%%%%%%%%%%%%%%%%%%%%%%%%%%%%%%%%%%%%%%%%%%%%%%%%%%%%%%%%%%%%%%%%%%%%%%

\begin{lma}\label{prime 2 lma1} Let $v\ge0$ with $(2^v,k)=1$. If $m$ is odd, then
\[
\J(v) =
	\begin{cases}
		1			&\mbox{if $v=0$ and $2|k$},\\
		\frac{2}{3}		&\mbox{if $v=0$ and $2\nmid k$},\\
		0			&\text{if $v\ge1$ and $2\nmid k$}.
	\end{cases}
\]
\end{lma}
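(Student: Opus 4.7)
The proof will be a direct case analysis, splitting on the parity of $k$ and on whether $v=0$ or $v\ge 1$. Since $m$ is odd, I have $v_0 = 2$, so $2^{v_0-1}=2$, and the condition $jm\equiv 0\pmod 2$ simply forces $j$ to be even. The Kronecker symbol values $\leg{0}{2}=\leg{4}{2}=0$, $\leg{1}{2}=1$, and $\leg{5}{2}=-1$ will be used at the end to combine the counts.

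First I will dispose of the case $v\ge 1$, which by hypothesis forces $k$ to be odd. For this range I invoke the alternative form \eqref{J_r(v) alt} that was established just before the lemma; it states that $|J_r(v)|=0$ whenever $v\ge 1$ and $m$ is odd, so $\J(v)=0$ without further computation.

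For $v=0$, the defining congruence becomes $(j-mk)^2\equiv 4k+r\pmod 8$ with $j$ even and $1\le j\le 8$. I will split on the parity of $k$.
\begin{itemize}
\item If $k$ is odd, then $4k\equiv 4\pmod 8$ and $j-mk$ is odd, so $(j-mk)^2\equiv 1\pmod 8$; this forces $r\equiv 5\pmod 8$, so among $r\in\{0,1,4,5\}$ only $r=5$ contributes, and every one of the four even residues $j\in\{2,4,6,8\}$ satisfies the congruence, giving $|J_5(0)|=4$.
\item If $k$ is even, then $4k\equiv 0\pmod 8$ and $j-mk$ is even, so $(j-mk)^2\in\{0,4\}\pmod 8$; this rules out $r\in\{1,5\}$. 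For $r=0$ the condition becomes $j\equiv mk\pmod 4$, and for $r=4$ it becomes $j\equiv mk+2\pmod 4$. Each selects exactly two of the four even residues, giving $|J_0(0)|=|J_4(0)|=2$.
\end{itemize}

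Plugging these counts into $\J(0)=\tfrac{1}{2}\sum_{r\in\{0,1,4,5\}}|J_r(0)|/(2-\leg{r}{2})$ gives $\J(0)=\tfrac{1}{2}\cdot \tfrac{4}{3}=\tfrac{2}{3}$ when $k$ is odd, and $\J(0)=\tfrac{1}{2}(\tfrac{2}{2}+\tfrac{2}{2})=1$ when $k$ is even, matching the stated formula. There is no real obstacle here; the only place care is required is in tracking parities of $j-mk$ and the reduction of $4k$ modulo $8$, since the conclusions hinge on these being compatible with the square residues $\{0,1,4\}\pmod 8$.
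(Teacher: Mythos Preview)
Your proof is correct and follows essentially the same approach as the paper: the $v\ge1$ case is handled by invoking \eqref{J_r(v) alt}, and the $v=0$ case is a direct parity analysis of $(j-mk)^2\pmod 8$ restricted to even $j$. The only cosmetic difference is that the paper substitutes $j=2j'$ (and, for even $k$, further sets $z=j'-mk/2$) before counting, whereas you work directly with $j$; the underlying computation is identical.
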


%%%%%%%%%%%%%%%%%%%%%%%%%%%%%%%%%%%%%%%%%%%%%%%%%%%%%%%%%%%%%%%%%%%%%%%%%%%%%%%%%%%%%%%%%%%%%%%%%%%%%%%%%%%%%%%%%%%%%%%%%%%%%%%%%%%%%%%%%%%%%%%%%%%%%%%%%%%%%%%%%%%%%%%%%%%%%%%%%%%%%%%

\begin{proof} The case $v\ge1$ follows by \eqref{J_r(v) alt}. Assume now that $v=0$. Since $m$ is odd, the condition $jm\equiv 0\pmod 2$ implies that every $j\in J_r(v)$ is even. Writing $j=2j'$, we deduce that
\[
|J_r(0)| = \#\{j'\mod 4: (2j'-mk)^2\equiv 4k+r\mod{8}\}
\]
If $k$ is odd, then we must have that $(2j'-mk)^2-4k\equiv -3\mod8$ and thus $r=5$, in which case $|J_r(0)|=4$; otherwise $|J_r(v)|=0$. So
\[
\J(0) = \frac{1}{2} \cdot  \frac{4}{2-(-1)} = \frac{2}{3} .
\]

Finally, assume that $k$ is even. Writing $z=j'-mk/2$, our task reduces to counting solutions to $4z^2\equiv r\mod 8$ with $1\le z\le 4$. If $r\in\{1,5\}$, then there are no such solutions, whereas if $r\in\{0,4\}$, then there are precisely two such solutions. Consequently, when $m$ is odd and $k$ is even,
\[
\J(0)=\frac{1}{2} \left( \frac{2}{2-0} + \frac{2}{2-0} \right) = 1 ,
\]
and the lemma follows in this case too.
\end{proof}

%%%%%%%%%%%%%%%%%%%%%%%%%%%%%%%%%%%%%%%%%%%%%%%%%%%%%%%%%%%%%%%%%%%%%%%%%%%%%%%%%%%%%%%%%%%%%%%%%%%%%%%%%%%%%%%%%%%%%%%%%%%%%%%%%%%%%%%%%%%%%%%%%%%%%%%%%%%%%%%%%%%%%%%%%%%%%%%%%%%%%%%

\begin{lma}\label{prime 2 lma2} Let $v\ge0$ with $(2^v,k)=1$, and suppose that $2|m$.
If $2|k$, then
\[
\J(0) =  \frac{3}{2} .
\]
If $k\equiv 1\pmod 8$, then
\[
\J(v) = \begin{cases}
		\frac{5}{6} 		&\mbox{if $v=0$}, \\
		 1			&\mbox{if $v=1$}, \\
		 2			&\mbox{if $v=2$}, \\
		\frac{14}{3} 	&\mbox{if $v\ge3$}.
	\end{cases}
\]
If $k\equiv 3,7\pmod 8$, then
\[
\J(v) = \begin{cases}
		\frac{5}{6} 		&\mbox{if $v=0$}, \\
		 \frac{4}{3}		&\mbox{if $v=1$}, \\
		0 			&\mbox{if $v\ge2$}.
	\end{cases}
\]
If $k\equiv 5\pmod 8$, then
\[
\J(v) = \begin{cases}
		\frac{5}{6} 		&\mbox{if $v=0$}, \\
		 1			&\mbox{if $v=1$}, \\
		 \frac{8}{3}		&\mbox{if $v=2$}, \\
		0 			&\mbox{if $v\ge3$}.
	\end{cases}
\]
\end{lma}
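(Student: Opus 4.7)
The plan is to carry out direct case analysis using Lemma \ref{generic quad lemma - prime 2}, separating the $v=0$ case from $v\ge 1$ because the reduction formula \eqref{J_r(v) alt} only applies in the latter range.

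For $v=0$ and $2\mid m$, the parity condition $jm\equiv 0\pmod 2$ is automatic, so the substitution $x=j-mk$ gives $|J_r(0)|=\#\{x\in\Z/8\Z:x^2\equiv 4k+r\pmod 8\}$. I would then split according to the parity of $k$: when $k$ is even, $4k\equiv 0\pmod 8$ and the first part of Lemma \ref{generic quad lemma - prime 2} yields $(|J_0(0)|,|J_1(0)|,|J_4(0)|,|J_5(0)|)=(2,4,2,0)$, while when $k$ is odd the shift is $4\pmod 8$ and the counts become $(2,0,2,4)$. Plugging these values into the weighted sum
\[
\J(0) = \frac14\left(\frac{|J_0(0)|}{2}+|J_1(0)|+\frac{|J_4(0)|}{2}+\frac{|J_5(0)|}{3}\right)
\]
produces $3/2$ and $5/6$ respectively.

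For $v\ge 1$, the coprimality condition $(2^v,k)=1$ forces $k$ to be odd, and the reduction \eqref{J_r(v) alt} converts the problem into counting square roots of $k+4^{v-1}r$ modulo $2^{2v+1}$. For $v=1$, the modulus is $8$, and I would tabulate the four residues $k\bmod 8$ against $r\in\{0,1,4,5\}$ using the first part of Lemma \ref{generic quad lemma - prime 2}; the weighted sum produces $1,\ 4/3,\ 1,\ 4/3$ as claimed. For $v\ge 2$, the integer $k+4^{v-1}r$ is odd, so the second part of Lemma \ref{generic quad lemma - prime 2} yields exactly $4$ square roots modulo $2^{2v+1}$ precisely when $k+4^{v-1}r\equiv 1\pmod 8$ and none otherwise; in particular, $|J_r(v)|$ is either $0$ or $8$ in every such case.

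For $v=2$, the criterion $4r\equiv 1-k\pmod 8$ determines which $r\in\{0,1,4,5\}$ contribute: both even $r$ when $k\equiv 1\pmod 8$, neither when $k\equiv 3,7\pmod 8$, and both odd $r$ when $k\equiv 5\pmod 8$, giving the stated values $2,\ 0,\ 8/3,\ 0$. For $v\ge 3$ we have $4^{v-1}r\equiv 0\pmod 8$ uniformly in $r$, so the criterion collapses to $k\equiv 1\pmod 8$; in that case all four values of $r$ yield $|J_r(v)|=8$ and the weighted sum equals $\frac14(4+8+4+8/3)=14/3$, while for $k\equiv 3,5,7\pmod 8$ every $|J_r(v)|$ vanishes. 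The computation presents no substantive obstacle beyond the bookkeeping required to handle the four residues of $k\bmod 8$ in parallel; the one step that truly matters is the reduction \eqref{J_r(v) alt}, which has already been established earlier in the section.
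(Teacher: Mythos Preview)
Your proposal is correct and follows essentially the same approach as the paper's proof: both handle $v=0$ via the direct substitution and the first part of Lemma~\ref{generic quad lemma - prime 2}, and both invoke relation~\eqref{J_r(v) alt} for $v\ge1$, splitting further into $v=1$ (modulus $8$) and $v\ge2$ (odd argument, second part of Lemma~\ref{generic quad lemma - prime 2}). The only difference is cosmetic ordering---the paper treats $v\ge2$ before $v=1$---but the computations and case tabulations are identical.
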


%%%%%%%%%%%%%%%%%%%%%%%%%%%%%%%%%%%%%%%%%%%%%%%%%%%%%%%%%%%%%%%%%%%%%%%%%%%%%%%%%%%%%%%%%%%%%%%%%%%%%%%%%%%%%%%%%%%%%%%%%%%%%%%%%%%%%%%%%%%%%%%%%%%%%%%%%%%%%%%%%%%%%%%%%%%%%%%%%%%%%%%

\begin{proof}
First, we calculate $|J_r(0)|$. Note that the condition $jm\equiv 0\mod2$ is trivially satisfied now since $2|m$.
Therefore, a change of variable and Lemma \ref{generic quad lemma - prime 2} imply that
\eq{J_r(0)}{
|J_r(0)| = \#\{j\mod 8: j^2\equiv 4k+r\mod{8}\} =
	\begin{cases}
		2 &\text{if}\ 4k+r\equiv 0,4\mod 8,\\
		4 &\text{if}\ 4k+r\equiv 1\mod 8,\\
		0 &\text{if}\ 4k+r\equiv 5\mod 8 .
	\end{cases}
}
Thus,
\[
\J(0) = \begin{cases}
		\frac{1}{4}\left( \frac{2}{2-0}+\frac{4}{2-1} + \frac{2}{2-0} + \frac{0}{2-(-1)} \right) = \frac{3}{2}
			&\text{if}\ 2|k,\\
		\frac{1}{4} \left(\frac{2}{2-0}+\frac{0}{2-1} + \frac{2}{2-0}+ \frac{4}{2-(-1)}\right) =  \frac{5}{6}
			&\text{if}\ 2\nmid k .
	\end{cases}
\]

Next assume that $v\ge1$, and note that the condition $(2^v,k)=1$ means that we only need consider this case when $k$ is odd.
By relation \eqref{J_r(v) alt}, we have that
\[
|J_r(v)| = 2\cdot \#\{ j\mod{2^{2v+1}}: j^2\equiv k+4^{v-1} r\mod{2^{2v+1}} \}.
\]
Now if $v\ge2$, then Lemma~\ref{generic quad lemma - prime 2} implies that $|J_r(v)|=2\cdot 4=8$ or $|J_r(v)|=0$
according to whether $k+4^{v-1}r\equiv 1\mod{8}$ or not.
Therefore, when $v\ge2$,
\[
\J(v) = \begin{cases}
		\frac{1}{4} \left(\frac{8}{2-0}+\frac{8}{2-0}\right) =  2
			&\text{if}\ v=2\ \text{and}\ k\equiv 1\mod 8,\\
		\frac{1}{4} \left( \frac{8}{2-1} + \frac{8}{2-(-1)}\right) =  \frac{8}{3}
			&\text{if}\ v=2\ \text{and}\ k\equiv 5\mod 8,\\
		\frac{1}{4} \left(\frac{8}{2-0}+\frac{8}{2-1} + \frac{8}{2-0}+ \frac{8}{2-(-1)}\right) =  \frac{14}{3}
			&\text{if}\ v\ge3\ \text{and}\ k\equiv 1\mod 8,\\
		0 &\text{otherwise} .
	\end{cases}
\]
Finally, we consider the case $v=1$.
Using  Lemma~\ref{generic quad lemma - prime 2} again, we have
\[
|J_r(1)| = 2\cdot \#\{ j\mod{8}: j^2\equiv k+r\mod 8 \}
	= \begin{cases}
		4 &\text{if}\ k+r\equiv 0,4\mod 8,\\
		8 &\text{if}\ k+r\equiv 1\mod 8,\\
		0 &\text{otherwise}.
	\end{cases}
\]
Therefore,
\[
\J(1) = \begin{cases}
		\frac{1}{4} \cdot \frac{8}{2-0} = 1
			&\text{if}\ k\equiv 1,5\mod 8,\\
		\frac{1}{4} \left( \frac{4}{2-1}  + \frac{4}{2-(-1)} \right) 	= \frac{4}{3}
			&\text{if}\ k\equiv 3,7 \mod 8 ,
	\end{cases}
\]
which completes the proof of the lemma.
\end{proof}

%%%%%%%%%%%%%%%%%%%%%%%%%%%%%%%%%%%%%%%%%%%%%%%%%%%%%%%%%%%%%%%%%%%%%%%%%%%%%%%%%%%%%%%%%%%%%%%%%%%%%%%%%%%%%%%%%%%%%%%%%%%%%%%%%%%%%%%%%%%%%%%%%%%%%%%%%%%%%%%%%%%%%%%%%%%%%%%%%%%%%%%

Lemma \ref{formula for J} now follows as a direct consequence of Lemmas \ref{prime 2 lma1} and \ref{prime 2 lma2}.

%%%%%%%%%%%%%%%%%%%%%%%%%%%%%%%%%%%%%%%%%%%%%%%%%%%%%%%%%%%%%%%%%%%%%%%%%%%%%%%%%%%%%%%%%%%%%%%%%%%%%%%%%%%%%%%%%%%%%%%%%%%%%%%%%%%%%%%%%%%%%%%%%%%%%%%%%%%%%%%%%%%%%%%%%%%%%%%%%%%%%%%%%%%%%%%%%%%%%%%%%%%%%%%%%%%%%%%%%%%%%%%%%%%%%%%%%%%%%%%%%%%%%%%%%%%%%%%%%%%%%%%%%%%%%%%%%%%%%%%%%%%%%%%%%%%%%%%%%%%%%%%%%%%%%%%%%%%%%%%%%%%%%%%%%%%%%%%%%%%%%%%%%%%%%%%%%%%%%%%%%%%%%%%%%%%%%%%%%%%%%%%%%%%%%%%%%%%%%%%%%%%%%%%%%%%%%%%%%%%%%%%%%%%%%%%%%%%%%%%%%%%%%%%%%%%%%%%%%%%%%%%%%%%%%%%%%%%%%%%%%%%%%%%%%%%%%%%%%%%%%%%%%%%%%%%%%%%%%%%%%%%%%%%%%%%%%%%%%%%%%%%%%%%%%%%%%%%%%%

\appendix

\section{by Chantal David, Greg Martin and Ethan Smith}

The purpose of this appendix is to give a probabilistic interpretation to the Euler factors arising in $K(G)\frac{|G|}{|\Aut(G)|}$ and $K(N)\frac{N}{\phi(N)}$,
where $K(G)$ and $K(N)$ are defined by~\eqref{define K(G)} and~\eqref{define K(N)}, respectively.
Given a prime $\ell$, we let $\nu_\ell(\cdot)$ denote the usual $\ell$-adic valuation.
For each integer $e\ge 1$, we also let $\GL_2(\Z/\ell^e\Z)$ denote the usual group of invertible $2\times 2$ matrices with entries from $\Z/\ell^e\Z$.
The $2\times 2$ identity matrix we denote by $I$.
The main results of this appendix are as follows.

%%%%%%%%%%%%%%%%%%%%%%%%%%%%%%%%%%%%%%%%%%%%%%%%

\begin{thm}\label{K(N) interpretation}
For each positive integer $N$,
\begin{equation*}
\frac{K(N)\cdot N}{\phi(N)}
=\prod_\ell\left(\lim_{e\rightarrow\infty}\frac{\ell^e\cdot\#\{\sigma\in\GL_2(\Z/\ell^e\Z) : \det(\sigma)+1-\tr(\sigma)\equiv N\pmod{\ell^e}\}}{\#\GL_2(\Z/\ell^e\Z)}\right),
\end{equation*}
where the product is taken over all primes $\ell$.
Furthermore, the sequences defining the Euler factors are constant for $e>\nu_\ell(N)$.
\end{thm}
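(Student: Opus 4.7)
The starting point is to recognize that $\det(\sigma) + 1 - \tr(\sigma) = \det(I-\sigma)$, so the condition becomes $\det(I-\sigma) \equiv N \pmod{\ell^e}$. My plan is to prove the stability statement first via Hensel-style lifting, which reduces the computation of the limit to a single finite value, and then to evaluate that value by summing over trace and determinant classes.

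For stability at levels $e > \nu_\ell(N)$, I would stratify matrices $\sigma$ according to the largest integer $j \ge 0$ with $\sigma \equiv I \pmod{\ell^j}$, writing $\sigma = I + \ell^j\tau$ for $\tau \in \Mat_2(\Z/\ell^{e-j}\Z)$ primitive (not divisible by $\ell$). Then $\det(I-\sigma) = \ell^{2j}\det(\tau)$, and the congruence $\det(I-\sigma)\equiv N\pmod{\ell^e}$ forces $2j \le \nu_\ell(N)$, with the reduced condition $\det(\tau) \equiv N/\ell^{2j} \pmod{\ell^{e-2j}}$. Since $\tau$ is primitive, $\mathrm{adj}(\tau) \not\equiv 0 \pmod\ell$, so the differential of $\det$ at $\tau$, namely $\xi \mapsto \tr(\mathrm{adj}(\tau)\xi)$, is surjective modulo $\ell$; Hensel's lemma then gives exactly $\ell^3$ lifts at each level in each stratum. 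Combined with $|\GL_2(\Z/\ell^{e+1}\Z)| = \ell^4|\GL_2(\Z/\ell^e\Z)|$ and the prefactor $\ell^{e+1}/\ell^e$, the ratio stabilizes once $e > \nu_\ell(N)$.

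To evaluate the limit, I would compute at $e = \nu_\ell(N)+1$. For $\ell \nmid N$ (so $e=1$), reorganize the count as a sum over $(t,d) \in \F_\ell \times \F_\ell^\times$ with $t = 1 + d - N$: a standard conjugacy-class analysis gives that the number of $\sigma \in \GL_2(\F_\ell)$ with trace $t$ and determinant $d$ is $\ell^2 + \leg{\Delta}{\ell}\ell$ when $\ell\nmid\Delta$ and $\ell^2$ otherwise, where $\Delta = t^2-4d = (d-(N+1))^2-4N$. A Jacobsthal-type evaluation yields $\sum_{d\in\F_\ell^\times}\leg{(d-(N+1))^2-4N}{\ell} = -1 - \leg{N-1}{\ell}^2$, and assembling produces the target Euler factor $1 - \frac{\leg{N-1}{\ell}^2\ell+1}{(\ell-1)^2(\ell+1)}$.

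For $\ell \mid N$ with $v := \nu_\ell(N)$, I would use the same stratification at $e = v+1$. The stratum $j = 0$ contributes a term computed analogously to the $\ell \nmid N$ case but with modifications accounting for $\ell \mid N$, while for each $j = 1, \ldots, \lfloor v/2 \rfloor$ the contribution comes from counting primitive $\tau \in \Mat_2(\Z/\ell^{e-j}\Z)$ with $\det(\tau) \equiv N/\ell^{2j} \pmod{\ell^{e-2j}}$. The latter count can be evaluated via inclusion-exclusion on the divisibility of the entries of $\tau$ by $\ell$, and summing geometrically over $j$ should telescope to $\frac{\ell}{\ell-1}\left(1-\frac{1}{\ell^v(\ell-1)}\right)$. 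The main obstacle will be the bookkeeping for this telescoping sum, in particular verifying that the boundary strata (the $j=0$ term, and when $v$ is even the $j = v/2$ term involving a restricted determinant count) combine correctly into the closed-form Euler factor.
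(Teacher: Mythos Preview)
Your approach is correct and takes a somewhat different, more conceptual route than the paper's. The paper never exploits the identity $\det(\sigma)+1-\tr(\sigma)=\det(I-\sigma)$; instead it works directly with the entries $(a,b,c,d)$ of $\sigma$. For $\ell\nmid N$ the paper counts solutions by elementary enumeration over quadruples (Lemma~\ref{matrix count mod ell}), whereas you sum over pairs $(\tr,\det)$ using conjugacy-class sizes together with a Jacobsthal sum; both yield $\ell(\ell^2-\ell-1-\leg{N-1}{\ell}^2)$. For $\ell\mid N$ the paper splits only into ``$\sigma\equiv I\pmod\ell$'' versus ``not'' (Proposition~\ref{ell divides N but not n}) and handles the former via a separate recursive count of matrices of prescribed determinant (Proposition~\ref{det prop}, Lemma following it); your depth-$j$ stratification unwinds that recursion explicitly, and your observation that $\det$ is submersive on primitive $\tau$ replaces the recursion with a direct Hensel argument. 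Your route is cleaner and more uniform; the paper's is more hands-on but avoids having to track the interplay between the two different moduli $\ell^{e-j}$ and $\ell^{e-2j}$.

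One cautionary note on that interplay: for strata with $j\ge1$, the literal claim ``exactly $\ell^3$ lifts of each solution from level $e$ to level $e+1$'' is not correct, because a perturbation $\tau\mapsto\tau+\ell^{e-j}\xi$ does not change $\det(\tau)\bmod\ell^{e+1-2j}$ when $j\ge1$. What does work is to apply Hensel to the map $\det$ from primitive matrices in $\Mat_2(\Z/\ell^{e-j}\Z)$ to $\Z/\ell^{e-j}\Z$ itself: each fiber then has size $\ell^{3(e-j-1)}$ times its size over $\F_\ell$, and summing over the $\ell^j$ residues in $\Z/\ell^{e-j}\Z$ lifting $N/\ell^{2j}\bmod\ell^{e-2j}$ gives a stratum count of the form $\ell^{3e-2j-3}\cdot(\text{constant in }e)$. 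This still yields the stability you claim, so the strategy goes through once this step is phrased correctly.
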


\begin{rmk}
If $\mu$ denotes the Haar measure on the space of $2\times 2$ matrices over the $\ell$-adic integers $\Z_\ell$, normalized so that $\mu\left(\GL_2(\Z_\ell)\right)=1$,
then the Euler factor of $K(N)\frac{N}{\phi(N)}$ for the prime $\ell$ may be viewed as the density function for the probability measure on $\Z_\ell$
defined by the pushforward of $\mu$ via the map $\det+1-\tr:\GL_2(\Z_\ell)\rightarrow\Z_\ell$.
\end{rmk}

%%%%%%%%%%%%%%%%%%%%%%%%%%%%%%%%%%%%%%%%%%%%%%%%

\begin{thm}\label{K(G) interpretation}
For each pair of positive integers $m$ and $k$, put $G=G_{m,k}=\Z/m\Z\times\Z/mk\Z$.
Then
\begin{equation*}
\frac{K(G)\cdot |G|}{|\Aut(G)|}
=\prod_\ell\left(\lim_{e\rightarrow\infty}\frac{\ell^e\cdot\#\left\{\sigma\in\GL_2(\Z/\ell^e\Z) :
	\begin{array}{l}\det(\sigma)+1-\tr(\sigma)\equiv |G|\pmod{\ell^e},\\
	\sigma\equiv I\pmod{\ell^{\nu_\ell(m)}},\\
	\sigma\not\equiv I\pmod{\ell^{\nu_\ell(m)+1}}
	\end{array}\right\}}
	{\#\GL_2(\Z/\ell^e\Z)}\right),
\end{equation*}
where the product is taken over all primes $\ell$.
Furthermore, the sequences defining the Euler factors are constant for $e>\nu_\ell(|G|)$.
\end{thm}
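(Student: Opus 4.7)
The plan is to compute the Euler factor at each prime $\ell$ directly and match it to the $\ell$-part of $K(G)\cdot|G|/|\Aut(G)|$, which is obtainable from~\eqref{define K(G)} and Lemma~\ref{Aut(G)}. Fix $\ell$ and set $v_m = \nu_\ell(m)$, $v_k = \nu_\ell(k)$, so that $\nu_\ell(|G|) = 2v_m + v_k$. The crucial starting identity is $\det\sigma+1-\tr\sigma = \det(\sigma - I)$. Under the substitution $\sigma = I + \ell^{v_m}\tau$ with $\tau\in\Mat_2(\Z/\ell^{e-v_m}\Z)$, the conditions in the theorem translate to: (a) $\tau\not\equiv 0\pmod\ell$; (b) $\ell^{2v_m}\det\tau\equiv|G|\pmod{\ell^e}$; and (c) $\det(I + \ell^{v_m}\tau)\in(\Z/\ell^e\Z)^\times$. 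Condition (c) is automatic when $v_m\ge 1$, since then $\det(I+\ell^{v_m}\tau)\equiv 1\pmod\ell$; when $v_m = 0$ it reads $1 + \tr\tau + \det\tau \not\equiv 0\pmod\ell$.

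The key technical ingredient is the counting formula
\[
\#\{\tau \in \Mat_2(\Z/\ell^n\Z) : \det\tau = \widetilde M\} = \ell^{3n-j-2}(\ell+1)(\ell^{j+1}-1),
\]
valid for $\widetilde M\in\Z/\ell^n\Z$ with $j := \nu_\ell(\widetilde M) < n$, which I will establish by lifting the reduction $\bar\tau\in\Mat_2(\F_\ell)$: when $j = 0$ this is the standard $|\SL_2(\Z/\ell^n\Z)|$ count, and when $j\ge 1$ the reduction $\bar\tau$ must be non-zero singular (rank $1$), so the identity $\det(\widetilde\tau+\ell\xi) = \det\widetilde\tau + \ell\,\tr(\mathrm{adj}(\widetilde\tau)\xi) + \ell^2\det\xi$ combined with the surjectivity of the linearized determinant $\xi\mapsto\tr(\mathrm{adj}(\widetilde\tau)\xi)$ (valid because $\mathrm{adj}$ of a rank-$1$ matrix over $\F_\ell$ is nonzero) yields the formula by induction. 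A short inclusion-exclusion removes the $\tau\equiv 0\pmod\ell$ contribution when needed, and once $e > \nu_\ell(|G|)$ the resulting count grows exactly like $\ell^{4e-3}$ times an $e$-independent factor; dividing by $|\GL_2(\Z/\ell^e\Z)| = \ell^{4e-3}(\ell-1)(\ell^2-1)$ and multiplying by $\ell^e$ therefore yields a ratio independent of $e$, establishing the stability claim.

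The computation then splits into a four-way case analysis. When $v_m\ge 1$ and $v_k = 0$, condition (b) becomes $\det\tau\equiv|G|/\ell^{2v_m}\pmod{\ell^{e-2v_m}}$ with a unit right-hand side, yielding density $\tfrac{1}{\ell^{2v_m-1}(\ell-1)}$. When $v_m\ge 1$ and $v_k\ge 1$, an analogous computation (now with $\det\tau$ of valuation $v_k$) gives $\tfrac{\ell+1}{\ell^{2v_m}(\ell-1)}$. When $v_m=0$ and $v_k\ge 1$, the rank-$1$ reduction $\bar\tau$ must additionally satisfy $\tr\bar\tau\not\equiv -1\pmod\ell$; a direct parametrization shows that there are $\ell^2-1$ non-zero singular matrices in $\Mat_2(\F_\ell)$ with trace $0$ and $\ell^2+\ell$ with each nonzero trace, so the count with the trace restriction equals $(\ell+1)(\ell^2-\ell-1)$ and the density is $\tfrac{\ell^2-\ell-1}{(\ell-1)^2}$. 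Each of these three values matches the corresponding $\ell$-part of $K(G)\cdot|G|/|\Aut(G)|$ extracted from~\eqref{define K(G)} and Lemma~\ref{Aut(G)}.

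The main obstacle is the remaining case $\ell\nmid|G|$ (so $v_m=v_k=0$), where $\sigma-I\in\GL_2(\Z/\ell^e\Z)$ has unit determinant $|G|$ and the condition that $\sigma$ itself be invertible reads $\tr(\sigma-I)\not\equiv -1 - |G|\pmod\ell$. Here I parametrize matrices $\bar\tau\in\GL_2(\F_\ell)$ by $\bar\tau = \bigl(\begin{smallmatrix} a & b\\ c& d\end{smallmatrix}\bigr)$ with $a + d = t$ (a given trace) and $ad - bc = |G|$, reducing to counting triples $(a,b,c)$ with $bc = a(t-a) - |G|$. The count depends only on the number of $a\in\F_\ell$ for which $a(t-a)-|G| = 0$, i.e., on the number of roots of the quadratic $a^2 - at + |G|$, which is governed by the discriminant $t^2 - 4|G|$. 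At the value $t = -1 - |G|$ this discriminant equals $(|G| - 1)^2$, which vanishes in $\F_\ell$ precisely when $\ell\mid |G|-1$ and is otherwise a non-zero square. This dichotomy produces exactly the factor $\leg{|G|-1}{\ell}^2$ that appears in~\eqref{define K(G)}, and the resulting density matches the $\ell$-part $1 - \tfrac{\leg{|G|-1}{\ell}^2\ell+1}{(\ell-1)^2(\ell+1)}$. Combining the four cases with the stability observation completes the proof.
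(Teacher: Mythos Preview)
Your proposal is correct and rests on the same two ingredients as the paper's proof: the determinant count for $\Mat_2(\Z/\ell^n\Z)$ (Proposition~A.6 there) and a case analysis in $(\nu_\ell(m),\nu_\ell(k))$ followed by comparison with~\eqref{define K(G)} and Lemma~\ref{Aut(G)}. The organizational difference is that you exploit the identity $\det\sigma+1-\tr\sigma=\det(\sigma-I)$ explicitly and substitute $\sigma=I+\ell^{\nu_\ell(m)}\tau$ from the outset, thereby building in the congruence condition $\sigma\equiv I\pmod{\ell^{\nu_\ell(m)}}$ and reducing everything to a determinant condition on $\tau$; the paper instead computes the auxiliary quantity $C_{N,n}(\ell^e)$ (matrices with $\sigma\equiv I\pmod{\ell^{\nu_\ell(n)}}$ but no ``not congruent'' clause) for general $n$ and recovers the exact-level condition by the subtraction $v_\ell(N,m)-v_\ell(N,\ell m)$. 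Your route is slightly more direct for this theorem, while the paper's route has the side benefit that the $u=0$ case of its Theorem~A.9 immediately yields Theorem~A.1 as well. One small imprecision: in your inductive sketch of the determinant count you write that for $j\ge 1$ the reduction $\bar\tau$ ``must be'' rank~$1$, but of course $\bar\tau=0$ is allowed when $j\ge 2$; you already account for this with your ``short inclusion--exclusion,'' which is exactly the paper's recursion $f(r,s)=\ell^{3(r+s-1)}(\ell+1)(\ell^2-1)+\ell^4 f(r-2,s)$.
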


%%%%%%%%%%%%%%%%%%%%%%%%%%%%%%%%%%%%%%%%%%%%%%%%

For the remainder of this appendix, we assume that $e, n, N,$ and $\ell$ are positive integers with $\ell$ prime and $n^2\mid N$.
Later we will also assume that $N=|G|=m^2k$.
For convenience, we let
\begin{equation*}
C_{N,n}(\ell^e)=\left\{\sigma\in\GL_2(\Z/\ell^e\Z) :
	\det(\sigma)+1-\tr(\sigma)\equiv N\pmod{\ell^e},\ \sigma\equiv I\pmod{\ell^{\nu_\ell(n)}}\right\}.
\end{equation*}
In the case that $\ell\nmid n$, we note that the condition $\sigma\equiv I\pmod{\ell^{\nu_\ell(n)}}$ is vacuous.
As usual, $\leg{\cdot}{\ell}$ denotes the Kronecker symbol modulo $\ell$.

%%%%%%%%%%%%%%%%%%%%%%%%%%%%%%%%%%%%%%%%%%%%%%%%%%%%%%%%%%%%

\begin{lma}\label{matrix count mod ell}
If $\ell\nmid n$, then
\begin{equation*}
\#C_{N,n}(\ell)=
\ell\left(\ell^2-\leg{N}{\ell}^2\ell-1-\leg{N-1}{\ell}^2\right).
\end{equation*}
\end{lma}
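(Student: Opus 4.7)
Since $\ell\nmid n$, the condition $\sigma\equiv I\pmod{\ell^{\nu_\ell(n)}}$ is vacuous, so $C_{N,n}(\ell)$ is simply the set of $\sigma\in\GL_2(\F_\ell)$ with $\det(\sigma)+1-\tr(\sigma)\equiv N\pmod\ell$. Note that $\det(\sigma)+1-\tr(\sigma)$ is the value at $x=1$ of the characteristic polynomial of $\sigma$. The plan is to stratify by trace and determinant: for each pair $(t,d)\in\F_\ell\times\F_\ell^*$ count matrices in $M_2(\F_\ell)$ with that trace and determinant, then sum over the one-parameter family $(t,d)$ cut out by the constraint $t=d+1-N$.

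The key input is the formula
\begin{equation*}
M(t,d):=\#\{\sigma\in M_2(\F_\ell):\tr(\sigma)=t,\ \det(\sigma)=d\}=\ell^2+\ell\leg{t^2-4d}{\ell} \qquad(d\in\F_\ell^*),
\end{equation*}
which follows for odd $\ell$ from the standard classification of conjugacy classes in $\GL_2(\F_\ell)$: a split semisimple class contributes $\ell(\ell+1)$ matrices, an elliptic class contributes $\ell(\ell-1)$, and a class with repeated eigenvalue contributes $\ell^2$ in total (one scalar matrix together with the Jordan class of size $\ell^2-1$). For $\ell=2$ the same identity can be verified by direct enumeration of $\GL_2(\F_2)$ with the Kronecker convention.

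Substituting $t=d+1-N$ gives $t^2-4d=(d-N-1)^2-4N$, and therefore
\begin{equation*}
\#C_{N,n}(\ell)=(\ell-1)\ell^2+\ell\sum_{d\in\F_\ell^*}\leg{(d-N-1)^2-4N}{\ell}.
\end{equation*}
Shifting $e=d-N-1$ and adding back the missing term $d=0$, the sum becomes
\begin{equation*}
\sum_{e\in\F_\ell}\leg{e^2-4N}{\ell}\;-\;\leg{(N+1)^2-4N}{\ell}=\sum_{e\in\F_\ell}\leg{e^2-4N}{\ell}\;-\;\leg{N-1}{\ell}^2.
\end{equation*}
The Jacobsthal-type identity $\sum_{e\in\F_\ell}\leg{e^2+a}{\ell}=-1$ when $\ell\nmid a$, combined with $\sum_{e\in\F_\ell}\leg{e^2}{\ell}=\ell-1$ in the degenerate case $\ell\mid a$, collapses the first sum to $\ell-1-\ell\leg{N}{\ell}^2$. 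Plugging this in and simplifying produces the stated expression.

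The only delicate point is $\ell=2$, where the general conjugacy class count and the standard Gauss-sum identity both need separate treatment. Both can be handled by brute computation: $\GL_2(\F_2)$ has only six elements, and the six values of $e^2-4N\pmod 8$ for $e\in\{0,1\}$ (depending on the parity of $N$) match the Kronecker symbol formulas exactly, so the final answer continues to hold uniformly.
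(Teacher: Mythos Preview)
Your argument is correct and gives a genuinely different proof from the paper's. The paper works directly with the entries: it rewrites the constraint as $(a-1)(d-1)-bc\equiv N\pmod\ell$, counts all quadruples $(a,b,c,d)$ satisfying this, and then subtracts off those with $ad-bc\equiv 0$; each count is a short elementary case analysis. Your route is more structural: you parametrize by $(\tr\sigma,\det\sigma)$, invoke the conjugacy-class formula $M(t,d)=\ell^2+\ell\leg{t^2-4d}{\ell}$ for $d\in\F_\ell^*$, and then evaluate the resulting one-variable character sum via the Jacobsthal identity $\sum_e\leg{e^2+a}{\ell}=-1$ for $\ell\nmid a$. Both approaches are short; the paper's is more self-contained (no external input beyond counting solutions to linear congruences), while yours explains conceptually why the Legendre symbols $\leg{N}{\ell}^2$ and $\leg{N-1}{\ell}^2$ appear --- they come from the degenerate loci $d=0$ and $d=(N+1)^2/4$ of the discriminant $(d-N-1)^2-4N$.

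One small point: your closing sentence about $\ell=2$ is garbled (``the six values of $e^2-4N\pmod 8$ for $e\in\{0,1\}$'' doesn't parse). Since the whole apparatus of the $M(t,d)$ formula and the Jacobsthal sum genuinely breaks down in characteristic $2$, it would be cleaner to simply list the six elements of $\GL_2(\F_2)$, note that four have trace $0$ and two have trace $1$, and check that $2(4-2\leg{N}{2}^2-1-\leg{N-1}{2}^2)$ returns $4$ or $2$ according to the parity of $N$.
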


\begin{proof}
We first observe that $\#C_{N,n}(\ell)$ is equal to the number of quadruples $(a,b,c,d)$ satisfying $0\le a,b,c,d<\ell$ and
\begin{align}
ad-bc+1-(a+d)&\equiv N\pmod\ell,\label{det-tr cond}\\
ad-bc&\not\equiv 0\pmod\ell\label{det cond}.
\end{align}
The lemma follows by first counting the number of quadruples satisfying~\eqref{det-tr cond}
and then removing the number of quadruples satisfying~\eqref{det-tr cond} that do not satisfy~\eqref{det cond}.

Rearranging, we see that the condition~\eqref{det-tr cond} may be rewritten as
\begin{equation*}
(a-1)(d-1)-bc\equiv N\pmod\ell.
\end{equation*}
It is clear that any choice of $a,b,c$ with $a\ne 1$ uniquely determines $d$.
On the other hand, if $a=1$, then there are $\ell$ choices for $d$, and the pair $(b,c)$ must satisfy $bc\equiv -N\pmod\ell$.
Therefore, there are
\begin{equation*}
\ell^3+\left(1-\leg{N}{\ell}^2\right)\ell^2-\ell
\end{equation*}
solutions $(a,b,c,d)$ to~\eqref{det-tr cond} with $0\le a,b,c,d<\ell$.

We now count the number of quadruples $(a,b,c,d)$ with $0\le a,b,c,d<\ell$
for which~\eqref{det-tr cond} holds but~\eqref{det cond} does not.
These are the quadruples that satisfy the system
\begin{align*}
a+d&\equiv 1-N\pmod\ell,\\
ad&\equiv bc\pmod\ell.
\end{align*}
It is clear that any choice of $a$ uniquely determines $d$.
If $a=0$ or $a=1-N$, then there are $2\ell-1$ choices for the pair $(b,c)$.
On the other hand, if $a\ne 0,1-N$, there are only $\ell-1$ choices for $(b,c)$.
Therefore, there are
\begin{equation*}
\ell^2+\leg{N-1}{\ell}^2\ell
\end{equation*}
solutions $(a,b,c,d)$ to~\eqref{det-tr cond} with $0\le a,b,c,d<\ell$ for which~\eqref{det cond} does not hold.
\end{proof}

%%%%%%%%%%%%%%%%%%%%%%%%%%%%%%%%%%%%%%%%%%%%%%%%%%%%%%%%%%%%

\begin{prop}\label{matrix proportions for ell not dividing N}
If $\ell\nmid N$, then
\begin{equation*}
\#C_{N,n}(\ell^e)=\ell^{3(e-1)+1}\left(\ell^2-\ell-1-\leg{N-1}{\ell}^2\right)
\end{equation*}
for every $e\ge 1$.
\end{prop}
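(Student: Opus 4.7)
My plan is to prove the proposition by Hensel-style lifting from $e-1$ to $e$, using Lemma~\ref{matrix count mod ell} as the base case. Since $\ell\nmid N$ and $n^2\mid N$, we have $\ell\nmid n$, so the condition $\sigma\equiv I\pmod{\ell^{\nu_\ell(n)}}$ is vacuous throughout. The base case $e=1$ follows immediately from Lemma~\ref{matrix count mod ell} upon noting that $\leg{N}{\ell}^2=1$ in the present setting.

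For the inductive step with $e\ge 2$, I would fix a solution $\sigma_0\in C_{N,n}(\ell^{e-1})$ and parametrize its lifts to $\Z/\ell^e\Z$ by writing $\sigma=\sigma_0+\ell^{e-1}\tau$ where $\tau\in\Mat_2(\Z/\ell\Z)$. Writing $\sigma_0=\bigl(\begin{smallmatrix}a_0 & b_0\\ c_0 & d_0\end{smallmatrix}\bigr)$ and $\tau=\bigl(\begin{smallmatrix}a_1 & b_1\\ c_1 & d_1\end{smallmatrix}\bigr)$, a direct expansion gives
\[
\det(\sigma)+1-\tr(\sigma) \equiv \det(\sigma_0)+1-\tr(\sigma_0) + \ell^{e-1}L(\tau) \pmod{\ell^e},
\]
since the quadratic-in-$\tau$ term carries a factor $\ell^{2(e-1)}\equiv 0\pmod{\ell^e}$, where
\[
L(\tau) = a_1(d_0-1) + d_1(a_0-1) - b_0 c_1 - c_0 b_1.
\]
Also, $\GL_2$-membership is automatic: if $\det(\sigma_0)\in(\Z/\ell^{e-1}\Z)^\times$, then any lift $\sigma$ has $\det(\sigma)\in(\Z/\ell^e\Z)^\times$, since being a unit is detected mod $\ell$.

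Writing $\det(\sigma_0)+1-\tr(\sigma_0)-N=\ell^{e-1}\alpha$ for some integer $\alpha$, the condition on $\tau$ becomes the linear congruence $L(\tau)\equiv -\alpha\pmod \ell$ in the four unknowns $a_1,b_1,c_1,d_1$. The key observation is that $L$ is the zero linear form mod $\ell$ precisely when $\sigma_0\equiv I\pmod{\ell}$; but this would force $\det(\sigma_0)+1-\tr(\sigma_0)\equiv 0\pmod\ell$, contradicting $\det(\sigma_0)+1-\tr(\sigma_0)\equiv N\not\equiv 0\pmod\ell$. Hence $L$ is a nonzero linear form in $(\Z/\ell\Z)^4$, and the congruence $L(\tau)\equiv-\alpha\pmod\ell$ has exactly $\ell^3$ solutions in $\tau$. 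Summing over $\sigma_0$, this yields the recursion
\[
\#C_{N,n}(\ell^e) = \ell^3\,\#C_{N,n}(\ell^{e-1}) \qquad (e\ge 2),
\]
and a straightforward induction using the base case completes the proof.

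The main conceptual point (the only place where the argument could fail) is verifying that the degenerate case $\sigma_0\equiv I\pmod\ell$ does not occur; this is precisely where the hypothesis $\ell\nmid N$ is used. Everything else is a routine Hensel-style counting argument.
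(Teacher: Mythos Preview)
Your proof is correct and follows essentially the same approach as the paper's: both arguments lift solutions using the linear form $a_1(d_0-1)+d_1(a_0-1)-b_0c_1-c_0b_1$ and use the hypothesis $\ell\nmid N$ to rule out the degenerate case $\sigma_0\equiv I\pmod{\ell}$. The only cosmetic difference is that you lift one level at a time (from $\ell^{e-1}$ to $\ell^e$), which makes the quadratic term vanish outright, whereas the paper lifts directly from $\ell$ to $\ell^e$ and absorbs the surviving quadratic term into the unique solvability for one of the variables.
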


\begin{proof}
The case $e=1$ is treated in Lemma~\ref{matrix count mod ell}, and so we assume that $e\ge 2$.
Since any $\sigma\in C_{N,n}(\ell^e)$ must reduce modulo $\ell$ to a matrix in $C_{N,n}(\ell)$,
it suffices to count the number of matrices in $C_{N,n}(\ell^e)$ that reduce to a given matrix in $C_{N,n}(\ell)$.
To this end, we assume that $\sigma_0\in C_{N,n}(\ell)$ and $\sigma\in C_{N,n}(\ell^e)$ is such that $\sigma\equiv\sigma_0\pmod\ell$.
Thus, we may write
\begin{equation*}
\sigma_0=\begin{pmatrix}a_0&b_0\\ c_0&d_0\end{pmatrix}\quad\text{and}\quad
\sigma=\begin{pmatrix}a_0+a\ell&b_0+b\ell\\ c_0+c\ell&d_0+d\ell\end{pmatrix}
\end{equation*}
with $0\le a_0,b_0,c_0,d_0<\ell$ and $0\le a,b,c,d<\ell^{e-1}$.
Note that the condition $\det\sigma\not\equiv 0\pmod\ell$ is necessarily satisfied since $\det\sigma\equiv\det\sigma_0\pmod\ell$ and
$\sigma_0\in C_{N,n}(\ell)$.
Therefore, $\sigma\in C_{N,n}(\ell^e)$ if and only if
\begin{equation}\label{lift det-tr cond}
a_0d_0-b_0c_0+1-a_0-d_0
+(a(d_0-1)+d(a_0-1)-b_0c-bc_0)\ell
+(ad-bc)\ell^2
\equiv N\pmod{\ell^e}.
\end{equation}
Since $\sigma_0\in C_{N,n}(\ell)$, it follows that $a_0d_0-b_0c_0+1-a_0-d_0=N+k_0\ell$ for some $k_0$,
and hence condition~\eqref{lift det-tr cond} reduces to
\begin{equation*}
k_0+
((d_0-1)a-c_0b-b_0c+(a_0-1)d)
+(ad-bc)\ell
\equiv 0\pmod{\ell^{e-1}}.
\end{equation*}
Since $\ell\nmid N$, $\sigma_0$ cannot be the identity matrix modulo $\ell$, 
and the polynomial $(d_0-1)a - c_0 b - b_0 c + (a_0-1) d$ in the variables $a,b,c,d$ has at least one nonzero coefficient. 
Say for example that $d_0 - 1$ is not zero.
Then for each triple $(b,c,d)$, there is a unique choice of $a$ satisfying the above congruence.
Therefore, there are exactly $\ell^{3(e-1)}$ solutions $(a,b,c,d)$ with $0\le a,b,c,d<\ell^{e-1}$.
\end{proof}

%%%%%%%%%%%%%%%%%%%%%%%%%%%%%%%%%%%%%%%%%%%%%%%%%%%%%%%%%%%%

Let $\Mat_2(\Z/\ell^k\Z)$ denote the ring of $2\times 2$ matrices with entries from $\Z/\ell^k\Z$.
In order to compute $C_{N,n}(\ell^e)$ when $\ell\mid N$ we need to know the number of matrices in $\Mat_2(\Z/\ell^k\Z)$ of every individual determinant.

\begin{prop}\label{det prop}
Let $M$ be a positive integer, and let $r=\nu_\ell(M)$.  Then for $r,s\ge 0$, we have
\begin{equation*}
\#\left\{\sigma\in\Mat_2(\Z/\ell^{r+s}\Z) : \det(\sigma)\equiv M\pmod{\ell^{r+s}}\right\}
=\ell^{2(r-1)}\left(\ell^{3s}(\ell+1)(\ell^{r+1}-1)+\delta(s)\right),
\end{equation*}
where $\delta(s)$ is defined by
\begin{equation*}
\delta(s):=\begin{cases}
1&\text{if }s=0,\\
0&\text{otherwise}.
\end{cases}
\end{equation*}
\end{prop}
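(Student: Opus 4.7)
The strategy is to stratify $\sigma = \begin{pmatrix} a & b \\ c & d \end{pmatrix} \in \Mat_2(\Z/\ell^{r+s}\Z)$ by the parameter $t := \nu_\ell(\gcd(a,b)) \in \{0, 1, \ldots, r+s\}$, with the convention that $t = r+s$ means $a = b = 0$. For each stratum I will count first the valid top rows $(a,b)$ and then, conditionally, the valid bottom rows $(c,d)$ satisfying the determinant condition.

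For the generic strata $0 \le t < r+s$, I write $a = \ell^t a_0$, $b = \ell^t b_0$ with $(a_0, b_0) \in (\Z/\ell^{r+s-t}\Z)^2$ having at least one unit coordinate. A one-line inclusion-exclusion gives $\ell^{2(r+s-t-1)}(\ell^2 - 1)$ such pairs. The condition $ad - bc \equiv M \pmod{\ell^{r+s}}$ then rewrites as
\[
a_0 d - b_0 c \equiv \ell^{-t}M \pmod{\ell^{r+s-t}},
\]
which is solvable if and only if $t \le r$. When $t \le r$, since $\gcd(a_0, b_0, \ell) = 1$, the linear functional $(c,d) \mapsto a_0 d - b_0 c$ is surjective onto $\Z/\ell^{r+s-t}\Z$, contributing $\ell^{r+s-t}$ pairs mod $\ell^{r+s-t}$, and hence $\ell^{r+s+t}$ pairs mod $\ell^{r+s}$ after lifting each coordinate by a factor of $\ell^t$. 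Multiplying the two counts, the contribution of stratum $t$ is $\ell^{3(r+s)-t-2}(\ell^2-1)$.

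The extreme stratum $t = r+s$ corresponds to $a = b = 0$, in which case $\det \sigma = 0$ automatically, so it contributes $\ell^{2(r+s)}$ matrices precisely when $M \equiv 0 \pmod{\ell^{r+s}}$, i.e.\ when $s = 0$, and nothing otherwise. This is exactly the source of the $\delta(s)$ term.

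Finally I sum the contributions as a geometric series in $\ell^{-t}$. When $s \ge 1$, the strata $t = 0, 1, \ldots, r$ all satisfy $t < r+s$, and applying $\sum_{t=0}^{r} \ell^{-t} = (\ell^{r+1}-1)/(\ell^r(\ell-1))$ immediately produces $\ell^{2(r-1)}\ell^{3s}(\ell+1)(\ell^{r+1}-1)$. When $s = 0$, the constraint $t < r+s = r$ truncates the generic-stratum sum to $t = 0, \ldots, r-1$, and one must then add the separate $\ell^{2r}$ contribution from the all-zero stratum; a short algebraic simplification verifies the result equals $\ell^{2(r-1)}((\ell+1)(\ell^{r+1}-1) + 1)$, as claimed. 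The main obstacle is the bookkeeping at this transition: the upper index of summation drops from $r$ to $r-1$ between the cases $s \ge 1$ and $s = 0$, and it is the interaction of this missing top term with the all-zero stratum that produces the $\delta(s)$ correction.
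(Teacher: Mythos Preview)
Your proof is correct, but it takes a genuinely different route from the paper's. The paper first reduces to $M=\ell^r$ by twisting with a $\GL_2$ element, sets $f(r,s)$ for this count, and then establishes a recursion in $r$ by reducing $\sigma$ modulo $\ell$: if the reduction $\sigma_0$ is nonzero there are exactly $\ell^{3(r+s-1)}$ lifts, while if $\sigma_0=0$ one is led to the count $f(r-2,s)$; together with the base cases $f(0,s)$ and $f(1,s)$ this gives the closed form by induction. Your argument instead stratifies by $t=\nu_\ell(\gcd(a,b))$ and counts each stratum directly, then sums a geometric series.

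Your approach is more direct and avoids the two-step recursion entirely; in particular it makes transparent exactly where the $\delta(s)$ correction comes from (the single stratum $a=b=0$). The paper's recursive approach, on the other hand, is methodologically uniform with the lifting arguments used in the surrounding Propositions of the appendix, where one repeatedly fixes a reduction mod $\ell$ and counts lifts. Both arguments are short; yours has the advantage of producing the closed form in one pass rather than via induction.
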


For the proof of Proposition~\ref{det prop}, we first make a simple reduction and fix some notation.
Given any positive integer $M$, we write $M=\ell^rM'$ with $r=\nu_\ell(M)$ and $(M',\ell)=1$.
Since the determinant maps $\GL_2(\Z/\ell^{r+s}\Z)$ onto $(\Z/\ell^{r+s}\Z)^*$,
it follows that there is an $\alpha\in\GL_2(\Z/\ell^{r+s}\Z)$ such that $\det(\alpha)\equiv M'\pmod{\ell^{r+s}}$.
Since the map $\sigma\mapsto\alpha\sigma$ is a group automorphism of $\Mat_2(\Z/\ell^{r+s}\Z)$
and since $\det(\sigma)=M=\ell^rM'$ if and only if $\det(\alpha^{-1}\sigma)=\ell^r$,
it follows that
\begin{equation*}
\#\left\{\sigma\in\Mat_2(\Z/\ell^{r+s}\Z) : \det(\sigma)\equiv M\pmod{\ell^{r+s}}\right\}
=\#F(r,s),
\end{equation*}
where
\begin{equation*}
F(r,s):=\left\{\sigma\in\Mat_2(\Z/\ell^{r+s}\Z) : \det(\sigma)\equiv \ell^r\pmod{\ell^{r+s}}\right\}.
\end{equation*}
Thus, we see that $\#\left\{\sigma\in\Mat_2(\Z/\ell^{r+s}\Z) : \det(\sigma)\equiv M\pmod{\ell^{r+s}}\right\}$ depends on the power of $\ell$ dividing $M$ and not on the $\ell$-free part of $M$.
With this in mind, we define
\begin{equation*}
f(r,s):=\#F(r,s),
\end{equation*}
where we adopt the natural convention that $f(0,0)=1$.
Proposition~\ref{det prop} then follows easily by induction on $r$ using the following lemma.

\begin{lma}
For every $s\ge 0$, we have
\begin{align*}
f(0,s)&=\ell^{3s-2}(\ell^2-1)+\ell^{-2}\delta(s),\\
f(1,s)&=\ell^{3s}(\ell+1)(\ell^2-1)+\delta(s),\\
f(r,s)&=\ell^{3(r+s-1)}(\ell+1)(\ell^2-1)+\ell^4f(r-2,s),\quad r\ge 2.
\end{align*}
\end{lma}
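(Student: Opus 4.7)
The plan is to establish the three formulas by direct computation of the base cases together with a single stratification of $F(r,s)$ by the mod-$\ell$ reduction $\bar\sigma\in\Mat_2(\F_\ell)$. The central identity underlying everything is
\[
\det(\sigma_0+\ell\tau)=\det\sigma_0+\ell\bigl(a_0d+d_0a-b_0c-c_0b\bigr)+\ell^2\det\tau,
\]
where $\sigma_0=\begin{pmatrix}a_0&b_0\\c_0&d_0\end{pmatrix}$ is any lift of $\bar\sigma$ and $\tau=\begin{pmatrix}a&b\\c&d\end{pmatrix}\in\Mat_2(\Z/\ell^{r+s-1}\Z)$ parametrizes lifts to $\Mat_2(\Z/\ell^{r+s}\Z)$ via $\sigma=\sigma_0+\ell\tau$. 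This identity separates the determinant into a constant term, an adjugate-pairing linear term, and a quadratic correction of $\ell$-adic order at least $2$.

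For the first formula I will use that $f(0,s)$ counts $\SL_2(\Z/\ell^s\Z)$: for $s\ge 1$ the standard order formula gives $\#\SL_2(\Z/\ell^s\Z)=\ell^{3s-2}(\ell^2-1)$, and the case $s=0$ reduces to $f(0,0)=1=\ell^{-2}(\ell^2-1)+\ell^{-2}$. For the second formula at $s=0$, $f(1,0)$ counts singular matrices in $\Mat_2(\F_\ell)$, giving $\ell^4-\#\GL_2(\F_\ell)=\ell^4-\ell(\ell-1)(\ell^2-1)=(\ell+1)(\ell^2-1)+1$.

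For the remaining cases ($r\ge 1$) the condition $\det\sigma\equiv\ell^r\pmod{\ell^{r+s}}$ forces $\det\bar\sigma=0$, so $\bar\sigma$ has rank $0$ or $1$. In the rank-$0$ stratum, the bijection $\tau\mapsto\ell\tau$ from $\Mat_2(\Z/\ell^{r+s-1}\Z)$ onto matrices with $\bar\sigma=0$ converts the determinant condition to $\det\tau\equiv\ell^{r-2}\pmod{\ell^{r+s-2}}$; for $r\ge 2$ this contributes $\ell^4 f(r-2,s)$ (the factor $\ell^4$ being the size of the fiber of the mod-$\ell^{r+s-2}$ reduction), while for $r=1,s\ge 1$ the valuation of $\ell^2\det\tau$ is at least $2$ and the stratum is empty. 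In the rank-$1$ stratum there are $(\ell+1)(\ell^2-1)$ choices for $\bar\sigma$; for any chosen lift $\sigma_0$, writing $\det\sigma_0=\ell t_0$ transforms the determinant condition via the central identity into
\[
t_0+(a_0d+d_0a-b_0c-c_0b)+\ell\det\tau\equiv\ell^{r-1}\pmod{\ell^{r+s-1}}.
\]
Since at least one of $a_0,b_0,c_0,d_0$ is a unit modulo $\ell$, the corresponding variable has a coefficient that is a unit in $\Z/\ell^{r+s-1}\Z$ (the quadratic term $\ell\det\tau$ perturbs it only by an $\ell$-multiple), so the congruence solves uniquely for that variable, producing $\ell^{3(r+s-1)}$ solutions $\tau$ and hence $(\ell+1)(\ell^2-1)\ell^{3(r+s-1)}$ matrices. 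Summing the two strata yields both the recurrence $f(r,s)=\ell^{3(r+s-1)}(\ell+1)(\ell^2-1)+\ell^4f(r-2,s)$ for $r\ge 2$ and the formula $f(1,s)=\ell^{3s}(\ell+1)(\ell^2-1)$ for $s\ge 1$, matching the claim (with $\delta(s)=0$).

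The main technical point will be verifying that the rank-$1$ count $\ell^{3(r+s-1)}$ is genuinely independent of which coordinate of $\bar\sigma$ happens to be a unit. The four cases are symmetric, but one should check a representative: if $d_0$ is a unit modulo $\ell$, then the coefficient of $a$ in the displayed congruence is $d_0+\ell d$, which is a unit in $\Z/\ell^{r+s-1}\Z$ for every value of $d$, so the congruence indeed solves uniquely for $a$ in terms of the remaining three free variables $b,c,d$. Invoking this case and the analogous ones for $a_0$, $b_0$, or $c_0$ being a unit completes the counting argument uniformly.
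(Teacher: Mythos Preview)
Your proof is correct and follows essentially the same approach as the paper: both stratify $F(r,s)$ according to whether the mod-$\ell$ reduction $\bar\sigma$ is zero or nonzero (equivalently, rank $0$ versus rank $1$), use the same determinant expansion of $\sigma_0+\ell\tau$, and handle the two strata identically---the nonzero stratum via the unit-coefficient argument giving $\ell^{3(r+s-1)}$ lifts per $\bar\sigma$, and the zero stratum via $\sigma=\ell\tau$ reducing to $f(r-2,s)$. Your treatment of the ``solve for one variable'' step is in fact slightly more explicit than the paper's, since you note that the quadratic term $\ell\det\tau$ only perturbs the relevant coefficient by a multiple of $\ell$, leaving it a unit.
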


\begin{proof}
By convention we have $f(0,0)=1$.
For $s\ge 1$, we have the well-known formula
\begin{equation*}
f(0,s)=\#\SL_2(\Z/\ell^s\Z)=\ell^{3s-2}(\ell^2-1).
\end{equation*}
This proves the first formula given in the statement of the lemma.

Now assume that $r\ge 1$.  If $r=1$ and $s=0$, then we have
\begin{equation*}
f(1,0)=\#\Mat_2(\Z/\ell\Z)-\#\GL_2(\Z/\ell\Z)=\ell^3+\ell^2-\ell.
\end{equation*}
We observe that any $\sigma\in F(r,s)$ must reduce modulo $\ell$ to some $\sigma_0\in F(1,0)$.
Thus, we assume that $\sigma_0\in F(1,0)$, and we write
\begin{equation*}
\sigma_0=\begin{pmatrix}a_0&b_0\\ c_0&d_0\end{pmatrix}\quad\text{and}\quad
\sigma=\begin{pmatrix}a_0+a\ell&b_0+b\ell\\ c_0+c\ell&d_0+d\ell\end{pmatrix},
\end{equation*}
with $0\le a_0, b_0, c_0, d_0<\ell$ and $0\le a,b,c,d<\ell^{r+s-1}$.
By definition, we see that $\sigma\in F(r,s)$ if and only if
\begin{equation*}
a_0d_0-b_0c_0+(d_0a-c_0b-b_0c+a_0d)\ell+(ad-bc)\ell^2\equiv\ell^r\pmod{\ell^{r+s}}.
\end{equation*}
If $\sigma_0$ is not the zero matrix modulo $\ell$, then there are exactly $\ell^{3(r+s-1)}$ choices of $(a,b,c,d)$ satisfying the above congruence.
On the other hand, if $\sigma_0$ is the zero matrix (which is always an element of $F(1,0)$), the above congruence condition reduces to
\begin{equation}\label{zero matrix cond}
(ad-bc)\ell^2\equiv \ell^r\pmod{\ell^{r+s}}.
\end{equation}
If $r=1$, then there can be no solutions to~\eqref{zero matrix cond} with $s\ge 1$.
Therefore,
\begin{equation*}
f(1,s)=\ell^{3s}(f(1,0)-1)=\ell^{3s}(\ell^3+\ell^2-\ell-1)
=\ell^{3s}(\ell+1)(\ell^2-1)
\end{equation*}
when $s\ge 1$, and this completes the proof of the second formula stated in the lemma.
On the other hand, if $r\ge 2$, then condition~\eqref{zero matrix cond} reduces to
\begin{equation*}
(ad-bc)\equiv\ell^{r-2}\pmod{\ell^{r-2+s}}.
\end{equation*}
There are $\ell^4f(r-2,s)$ solutions to this congruence with $0 \leq a,b,c,d < \ell^{r+s-1}$.
Whence
\begin{equation*}
\begin{split}
f(r,s)&=\ell^{3(r+s-1)}(f(1,0)-1)+\ell^4f(r-2,s)\\
&=\ell^{3(r+s-1)}(\ell+1)(\ell^2-1)+\ell^4f(r-2,s)
\end{split}
\end{equation*}
for $r\ge 2$, and this completes the proof of the lemma.
\end{proof}

%%%%%%%%%%%%%%%%%%%%%%%%%%%%%%%%%%%%%%%%%%%%%%%%%%%%%%%%%%%%

\begin{prop}\label{ell divides N but not n}
If $v=\nu_\ell(N)\ge 1$ and $\ell\nmid n$, then
\begin{equation*}
\#C_{N,n}(\ell^e)
=\ell^{3e-v-2}(\ell+1)\left(\ell^{v+1}-\ell^v-1\right)
\end{equation*}
for every $e>v$.
\end{prop}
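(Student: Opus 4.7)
The plan is to reduce the count to an application of Proposition \ref{det prop} (the determinant-count formula) by making the substitution $\tau=\sigma-I$. Observe that the identity $\det(\sigma-I)=\det(\sigma)-\tr(\sigma)+1$ turns the condition $\det(\sigma)+1-\tr(\sigma)\equiv N\pmod{\ell^e}$ into $\det(\tau)\equiv N\pmod{\ell^e}$, while the unit-determinant condition $\det(\sigma)\not\equiv0\pmod{\ell}$ becomes $\det(\tau+I)\not\equiv0\pmod{\ell}$. Since $\ell\nmid n$, the congruence $\sigma\equiv I\pmod{\ell^{\nu_\ell(n)}}$ is vacuous, so
\[
\#C_{N,n}(\ell^e)=A-B,
\]
where $A$ counts $\tau\in\Mat_2(\Z/\ell^e\Z)$ with $\det(\tau)\equiv N\pmod{\ell^e}$, and $B$ counts those same $\tau$ with the extra condition $\det(\tau+I)\equiv0\pmod{\ell}$.

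The quantity $A$ is handed to us by Proposition \ref{det prop}: with $r=v$ and $s=e-v\ge1$, the $\delta(s)$ term vanishes, yielding $A=\ell^{3e-v-2}(\ell+1)(\ell^{v+1}-1)$. So the entire proposition reduces to proving that $B=\ell^{3e-2}(\ell+1)$, because then
\[
\#C_{N,n}(\ell^e)=\ell^{3e-v-2}(\ell+1)\bigl(\ell^{v+1}-1-\ell^v\bigr)=\ell^{3e-v-2}(\ell+1)\bigl(\ell^{v+1}-\ell^v-1\bigr),
\]
as required.

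To compute $B$, I will reduce modulo $\ell$ and count in two stages. First, any $\tau$ counted by $B$ satisfies (mod $\ell$) both $\det\tau_0=0$ and $\det(\tau_0+I)=0$, which forces the characteristic polynomial of $\tau_0\in\Mat_2(\F_\ell)$ to be $x(x+1)$; in particular $\tau_0$ is semisimple with distinct eigenvalues $0$ and $-1$, and hence lies in a single $\GL_2(\F_\ell)$-conjugacy class whose centraliser is the diagonal torus. A standard orbit-stabiliser computation then gives exactly $|\GL_2(\F_\ell)|/(\ell-1)^2=\ell(\ell+1)$ such matrices $\tau_0$. Second, for each fixed lift $\tau_0$ I write $\tau=\tau_0+\ell M$ with $M\in\Mat_2(\Z/\ell^{e-1}\Z)$ and use the $2\times2$ determinant expansion
\[
\det(\tau_0+\ell M)=\det(\tau_0)+\ell\tr\bigl(\mathrm{adj}(\tau_0)M\bigr)+\ell^2\det M
\]
to rewrite the congruence $\det\tau\equiv N\pmod{\ell^e}$, after dividing through by $\ell$, as a congruence mod $\ell^{e-1}$. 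Because $\tau_0$ has rank $1$ mod $\ell$, its classical adjoint $\mathrm{adj}(\tau_0)$ also has rank $1$ (and nonzero) mod $\ell$, so the linear form $M\mapsto\tr(\mathrm{adj}(\tau_0)M)$ has a unit coefficient for at least one entry of $M$. Solving for that entry in terms of the other three (noting that the additional $\ell\det M$ contribution only shifts the coefficient by a multiple of $\ell$, leaving it a unit) shows that exactly $\ell^{3(e-1)}$ matrices $M$ work. Multiplying the $\ell(\ell+1)$ choices of $\tau_0$ by the $\ell^{3(e-1)}$ lifts yields $B=\ell^{3e-2}(\ell+1)$, completing the proof.

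The main obstacle is the second stage: one has to verify that for every admissible reduction $\tau_0$, the Hensel-type lifting produces a uniform count of $\ell^{3(e-1)}$ lifts, independent of $e$ and $v$. This hinges on the observation that $\mathrm{adj}(\tau_0)$ is nonzero modulo $\ell$ (which is exactly what the eigenvalue analysis guarantees), so that the defining congruence for the lifts is, up to a change of basis, affine-linear with a unit leading coefficient; the quadratic term $\ell^2\det M$ is harmless after dividing by $\ell$. The rest is bookkeeping, assembling $A$, $B$, and the algebraic identity $\ell^{v+1}-1-\ell^v=\ell^{v+1}-\ell^v-1$.
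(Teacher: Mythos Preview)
Your proof is correct and takes a genuinely different route from the paper's.

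The paper proceeds by lifting from level $1$: it uses Lemma~\ref{matrix count mod ell} to get $\#C_{N,n}(\ell)=\ell(\ell^2-2)$, then for each $\sigma_0\in C_{N,n}(\ell)$ counts lifts to $C_{N,n}(\ell^e)$. Non-identity reductions yield $\ell^{3(e-1)}$ lifts each by the same linear-form argument you use; the identity reduction is handled separately via Proposition~\ref{det prop}, and this forces a case split according to whether $v=1$ or $v\ge 2$.

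Your substitution $\tau=\sigma-I$ converts the defining congruence into $\det\tau\equiv N\pmod{\ell^e}$, so Proposition~\ref{det prop} gives the \emph{main} term $A$ directly and uniformly in $v$. The correction $B$ (removing the non-invertible $\sigma$) is then handled by your conjugacy-class count of matrices with characteristic polynomial $x(x+1)$ over $\F_\ell$, followed by the Hensel-type lifting. This avoids the $v=1$ versus $v\ge 2$ dichotomy entirely and makes the role of Proposition~\ref{det prop} more transparent. The paper's version, on the other hand, is slightly more elementary in that it never invokes orbit--stabiliser reasoning, relying only on direct enumeration. Both arguments ultimately hinge on the same lifting principle: once the reduction mod $\ell$ is not the ``degenerate'' matrix (the identity for the paper, the zero matrix in your $\tau$-coordinates), the number of lifts is $\ell^{3(e-1)}$ because the relevant linear form has a unit coefficient.
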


\begin{proof}
By Lemma~\ref{matrix count mod ell}, we have
\begin{equation}\label{base case}
\#C_{N,n}(\ell)=\ell(\ell^2-2)=\ell^3-2\ell,
\end{equation}
and so we may assume that $e\ge 2$.
We proceed in a manner similar to the proof of Proposition~\ref{matrix proportions for ell not dividing N}.
In particular, we assume that $\sigma_0\in C_{N,n}(\ell)$ and count the number of $\sigma\in C_{N,n}(\ell^e)$ that reduce to $C_{N,n}(\ell)$.
Writing
\begin{equation*}
\sigma_0=\begin{pmatrix}a_0&b_0\\ c_0&d_0\end{pmatrix}\quad\text{and}\quad
\sigma=\begin{pmatrix}a_0+a\ell&b_0+b\ell\\ c_0+c\ell&d_0+d\ell\end{pmatrix}
\end{equation*}
with $0\le a_0,b_0,c_0,d_0<\ell$ and $0\le a,b,c,d<\ell^{e-1}$, we deduce that the quadruple $(a,b,c,d)$ must
satisfy~\eqref{lift det-tr cond}.
As in the proof of Proposition~\ref{matrix proportions for ell not dividing N}, if $\sigma_0$ is not the identity matrix,
there are exactly $\ell^{3(e-1)}$ choices for $(a,b,c,d)$.

Now suppose that $\sigma_0$ is the identity matrix.
(Note that the identity matrix is always an element of $C_{N,n}(\ell)$ when $\ell\mid N$.)
Then writing $N=\ell^vN'$ with $v=\nu_\ell(N)\ge 1$ and $(N',\ell)=1$,
we see that condition~\eqref{lift det-tr cond} reduces to
\begin{equation}\label{id case}
(ad-bc)\ell^2\equiv N'\ell^{v}\pmod{\ell^{e}}.
\end{equation}
Clearly, there are no solutions to this congruence unless $v\ge 2$.
Therefore, if $v=1$ and $e\ge 2$, we have that
\begin{equation*}
\#C_{N,n}(\ell^e)
=\ell^{3(e-1)}(\ell^3-2\ell-1)
=\ell^{3e-3}(\ell+1)(\ell^2-\ell-1).
\end{equation*}
Now, suppose that $v\ge 2$ and $e\ge 3$.  Then~\eqref{id case} reduces to
\begin{equation}
(ad-bc)\equiv N'\ell^{v-2}\pmod{\ell^{e-2}}.
\end{equation}
The number of solutions to this congruence with $0\le a,b,c,d<\ell^{e-1}$ is equal to
\begin{equation*}
\ell^4\#\{\alpha\in\Mat_2(\Z/\ell^{e-2}\Z): \det(\alpha)\equiv N'\ell^{v-2}\pmod{\ell^{e-2}}\}.
\end{equation*}
Since we are assuming that $v<e$, Proposition~\ref{det prop} implies that the above count is equal to
\begin{equation*}
\ell^4\ell^{2(v-3)}\ell^{3(e-v)}(\ell+1)(\ell^{v-1}-1)
=\ell^{3e-v-2}(\ell+1)(\ell^{v-1}-1).
\end{equation*}
Putting everything together, we find that
\begin{equation*}
\begin{split}
\#C_{N,n}(\ell^e)
&=\ell^{3(e-1)}(\ell^3-2\ell-1)+\ell^{3e-v-2}(\ell+1)(\ell^{v-1}-1)\\
&=\ell^{3e-v-2}(\ell+1)\left(\ell^{v+1}-\ell^v-1\right)
\end{split}
\end{equation*}
for $v\ge 2$.
\end{proof}

%%%%%%%%%%%%%%%%%%%%%%%%%%%%%%%%%%%%%%%%%%%%%%%%%%%%%%%%%%%%

Recall our standing assumption that $n^2\mid N$.
\begin{thm}\label{complete matrix count thm}
Let $u=\nu_\ell(n)$ and $v=\nu_\ell(N)$.
Then for every $e>v$, we have
\begin{equation*}
\#C_{N,n}(\ell^e)
=\begin{cases}
\ell^{3(e-1)+1}\left(\ell^2-\ell-1-\leg{N-1}{\ell}^2\right)&\text{if }u=0\text{ and }v= 0,\\
\ell^{3e-v-2}(\ell+1)\left(\ell^{v+1}-\ell^v-1\right)&\text{if }u=0\text{ and }v\ge 1,\\
\ell^{3e-v-2}(\ell+1)(\ell^{v-2u+1}-1)&\text{if }1\le u\le v/2,\\
0&\text{if } 0\le v/2<u.
\end{cases}
\end{equation*}
Therefore, for every $e>v$, we have
\begin{equation*}
\begin{split}
\frac{\ell^e\#C_{N,n}(\ell^e)}{\#\GL_2(\Z/\ell^e\Z)}
&=\begin{cases}
\displaystyle
\left(1-\frac{\leg{N-1}{\ell}^2\ell+1}{(\ell-1)^2(\ell+1)}\right)&\text{if }u=0\text{ and }v= 0,\\
\displaystyle
\frac{\ell}{\ell-1}\left(1-\frac{1}{\ell^v(\ell-1)}\right)&\text{if }u=0\text{ and }v\ge 1,\\
\displaystyle
\frac{\ell}{\ell^{2u}(\ell-1)}\left(\frac{\ell^{v+1}-\ell^{2u}}{\ell^{v+1}-\ell^v-1}\right)\left(1-\frac{1}{\ell^v(\ell-1)}\right)&\text{if }1\le u\le v/2,\\
0&\text{if }0\le v/2<u.
\end{cases}
\end{split}
\end{equation*}
\end{thm}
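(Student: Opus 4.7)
The proof naturally splits by the value of $u=\nu_\ell(n)$. The two $u=0$ cases are exactly what Propositions~\ref{matrix proportions for ell not dividing N} and~\ref{ell divides N but not n} deliver, since in this situation the congruence $\sigma\equiv I\pmod{\ell^{\nu_\ell(n)}}$ is vacuous and $C_{N,n}(\ell^e)$ coincides with the sets counted there. So the real work is to handle $u\ge1$, and the plan is to reduce everything to Proposition~\ref{det prop} by a change of variables.

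For $u\ge1$, I would parametrize $\sigma\in C_{N,n}(\ell^e)$ by writing $\sigma=I+\ell^u\tau$ with $\tau\in \Mat_2(\Z/\ell^{e-u}\Z)$. A direct expansion gives
\[
\det(\sigma)+1-\tr(\sigma)=\bigl(1+\ell^u a\bigr)\bigl(1+\ell^u d\bigr)-\ell^{2u}bc+1-\bigl(2+\ell^u(a+d)\bigr)=\ell^{2u}\det(\tau),
\]
where $\tau=\begin{pmatrix}a&b\\c&d\end{pmatrix}$. Since $\det(\sigma)\equiv1\pmod\ell$, the invertibility of $\sigma$ is automatic, so $C_{N,n}(\ell^e)$ is in bijection with the set $T:=\{\tau\in\Mat_2(\Z/\ell^{e-u}\Z):\ell^{2u}\det(\tau)\equiv N\pmod{\ell^e}\}$.

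If $2u>v$, the left-hand side has $\ell$-adic valuation at least $2u>v$, while $N$ has valuation exactly $v$, so $T=\varnothing$ and $\#C_{N,n}(\ell^e)=0$, matching the fourth case of the theorem. If $1\le u\le v/2$, the congruence rewrites as $\det(\tau)\equiv M\pmod{\ell^{e-2u}}$ where $M=N/\ell^{2u}$ has $\ell$-adic valuation $r:=v-2u\ge0$. The condition only depends on $\tau\bmod \ell^{e-2u}$, and the reduction $\Mat_2(\Z/\ell^{e-u}\Z)\to \Mat_2(\Z/\ell^{e-2u}\Z)$ has fibres of size $\ell^{4u}$, so
\[
\#T=\ell^{4u}\cdot\#\{\tau_0\in\Mat_2(\Z/\ell^{e-2u}\Z):\det(\tau_0)\equiv M\pmod{\ell^{e-2u}}\}.
\]
Now I apply Proposition~\ref{det prop} with $r=v-2u$ and $s=e-v\ge1$ (so $\delta(s)=0$). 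After the algebra $\ell^{4u}\cdot\ell^{2(r-1)}\ell^{3s}(\ell+1)(\ell^{r+1}-1)=\ell^{3e-v-2}(\ell+1)(\ell^{v-2u+1}-1)$, which is exactly the third case. The edge case $r=0$ (i.e.\ $u=v/2$) is handled uniformly because $\ell^{v-2u+1}-1=\ell-1$ and Proposition~\ref{det prop} gives $\ell^{3s-2}(\ell^2-1)$, leading to the same final expression.

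For the probability formulas, I will just divide by $|\GL_2(\Z/\ell^e\Z)|=\ell^{4e-3}(\ell-1)^2(\ell+1)$, multiply by $\ell^e$, and simplify. The $u=0,v=0$ identity follows from $\ell^3-\ell^2-\ell-\leg{N-1}{\ell}^2\ell=\ell\bigl((\ell-1)^2(\ell+1)-\leg{N-1}{\ell}^2\ell-1\bigr)$; the $u=0,v\ge1$ identity follows from $\ell^{v+1}-\ell^v-1=\ell^v(\ell-1)\bigl(1-\tfrac1{\ell^v(\ell-1)}\bigr)$; and the $1\le u\le v/2$ identity is obtained by multiplying the $v\ge1$ simplification by $(\ell^{v+1}-\ell^{2u})/(\ell^{v+1}-\ell^v-1)\cdot\ell^{1-2u}/(\ell-1)^{-1}$, which is the content of the stated factorization.

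The only genuinely delicate point is the bookkeeping in the $u\ge1$ case: keeping track of the modulus on which $\tau$ lives ($\ell^{e-u}$), the modulus on which the determinant condition is actually imposed ($\ell^{e-2u}$), and matching the exponents of $\ell$ when combining the lifting factor $\ell^{4u}$ with the output of Proposition~\ref{det prop}. Once this accounting is done carefully, the rest is algebraic simplification.
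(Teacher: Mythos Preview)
Your proposal is correct and follows essentially the same approach as the paper: the $u=0$ cases are dispatched by the two preceding propositions, and for $u\ge1$ you parametrize $\sigma=I+\ell^u\tau$, reduce the condition to a determinant congruence on $\tau$, and invoke Proposition~\ref{det prop} with $r=v-2u$ and $s=e-v$, exactly as the paper does. There is a small slip in your displayed identity for the $u=0,v=0$ ratio (an extra factor of $\ell$ on the right-hand side), but the intended computation is the correct one and the paper itself leaves these algebraic simplifications to the reader.
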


\begin{proof}
Note that the second assertion of theorem follows from the first together with the well-known formula
\begin{equation*}
\#\GL_2(\Z/\ell^e\Z)=\ell^{4(e-1)+1}(\ell+1)(\ell-1)^2,
\end{equation*}
and so it suffices to prove the first assertion of the theorem.

The first two cases have already been addressed by Propositions~\ref{matrix proportions for ell not dividing N} and~\ref{ell divides N but not n}.
Therefore, we may assume that $u\ge 1$.
Supposing that $\sigma\in C_{N,n}(\ell^e)$, we may write
\begin{equation*}
\sigma=\begin{pmatrix}1+a\ell^u&b\ell^u\\ c\ell^u&1+d\ell^u\end{pmatrix}
\end{equation*}
with $0\le a,b,c,d<\ell^{e-u}$ chosen such that
\begin{equation*}
(ad-bc)\ell^{2u}\equiv N'\ell^v\pmod{\ell^e}.
\end{equation*}
This congruence clearly has no solutions if $e>v$ and $2u>v$.
Therefore, we may assume that $2\le 2u\le v<e$.
In this case the above congruence is equivalent to the condition
\begin{equation*}
(ad-bc)\equiv N'\ell^{v-2u}\pmod{\ell^{e-2u}}
\end{equation*}
for $0 \leq a,b,c,d < \ell^{e-u}$.
Applying Proposition~\ref{det prop} with $r=v-2u$ and $s=e-v>0$, we find that
\begin{equation*}
\begin{split}
\#C_{N,n}(\ell^e)
&=\ell^{4u}\ell^{2(v-2u-1)}\ell^{3(e-v)}(\ell+1)(\ell^{v-2u+1}-1)\\
&=\ell^{3e-v-2}(\ell+1)(\ell^{v-2u+1}-1).
\end{split}
\end{equation*}
\end{proof}

%%%%%%%%%%%%%%%%%%%%%%%%%%%%%%%%%%%%%%%%%%%%%%%%%%%%%%%%%%%%

We are now ready to give the proofs of Theorems~\ref{K(N) interpretation} and~\ref{K(G) interpretation}.

\begin{proof}[Proof of Theorems~\ref{K(N) interpretation} and~\ref{K(G) interpretation}]
Theorem~\ref{K(N) interpretation} follows easily from~\eqref{define K(N)} and the cases of Theorem~\ref{complete matrix count thm} with $\nu_\ell(n)=u=0$.
For the proof of Theorem~\ref{K(G) interpretation}, we let $N=m^2k=|G|$, and for each prime $\ell$, we put
\begin{equation*}
v_\ell(N,n):=\frac{\ell^e\#C_{N,n}(\ell^e)}{\#\GL_2(\Z/\ell^e\Z)}
\end{equation*}
with $e=e_\ell>\nu_\ell(N)$.
We then compute the absolutely convergent infinite product
\begin{equation*}
\prod_\ell\left(v_\ell(N,m)-v_\ell(N,\ell m)\right)
\end{equation*}
in two different ways.
On the one hand, by definition of the $v_\ell(N,n)$, the above expression is equal to
\begin{equation*}
\prod_\ell\left(\frac{\ell^e\cdot\#\left\{\sigma\in\GL_2(\Z/\ell^e\Z) :
	\begin{array}{l}\det(\sigma)+1-\tr(\sigma)\equiv N\pmod{\ell^e},\\
	\sigma\equiv I\pmod{\ell^{\nu_\ell(m)}},\\
	\sigma\not\equiv I\pmod{\ell^{\nu_\ell(m)+1}}
	\end{array}\right\}}
	{\#\GL_2(\Z/\ell^e\Z)}\right).
\end{equation*}
On the other hand, by comparing~\eqref{define K(G)} and Lemma~\ref{Aut(G)} with Theorem~\ref{complete matrix count thm},
we see that it is equal to $K(G)\frac{|G|}{|\Aut(G)|}$.
\end{proof}

\def\cprime{$'$}
\bibliographystyle{alpha}

%\bibliographystyle{alpha}
%\bibliography{references}

\end{document}